\numberwithin{equation}{section}
\newtheorem{theorem}{Theorem}[section]
\newtheorem{proposition}[theorem]{Proposition}
\newtheorem{lemma}[theorem]{Lemma}
\newtheorem{corollary}[theorem]{Corollary}
\newtheorem{definition}[theorem]{Definition}
\newtheorem{example}[theorem]{Example}
\newtheorem{question}{Question}
\newcommand{\aA}{{\mathcal A}}
\newcommand{\rR}{{\mathcal R}}
\newcommand{\sS}{\mbox{Shi}}
\newcommand{\lp}[2]{{\mathcal{L}^{#1}_{#2}}}
\newcommand{\dyp}[2]{{\mathcal{D}^{#1}_{#2}}}
\newcommand{\mincos}[2]{{\mathcal{M}_+({\widetilde{#1}_{#2}})}}
\newcommand{\al}[2]{\mathcal{A}(#1_{#2})}
\newcommand{\tc}{\textcolor}
\newcommand{\bj}{${\sf Bj}_1$}
\newcommand{\bjj}{${\sf Bj}_2$}
\newcommand{\fs}{\footnotesize}
\newcommand{\mshi}[1]{${\sf Shi}^m (#1)$}
\newcommand{\mnshi}[2]{{\sf Shi}^{#1}(#2)}
\newcommand{\shi}[1]{${\sf Shi}(#1)$}
\newcommand{\mcatn}[2]{{\sf N}^{#1}(#2)}
\newcommand{\ip}[2]{\left\langle #1,#2 \right\rangle}
\newcommand{\alc}{{\mathcal A}}
\newcommand{\bracket}{abacus }
\definecolor{mycyan}{rgb}{0.4,0.8,1.0}
\definecolor{mycyan2}{rgb}{0.0,0.48,0.65}
\definecolor{Green}{rgb}{0.2,0.7,0.2}
\newcommand{\pa}{{\mathcal P}}
\newcommand{\mgk}[2]{ \mathscr{K}_{#2}(#1)}
\newcommand{\w}[1]{w_{\scalebox{0.6}{$[#1]$}}}
\newcommand{\mk}[1]{{\color{mycyan2}#1}}
\newcommand{\hide}[1]{}
\newcommand{\pr}[1]{{\emph{pr}_{#1}}}
\newcommand{\fl}[1]{\left\lfloor #1 \right\rfloor}
\newcommand*\circled[1]{\tikz[baseline=(char.base)]{
    \node[shape=circle,draw,inner sep=1pt] (char) {#1};}}
\title{Bijections between generalized Catalan families \\of types
  $A$ and $C$}
\author{Myrto Kallipoliti\thanks{
Fak. f\"ur Mathematik, Universit\"at Wien, Oskar-Morgenstern-Platz 1, 1090
Wien, Austria,
\texttt{myrto.kallipoliti@univie.ac.at}
    }
  \, and Eleni Tzanaki\thanks{
    Department of Mathematics \& Applied Mathematics,
     University of Crete,
    GR-700 13 Voutes, Heraklion, Greece,
     \texttt{etzanaki@uoc.gr}
    }
}
\begin{document}
\maketitle


\begin{abstract}
  Motivated by the relation $\mcatn{m}{C_n}=(mn+1)\mcatn{m}{A_{n-1}}$,
  holding for the $m$-generalized Catalan numbers of type $A$ and $C$,
  the connection between  dominant regions of the
  $m$-Shi arrangement of type $A_{n-1}$ and $C_n$ is investigated.
     More precisely, it is explicitly shown  how 
    $mn+1$ copies of each element of the set $\rR_+^m(A_{n-1})$ of
    dominant regions of the $m$-Shi arrangement of type $A_{n-1}$, biject onto 
    the set  $\rR_+^m(C_{n})$ of type $C_{n}$ such regions.
   This is achieved by exploiting two different viewpoints to express
  the representative alcove of each region: the Shi tableau and the abacus
  diagram. In the same line of thought, a bijection between $mn+1$
  copies of each $m$-Dyck path of height $n$ and the set of 
   $N-E$ lattice paths inside an  $n\times mn$ rectangle is provided.
\end{abstract}

\section{Introduction}
\label{intro}

The classical Catalan numbers, $\mbox{Cat}(n)=\frac{1}{n+1}{2n\choose n}$,
constitute one of the most ubiquitous number sequences in
enumerative combinatorics, also appearing in several other contexts
varying from  algebra and representation theory
\cite{fz-ysga-03,fr-gcccc-05,sh-nost-97,sh-acawg-97} to discrete geometry
\cite{ath-gcn-04,cfz-prga-02,po-pab-05}.
In combinatorics only, we refer to \cite[Exesrcise 6.19]{st-vol2} for a list of
66 families of objects enumerated by $\mbox{Cat}(n)$.
Such objects (also called \emph{Catalan objects})
relevant to the present paper are, the
set of dominant regions of the Shi arrangement $\sS(A_{n-1})$,
 triangulations of a convex $(n+3)$-gon, and Dyck paths of length $n$.
In \cite{ath-gcn-04}, Athanasiadis generalized Catalan numbers for every
crystallographic root system  $\Phi$ and positive integer $m$.
More precisely,  he defined the {\it $m$-generalized Catalan} number of type
$\Phi$ as
\begin{equation}
\label{eq:genCat}
\mcatn{m}{\Phi}=\prod_{i=1}^n \frac{e_i+mh+1}{e_i+1},
\end{equation}
where  $n$ is the rank, $h$ is the Coxeter number and $e_i$ are the exponents
of $\Phi$. In particular, he showed that $\mcatn{m}{\Phi}$ counts the number
of \emph{dominant regions} of the \emph{$m$-Shi arrangement} associated to
$\Phi$ (see Section~\ref{sec:prel} for the undefined notions).
We note that the classical Catalan numbers $\mbox{Cat}(n)$ are indeed a special 
case of \eqref{eq:genCat}, since they
occur when $\Phi=A_{n-1}$ and $m=1$.
More generally, if $\Phi=A_{n-1}$ or $C_n$, and $m$ is an arbitrary positive
integer, the expression in \eqref{eq:genCat} reduces respectively to
\[\mcatn{m}{A_{n-1}}=\tfrac{1}{mn+1}\tbinom{(m+1)n}{n} \ \ \ \ \ \ \ \
\mbox{and}
\ \ \ \ \ \ \ \ \ \
\mcatn{m}{C_{n}}=\tbinom{(m+1)n}{n}.\]
The numbers $\mcatn{m}{A_{n-1}}$, also known as \emph{Fuss-Catalan numbers},
count a wealth of combinatorial objects, most of which can be seen as
$m$-generalizations of \emph{type $A$} Catalan objects.
For the families discussed above i.e., dominant regions of Shi arrangemets, 
polygon triangulations and Dyck paths, such $m$-generalizations have been an 
object of research for more than a decade. In the same spirit,
 generalized $(m,\Phi)$-Catalan objects  have been discovered,
 for every finite root system $\Phi$ and integer $m$
 (see for instance \cite{arm-phd-05,ath-gcn-04,fr-gcccc-05}).
In most cases, each root system  $\Phi$ was
studied separately, before a unified structure was discovered.
Almost always, the starting point  was  the relation
\begin{align}
\label{catAC}
\mcatn{m}{A_{n-1}}(mn+1)=\mcatn{m}{C_{n}},
\end{align}
holding between $m$-Catalan numbers of type $A$ and $C$.
Occasionally, an  $(m,C_n)$-Catalan object
is a type $A$ one, of certain size and symmetry,
where  $mn+1$  copies of the $(m,A_{n-1})$-Catalan object reside.
Although most of the times the symmetry is rather natural to guess or 
understand, locating the $mn+1$ copies of the $(m,A_{n-1})$-Catalan object
in the corresponding $(m,C_n)$-type, can vary from easy to very complicated.

To give a motivating example, we describe a class of  $(m,\Phi)$-Catalan objects
where Relation \eqref{catAC} arises trivially.
Consider the set ${\mathscr D}_{n}^m$ of $(m+2)$-angulations of a convex
$(mn+2)$-gon $P$ i.e., dissections of $P$ by noncrossing diagonals into
polygons each having $m+2$ vertices.  Let also ${\mathscr{C}}_{2n}^m$ be the 
subset of ${\mathscr D}_{2n}^m$  consisting  of centrally  symmetric  
$(m+2)$-angulations  of a $(2mn+2)$-gon. The sets ${\mathscr D}^m_n$ and 
${\mathscr C}^m_{2n}$ are combinatorial realizations of the facets of the
generalized cluster complex $\Delta^m(\Phi)$ for $\Phi=A_{n-1}$ and $C_n$
respectively \cite{fr-gcccc-05}.
From their  description, Relation \eqref{catAC} is evident:
\begin{figure}[h!]
  \centering
\begin{tikzpicture}[scale=0.5]

\begin{scope}[xshift=-4cm]
\path (10:2 cm) coordinate (A0); \path (30:2 cm) coordinate  (A1);
\path (50:2 cm) coordinate  (A2); \path (70:2 cm) coordinate  (A3);
\path (90:2 cm) coordinate  (A4); \path (110:2 cm) coordinate  (A5);
\path (130:2 cm) coordinate  (A6); \path (150:2 cm) coordinate  (A7);
\path (170:2 cm) coordinate  (A8); \path (190:2 cm) coordinate  (A9);
\path (210:2 cm) coordinate  (A10); \path (230:2 cm) coordinate  (A11);
\path (250:2 cm) coordinate  (A12); \path (270:2 cm) coordinate  (A13);
\path (290:2 cm) coordinate  (A14); \path (310:2 cm) coordinate  (A15);
\path (330:2 cm) coordinate  (A16); \path (350:2 cm) coordinate  (A17);

\draw (A0)--(A1)--(A2)--(A3)--(A4)--(A5)--(A6)--(A7)--(A8)--(A9)--
(A10)--(A11)--(A12)--(A13)--(A14)--(A15)--(A16)--(A17)--(A0);

\node  at (A0){\tiny $\bullet$}; \node  at (A1){\tiny $\bullet$};
\node  at (A2){\tiny $\bullet$}; \node  at (A3){\tiny $\bullet$};
\node  at (A4){\tiny $\bullet$}; \node  at (A5){\tiny $\bullet$};
\node  at (A6){\tiny $\bullet$}; \node  at (A7){\tiny $\bullet$};
\node  at (A8){\tiny $\bullet$}; \node  at (A9){\tiny $\bullet$};
\node  at (A10){\tiny $\bullet$}; \node  at (A11){\tiny $\bullet$};
\node  at (A12){\tiny $\bullet$}; \node  at (A13){\tiny $\bullet$};
\node  at (A14){\tiny $\bullet$}; \node  at (A15){\tiny $\bullet$};
\node  at (A16){\tiny $\bullet$}; \node  at (A17){\tiny $\bullet$};

\draw[line width=1.5 pt] (A4)--(A13);\draw (A4)--(A15);\draw (A2)--(A15);
\draw (A2)--(A17);\draw (A6)--(A13);\draw (A6)--(A11);\draw (A8)--(A11);
\end{scope}

\begin{scope}[xshift=4cm]
\path (10:2 cm) coordinate (A0); \path (30:2 cm) coordinate  (A1);
\path (50:2 cm) coordinate  (A2); \path (70:2 cm) coordinate  (A3);
\path (90:2 cm) coordinate  (A4); \path (110:2 cm) coordinate  (A5);
\path (130:2 cm) coordinate  (A6); \path (150:2 cm) coordinate  (A7);
\path (170:2 cm) coordinate  (A8); \path (190:2 cm) coordinate  (A9);
\path (210:2 cm) coordinate  (A10); \path (230:2 cm) coordinate  (A11);
\path (250:2 cm) coordinate  (A12); \path (270:2 cm) coordinate  (A13);
\path (290:2 cm) coordinate  (A14); \path (310:2 cm) coordinate  (A15);
\path (330:2 cm) coordinate  (A16); \path (350:2 cm) coordinate  (A17);

\draw (A4)--(A5)--(A6)--(A7)--(A8)--(A9)--(A10)--(A11)--(A12)--(A13)--(A4);

\node  at (A4){\tiny $\bullet$}; \node  at (A5){\tiny $\bullet$};
\node  at (A6){\tiny $\bullet$}; \node  at (A7){\tiny $\bullet$};
\node  at (A8){\tiny $\bullet$}; \node  at (A9){\tiny $\bullet$};
\node  at (A10){\tiny $\bullet$}; \node  at (A11){\tiny $\bullet$};
\node  at (A12){\tiny $\bullet$}; \node  at (A13){\tiny $\bullet$};

\draw[line width=1.5 pt] (A4)--(A13);
\draw (A6)--(A13);\draw (A6)--(A11);\draw (A8)--(A11);
\end{scope}
\end{tikzpicture}
\end{figure}
we  identify each centrally symmetric dissection $D$ in ${\mathscr C_{2n}^m}$
with the pair  consisting of its diameter and one copy of the two 
$(m+2)$-angulations of the $(mn+2)$-gon into which the diameter divides the
initial $(2mn+2)$-gon. Since the diameters are $mn+1$, the cardinality of  
${\mathscr C}_{2n}^m$  is indeed $(mn+1)\mcatn{m}{A_{n-1}}=\mcatn{m}{C_{n}}$.
\medskip

The aim of this work is to reveal two new instances of the identity 
$\mcatn{m}{A_{n-1}}(mn+1)=\mcatn{m}{C_{n}}$, which  might lead to a better 
geometric understanding of the type $A$ and $C$ Shi arrangements. More 
precisely, we provide an \emph{explicit} bijection  between each of the 
following pair of sets:
\begin{enumerate}
  \item[(${\sf Bj}_1$)]
  the set containing   $mn+1$ copies of each dominant region of the $m$-Shi
  arrangement \mshi{A_{n-1}} and that of dominant regions in  \mshi{C_{n}}
  \item[(${\sf Bj}_2$)] the set containing   $mn+1$ copies of each $m$-Dyck 
  path of height $n$ and that of lattice paths from $(0,0)$ to $(n,mn)$
  in the grid $n\times{mn}$.
\end{enumerate}

We note that the second bijection is based on an idea in \cite{mo-lpc-79} while
the first, which is the main contribution of this work, relies on the two
different ways to view the representative alcove of a region in \mshi{\Phi}:
its \emph{Shi tableau} \cite{fkt-fgcc-13} and its \emph{ abacus diagram}
\cite{fi-vaz-10,ahj-rcosc-13}. Each of the bijections \bj,\bjj\, can stand on
its own and the sections presenting them (Section \ref{sec:dom reg} and
\ref{sec:latt} respectively) can be read independently. However, in the setting
of dominant regions in \mshi{A_n}, there exists previous work
\cite{fkt-fgcc-13} which reveals a connection between the two bijections.
Unifying previous and current results, we have the following commutative diagram:
\begin{center}
  \begin{tikzpicture}
  \matrix (m) [matrix of math nodes,row sep=3em,column sep=4em,minimum
  width=2em]
  {
    \substack{\mbox{\fs dominant }
    \\\mbox{\fs regions in }}\mbox{\mshi{A_{n-1}}}\times[mn+1] &
     \substack{\mbox{\fs dominant }
       \\\mbox{\fs regions in }}\mbox{\mshi{C_{n}}}\\
       \substack{
        \mbox{\fs $m$-Dyck paths}\\
        \mbox{\fs of height $n$}}
    \times [mn+1] &
     \substack{
     \mbox{\fs lattice paths from }\\
     \mbox{\fs  $(0,0)$ to $(n,mn)$}\\
     \mbox{ \fs in the grid $n\times{mn}$} }
    \\
    \substack{\mbox{\fs $(m+2)$-angulations}\\\mbox{\fs of an $(mn+2)$-gon
      }}\times[mn+1] &
      \substack{
        \mbox{\fs centrally symmetric}\\
        \mbox{\fs $(m+2)$-angulations}\\
        \mbox{\fs of an $(2mn+2)$-gon }}
       \\};
  \path[-stealth]
  (m-1-1) edge node [left] {\fs $\mbox{FKT}_1$}
  (m-2-1) edge node [above] {\fs ${\sf Bj}_1$} (m-1-2)
  (m-2-1.east|-m-2-2) edge node [above] {\fs ${\sf Bj}_2$} (m-2-2)
  (m-1-2) edge node [right] {$\exists$} (m-2-2)
  (m-2-1) edge node [left] {\fs $\mbox{FKT}_2$} (m-3-1)
  (m-3-1) edge node [above] {\fs ${\sf Bj}$} (m-3-2)
  (m-2-2) edge node [right] {$\exists$} (m-3-2);
  \end{tikzpicture}
\end{center}
where $\mbox{FKT}_1$ and $\mbox{FKT}_2$ are bijections given in
\cite{fkt-fgcc-13}.
\medskip

This paper is structured as follows. We end up this section by recalling
basic facts on root systems and  presenting the two families
of Catalan objects we are dealing with.
In Section~\ref{sec:dom reg} we introduce all necessary material and  build our
first bijection \bj. More precisely, in Section \ref{sec:mShi}
we include background on Shi arrangements, discuss the notion of
\emph{$m$-minimal alcoves} and explain their connection to dominant
regions in \mshi{\Phi}.
Subsequently, in Sections~\ref{sec:viewp} and \ref{sec:1line}, we describe
the two different viewpoints in which we can encode dominant alcoves:  Shi
tableaux and abacus diagrams. Section~\ref{sec:shivsbr} serves to clarify the
relation between them in the type A case and to show how this adjusts to the
type $C$ case. Section~\ref{m-min-criteria} provides criteria for $m$-minimality
in terms of the abacus representation. Finally, in Section~\ref{sec:proj}, we
construct the bijection \bj\ and present an  explicit example. We conclude with
Section \ref{sec:latt}, where we prove \bjj.


\subsection{Preliminaries}
\label{sec:prel}
\medskip
\paragraph{Root systems.}
Let $V$ be an $n$-dimensional Euclidean space with inner product
$\ip{\cdot}{\cdot}$. For each  non-zero $\alpha\in V,$ the  \emph{reflection} 
$r_{\alpha}$, is the linear map which sends $\alpha$ to its negative and fixes 
pointwise  the hyperplane $H_{\alpha}$ orthogonal to $\alpha$. A finite 
\emph{root system} $\Phi$ is a finite  collection of non-zero vectors in $V$, 
called roots, which span $V$ and satisfy  $\Phi\cap 
\mathbb{R}\alpha=\{\alpha,-\alpha\}$  and $r_{\alpha}\Phi=\Phi$ for all 
$\alpha\in \Phi$.  If, in addition, 
$2\tfrac{\ip{\alpha}{\beta}}{\ip{\alpha}{\alpha}}\in\mathbb{Z}$
for all $\alpha,\beta\in\Phi$,  then the root system  $\Phi$ is called {\em 
crystallographic}. Any hyperplane in $V$ not orthogonal to any root in $\Phi$, 
partitions $\Phi$ into two sets; the set $\Phi^+$ of {\em positive} and
$\Phi^-$ of {\em negative} roots, respectively.
The set $\Pi$ of \emph{simple roots} is a subset of $\Phi^+$
that spans $V$ and has the additional property that
each  positive root can be expressed as a linear combination of simple
roots  with non-negative  coefficients.
The \emph{rank} of $\Phi$ is the dimension of the
space generated by $\Phi^+$.

Since in this paper we deal with root systems of type $A$ and $C$,
we briefly describe their standard  choice of
positive $\Phi^+$ and simple $\Pi$ roots.
In what follows we denote by $\varepsilon_1,\ldots,\varepsilon_{n+1}$
 the standard basis of $\mathbb{R}^{n+1}$. For more information on root systems
 we refer the reader to \cite{hu-rgcg-90}.

For  $\Phi=A_n$, we have
$\Phi^+=\{\alpha_{ij}:=\varepsilon_i-\varepsilon_{j+1}\mid{}1\leq{}i\leq{}j\leq{}n\}$
and $\Pi=\{\alpha_i:=\varepsilon_i-\varepsilon_{i+1}\mid{}1\leq i\leq n\}$
respectively.
Then, each positive root can be written in terms of  simple roots
as:
\begin{align}
\label{typeAroots}
\alpha_{ij}=\alpha_i+\alpha_{i+1}+\cdots+\alpha_j,\ \mbox{for}\ 1\leq i\leq
j\leq n.
\end{align}

For $\Phi=C_n$, we have $\Phi^+=\{2\varepsilon_i, 2\varepsilon_n,
\varepsilon_i\pm\varepsilon_{j+1}\,|\,1\leq{}i\leq{}j\leq{}n-1\}$ and
$\Pi=\{\alpha_{i}:=\varepsilon_i-\varepsilon_{i+1},\alpha_n:=2\varepsilon_n\,|\,1\leq
i\leq n-1\}$ respectively. If, for $1\leq{i}\leq{j}\leq{n-1}$, we set 
$\alpha_{ij}:=\varepsilon_i-\varepsilon_{j+1}$, 
$\alpha_{in}:=\varepsilon_i+\varepsilon_n$ and 
$\overline\alpha_{ij}:=\varepsilon_i+\varepsilon_{j+1}$,
each positive root can be written  in terms of simple roots, as follows:
\begin{equation}
\begin{split}
\alpha_{ij}&=\alpha_i+\cdots+\alpha_j,\ \mbox{for}\ 1\leq i\leq j\leq n,\
\mbox{and}\\
\overline\alpha_{ij}&=\alpha_i+\cdots+\alpha_{j-1}+2\,
(\alpha_{j}+\cdots+\alpha_{n-1})+\alpha_n,
\end{split}
\label{typeCroots}
\end{equation}

where $\alpha_{ii}:=\alpha_{i}$ and
the sum $\alpha_i+\cdots+\alpha_{j-1}$ is empty for $i=j$.
\medskip

\paragraph{Generalized Catalan objects.}
\label{sec:cox obj}
We call \emph{generalized Catalan objects},
families of combinatorial objects which are counted by generalized
Catalan numbers. In this paragraph we present those
which are relevant to this paper:
the family of dominant regions of the $m$-Shi arrangement of
type $A$ and $C$, that of $m$-Dyck paths of height $n$, and
that of $N-E$ lattice paths inside an $n\times mn$ rectangle.
\smallskip

The {\em $m$-Shi arrangement} \mshi{\Phi}
associated to a crystallographic root system $\Phi$
and positive integer $m$,
is the collection of hyperplanes
$\{H_{\alpha,k} \mid \alpha \in \Phi^+,\ -m< k \leq m \}$,
where $H_{\alpha,k} = \{ v \in V \mid\ip{v}{\alpha}=k\}$,
 $\alpha \in \Phi$ and $k\in\mathbb{Z}$. The {\it dominant chamber} of $V$ is 
 the intersection $\bigcap_{\alpha\in\Phi_{>0}}\{v\in V\mid\left\langle 
 v,\alpha\right\rangle\ge 0\}$.
Every region contained in the dominant chamber is called  {\it dominant}.
We denote by $\rR_+^m(\Phi)$ the set of dominant regions in \mshi{\Phi}
and we note that  $\rR_+^m(\Phi)$ coincides with the set of  dominant
regions in the $m$-Catalan arrangement $\mbox{Cat}^m(\Phi)$.
The number of regions in $\rR_+^m(\Phi)$ is $\mcatn{m}{\Phi}$.
We refer the reader to \cite{ath-gcn-04} for more details.

A \emph{$N-E$ lattice path}, is a lattice path in the grid $\mathbb{Z} \times 
\mathbb{Z}$
which takes only North $(0,1)$ and East $(1,0)$ steps.
An {\em $m$-Dyck path of height $n$}, or an {\em $(m,n)$-Dyck path} for short,
is a $N-E$ lattice path from $(0,0)$ to $(mn,n)$ that does not go below the line
$y=\frac{1}{m}x$. Finally, an \emph{$N-E$ lattice path inside an \emph{$n\times mn$}
rectangle}, is an $N-E$ lattice path from $(0,0)$ to $(mn,n)$.
The number of $(m,n)$-Dyck paths of height $n$ is $\mcatn{m}{A_{n-1}}$,
while that of $N-E$ lattice paths inside an $n\times mn$ rectangle is $\mcatn{m}{C_n}$.
Although  there is not a theory establishing type $(m,\Phi)$-lattice paths,
for our purposes we consider the above sets as instances
of type $A$ and $C$ Catalan objects, avoiding further generalizations.

\section{Dominant regions and \bj}
\label{sec:dom reg}

\subsection{Shi arrangement and dominant alcoves}
\label{sec:mShi}

For $m\to\infty$, the arrangement   \mshi{\Phi}
is the infinite collection of hyperplanes
$H_{\alpha,k}$, $\alpha\in\Phi^+,k\in\mathbb Z$.
The arrangement ${\sf Shi}^{\infty}(\Phi)$, which we simplify to \shi{\Phi},
is called \emph{ the Shi arrangement of $\Phi$}.
All regions in \shi{\Phi} are simplices, called {\em alcoves}.
The set of {\em dominant alcoves} i.e., those  lying in the dominant chamber,
is denoted by $\aA_+(\Phi)$.
The \emph{fundamental alcove} $\aA_0$ is the unique
dominant alcove whose closure $\bar{\aA}_0$ contains the origin.
For a fixed finite crystallographic root system $\Phi$, we
denote by $S_a$ the set of  reflections through all hyperplanes  in  \shi{\Phi}.
The \emph{affine Weyl group} $W_a$ of
$\Phi$ is the infinite Coxeter group  generated by the reflections in $S_a$.
The group $W_a$ acts simply transitively on the set of alcoves in
\shi{\Phi}, thus one can identify each alcove $\aA$ with the
unique $w\in W_a$ for which $\aA=w\aA_0$.
The above bijection restricts to one between
the set $\mathcal{M}_+(W_{\alpha})$ of minimal length coset
representatives in $W_a/W$ and that of dominant alcoves in \shi{\Phi}.
In other words, $w\alc_0\in\alc_+(\Phi)$
if and only if $w\in$ $\mathcal{M}_+(W_{\alpha})$.

Clearly, each dominant region
$\rR\in\rR_+^m(\Phi)$ consists of one or more
alcoves. However, among them there exists a unique, denoted by $\alc_{\rR}$,
which is closest to the origin. We call this the \emph{$m$-minimal} alcove of
$\rR$. Every hyperplane $H_{\alpha,k}$ that supports a facet of $\rR$ is called
a \emph{wall}. We say that $H_{\alpha,k}$ is a \emph{separating wall} if $\rR$
and $\aA_0$ lie in different halfspaces delimited by $H_{\alpha,k}$.
It is proved in  \cite[Section 3]{ath-rgcn-05} that each region  $\rR$ in
\mshi{\Phi} and its minimal alcove $\alc_{\rR}$	have the same set of separating
walls. We refer the reader to \cite{ath-gcn-04, ath-
  rgcn-05, atz-pc-06}  for more details.

Our goal now is to exploit two different viewpoints
of the dominant alcoves in \shi{A_{n-1}} and \shi{C_n}: (i)
the \emph{Shi tableau} and (ii) the \emph{abacus diagram}. These combinatorial
structures, combined with the fact that each region in \mshi{\Phi}
is represented by its $m$-minimal alcove,
will be our tools for constructing \bj.
\subsection{Combinatorial viewpoint I (Shi tableaux for dominant alcoves)}
\label{sec:viewp}
Based on an idea of Shi, who arranges the positive roots of $\Phi^+$ in
diagrams \cite{sh-nost-97},  we assign to each dominant alcove $\alc$, a
set $\{k_\alpha,\alpha\in\Phi^+\}\subset\mathbb{Z}$ of
coordinates  which describe its location in the affine Shi arrangement
\shi{A_{n-1}}.
In particular, each $k_\alpha$ counts the number of positive integer 
translations of the hyperplane $H_{\alpha,0}$, which separate $\alc$ from the 
origin. The set of these coordinates, arranged according to the corresponding  
root diagram, is called the \emph{Shi tableau of the alcove} $\alc$.
We note that the entries of the Shi tableau yield the face defining
inequalities of the alcove. More precisely, for each
$\alpha\in\Phi^+$ we have
\begin{equation}
\label{equ:face_ineq_alcove}
k<\ip{x}{\alpha}<k+1 \mbox{ for all }x\in \aA
\;\;\;\mbox{ if and only if }\;\;\;k_{\alpha}=k.
\end{equation}
The above inequalities imply that the coordinates of the Shi tableau
have to satisfy  the so-called {\em Shi conditions} i.e., for every
$\alpha,\beta,\gamma
\in\Phi^+$ with $\alpha+\beta=\gamma$
\begin{equation}
\label{equ:general{}shi{}cond}
k_{\gamma}=k_{\alpha}+k_{\beta}+\delta,\,
\mbox{ where }\,\delta=\delta(\alpha,\beta,\gamma) \in\{0,1\}.
\end{equation}

The Shi tableau of a dominant alcove $\alc$ in \shi{A_n}
can be seen as a staircase Young diagram
of shape $(n,n-1,\dots,1)$ whose entries are non-negative integers
$k_{ij}$,\,$1\leq i\leq j\leq n$ (see Figure~\ref{fig:all_type_tableaux},
left). In particular, the coordinate  $k_{ij}$ occupying the cell  $(i,n-j+1)$
\footnote{ The cell $(i,j)$ lies in the $i$-th row and $j$-th column of the
  diagram, where rows are counted from top to bottom and columns from left to
  right.}
of the Shi tableau, indicates the number of integer translates of the
hyperplane $H_{\alpha_{ij},0}$ that  separate the alcove $\alc$ from the origin.
The Shi conditions in this case become:
\begin{equation}
k_{ij}=k_{i\ell}+k_{\ell+1\,j}+\delta_{i\ell}\,
\mbox{ with }\,\delta_{i\ell} \in\{0,1\},
\end{equation}
for every $1\leq{}i<j\leq{}n$ and all
$i\leq\ell<j$.

Traditionally, the Shi tableau of an alcove in \shi{C_n} is represented by a 
shifted Young diagram. Below, we describe a \emph{different but equivalent} way
to write the Shi coordinates. We do this to emphasize and exploit its
relation to the type $A$ case. We arrange the coordinates in a staircase diagram
of shape $(2n-1,2n-2,\dots,1)$ as follows: for $1\leq i\leq j\leq n-1$ the
coordinate $\overline k_{ij}$ occupies the cell $(i,j)$, whereas for
$1\leq{}i\leq{}j\leq{}n$ the coordinate $k_{ij}$ occupies the cell $(i,2n-j)$.
We finally fill the remaining cells so that the diagram is self-conjugate.
Notice that the tableau obtained this way is a self-conjugate Shi
tableau of type $A_{2n-1}$.

\begin{figure}[h]
  \begin{center}
  \begin{tikzpicture}[scale=0.65]
  \begin{scope}[scale=0.85,xshift=0cm,yshift=0cm]
  \draw[line width= 0.5pt](0,0)rectangle(1,4);
  \draw[line width= 0.5pt](1,1)rectangle(2,4);
  \draw[line width=0.5pt](2,2)rectangle(3,4);
  \draw[line width= 0.5pt](3,3)rectangle(4,4);
  \draw[line width= 0.5pt](0,1)--(1,1);
  \draw[line width= 0.5pt] (0,2)--(2,2);
  \draw[line width= 0.5pt] (0,3)--(3,3);
  \node at(.5,3.5){$k_{14}$};\node at(1.5,3.5){$k_{13}$};\node at(2.5,3.5)
  {$k_{12}$};\node at(3.5, 3.5){$k_{11}$};
  \node at(.5,2.5){$k_{24}$};\node at(1.5,2.5){$k_{23}$};\node
  at(2.5,2.5){$k_{22}$};\node at(.5,1.5){$k_{34}$};\node
  at(1.5,1.5){$k_{33}$};\node at(.5,.5){$k_{44}$};
  \end{scope}
  \begin{scope}[scale=0.85,xshift=15cm,yshift=0cm]
  \foreach\i in{-3,...,3}{\draw(\i,3)rectangle(1+\i,4);}
  \foreach\i in{-2,...,2}{\draw(\i,2)rectangle(1+\i,3);}
  \foreach\i in{-1,...,1}{\draw(\i,1)rectangle(1+\i,2);}
  \foreach\i in{0,...,0}{\draw(\i,0)rectangle(1+\i,1);}
  \node at(-2.5,3.5){$\overline k_{11}$};
  \node at(-1.5,3.5){$\overline k_{12}$};
  \node at(-0.5,3.5){$\overline k_{13}$};
  \node at(0.5,3.5){$k_{14}$};\node at(1.5,3.5){$k_{13}$};
  \node at (2.5,3.5) {$k_{12}$};\node at(3.5,3.5){$k_{11}$};
  \node at (-1.5,2.5){$\overline k_{22}$};
  \node at(-0.5,2.5){$\overline k_{23}$};
  \node at(0.5,2.5){$k_{24}$};\node at(1.5,2.5){$k_{23}$};
  \node at(2.5,2.5){$k_{22}$};
  \node at(-0.5,1.5){$\overline k_{33}$};
  \node at(0.5,1.5){$k_{34}$};\node at(1.5,1.5){$k_{33}$};
  \node at (0.5,0.5) {$k_{44}$};
  \foreach\i in{-3,...,3}{\draw[black!50!](-3,\i)rectangle(-2,\i+1);}
  \foreach\i in{-2,...,2}{\draw[black!50](-2,\i)rectangle(-1,1+\i);}
  \foreach\i in{-1,...,1}{\draw[black!50](-1,\i)rectangle(0,1+\i);}
  \foreach\i in{0,...,0}{\draw[black!50](0,\i)rectangle(1,1+\i);}
  \node at(-2.5,-2.5) {$\tc{black!50}{k_{11}}$};
  \node at(-2.5,-1.5){$\tc{black!50}{k_{12}}$};
  \node at(-2.5,-0.5){$\tc{black!50}{k_{13}}$};
  \node at(-2.5,0.5){$\tc{black!50}{k_{24}}$};
  \node at(-2.5,1.5){$\tc{black!50}{\overline k_{13}}$};
  \node at(-2.5,2.5){$\tc{black!50}{\overline k_{12}}$};
  \node at(-1.5,1.5){$\tc{black!50}{\overline k_{23}}$};
  \node at(-1.5,0.5){$\tc{black!50}{k_{14}}$};
  \node at(-1.5,-0.5){$\tc{black!50}{k_{23}}$};
  \node at(-1.5,-1.5){$\tc{black!50}{k_{22}}$};
  \node at(-0.5,0.5){$\tc{black!50}{k_{34}}$};
  \node at(-0.5,-0.5){$\tc{black!50}{k_{33}}$};
  \end{scope}

  \end{tikzpicture}
  \caption{The Shi tableau for type $A_4$(left) and $C_4$(right).
    Replacing each coordinate $k_{ij}$ by the corresponding root $\alpha_{ij}$,
    we obtain the root diagram. Notice that, in both diagrams, the simple roots 
    are those located on the diagonal, while  each positive root is the sum 
  of the simple roots lying below and to its right (this agrees with 
  expressions \eqref{typeAroots} and \eqref{typeCroots}).
        }
  \label{fig:all_type_tableaux}
\end{center}
\end{figure}

\begin{figure}
\begin{center}
\begin{tikzpicture}[scale=1.1]

\path (0:6cm) coordinate (+O2); \path (0:-3cm) coordinate (-O2);  
\node at (0:6.5cm) {$ \tc{black}{H_{\alpha_2,0}}$};
\begin{scope}[yshift= 0.87 cm]
\path (0:6cm) coordinate (+T2); \path (0:-3cm) coordinate (-T2); \node at 
(0:6.5cm) {$ \tc{black}{H_{\alpha_2,1}}$};
\end{scope} 
\begin{scope}[yshift= 1.74 cm]
\path (0:6cm) coordinate (+TT2); \path (0:-3cm) coordinate (-TT2); \node at 
(0:6.5cm) {$ \tc{black}{H_{\alpha_2,2}}$};
\end{scope} 
\begin{scope}[yshift= 2.61 cm]
\path (0:6cm) coordinate (+TTT2); \path (0:-3cm) coordinate (-TTT2); \node at 
(0:6.5cm) {$ \tc{black}{H_{\alpha_2,3}}$};
\end{scope}

\path (120:4cm) coordinate (+O12); \path (120:-1.2cm) coordinate (-O12); \node 
at (120:4.2 cm) { \rotatebox{0}{ $ \tc{black}{H_{\alpha_{12},0}}$ }  };
\begin{scope}[xshift= 1cm] 
\path (120:4cm) coordinate (+T12); \path (120:-1.2cm) coordinate (-T12); \node 
at (120:4.2cm) {\rotatebox{0}{ $ \tc{black}{H_{\alpha_{12},1}} $ }  };
\end{scope} 
\begin{scope}[xshift= 2cm] 
\path (120:4cm) coordinate (+TT12); \path (120:-1.2cm) coordinate (-TT12); 
\node at (120:4.2 cm) {\rotatebox{0}{ $ \tc{black}{H_{\alpha_{12},2}} $ }  };
\end{scope} 
\begin{scope}[xshift= 3cm] 
\path (120:4cm) coordinate (+TTT12); \path (120:-1.2cm) coordinate (-TTT12); 
\node at (120:4.2cm) {\rotatebox{0}{ $ \tc{black}{H_{\alpha_{12},3}} $ }  };
\end{scope}

\path (60:5cm) coordinate (+O1); \path (60:-1.3cm) coordinate (-O1); \node at 
(60:5.2cm) { \rotatebox{0}{$ \tc{black}{ H_{\alpha_1,0}}$}};
\begin{scope}[xshift=1 cm]
\path (60:5cm) coordinate (+T1); \path (60:-1.3cm) coordinate (-T1);  \node at 
(60:5.2cm) { \rotatebox{0}{$ \tc{black}{H_{\alpha_1,1}}$}}; 
\end{scope} 
\begin{scope}[xshift=2 cm]
\path (60:5cm) coordinate (+TT1); \path (60:-1.3 cm) coordinate (-TT1);  \node 
at (60:5.2cm) { \rotatebox{0}{$ \tc{black}{H_{\alpha_1,2}}$}};  
\end{scope}
\begin{scope}[xshift=3 cm]
\path (60:5.2cm) coordinate (+TTT1); \path (60:-1.3 cm) coordinate (-TTT1);  
\node at (60:5.2cm) { \rotatebox{0}{$ \tc{black}{H_{\alpha_1,3}}$}}; 
\end{scope}

      \draw[line width= 1pt, color=black] (-O2) -- (+O2);
       \draw[line width= 1pt, color=black] (-O12) -- (+O12);
         \draw[line width= 1pt, color=black] (-O1) -- (+O1);
  \draw[line width= 0.5pt, color=black] (-T12) -- (+T12);
 \draw[line width= 0.5pt, color=black] (-T2) -- (+T2);
  \draw[line width= 0.5pt, color=black] (-T1) -- (+T1);
 \draw[line width= 0.5pt, color=black] (-TT2) -- (+TT2);
 \draw[line width= 0.5pt, color=black] (-TT12) -- (+TT12);
  \draw[line width=0.5pt, color=black] (-TT1) -- (+TT1);
  \draw[line width=0.5pt, color=black] (-TTT2) -- (+TTT2);
 \draw[line width=0.5, color=black] (-TTT12) -- (+TTT12);
  \draw[line width=0.5, color=black] (-TTT1) -- (+TTT1);
  
  \filldraw[color=mycyan!10!gray,opacity=0.15] (+O1)--(34.4:7.6)--(+O2)--(0,0);

\fill[pattern=north west lines, pattern color=mycyan,xshift=
3.02cm,yshift=1.74cm] (60:1)--(0:1)--(0,0);

\node at (30:4.4) {$\mathcal R$};
\draw [mycyan,thick, -stealth] (27:8)  to[out=-100,in=80, 
distance=1.7cm ] (30:4.6);

 \begin{scope}[xshift=7.5 cm,yshift=3cm]
 \node at (1.4,1.4){\fs Shi tableau of: };
 \node at (0.5,1){\fs the region };
 \draw[line width= 0.5pt](0,0)rectangle(0.4,0.4);
 \draw[line width= 0.5pt](0,0.4)rectangle(0.8,0.8);
 \draw[line width= 0.5pt](0.4,0.4)--(0.4,0.8);
 \node at (0.2,0.2){2};
 \node at (0.2,0.6){3};
 \node at (0.6,0.6){2};
 \end{scope}
 
 \begin{scope}[xshift=9.5 cm,yshift=3cm]
  \node at (0.8,1){\fs $\&$ its minimal alcove };
 \draw[line width= 0.5pt](0,0)rectangle(0.4,0.4);
 \draw[line width= 0.5pt](0,0.4)rectangle(0.8,0.8);
 \draw[line width= 0.5pt](0.4,0.4)--(0.4,0.8);
 \node at (0.2,0.2){2};
 \node at (0.2,0.6){4};
 \node at (0.6,0.6){2};
 \end{scope}
 
\end{tikzpicture}
\end{center}
	\caption{The arrangement $\mbox{Shi}^3(A_2)$. The dashed alcove is a
    $3$-minimal alcove since,  among the two alcoves contained in $\rR$,
    it is the one closest to the origin.
    The Shi tableaux of $\rR$ and $\aA_{\rR}$, are shown on the	right.}
  \label{fig:ex_tableau}
\end{figure}
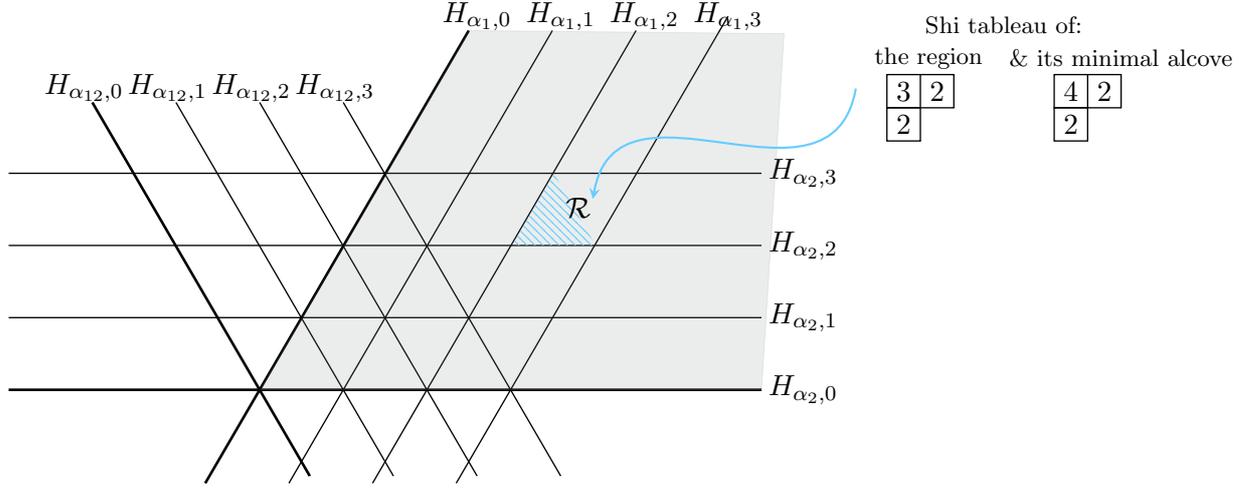

\subsection{Combinatorial viewpoint II (The abacus diagram)}
\label{sec:1line}
A standard way to realize the affine group
$\widetilde A_{n-1}$  is by identifying it with the set of all
$\mathbb{Z}$-permutations i.e., all bijections $w$ of $\mathbb{Z}$ in itself
such that:
\begin{itemize}
\item[(i)]
 $w(x+n)=w(x)+n$ for all $x\in \mathbb{Z}$, and
\item[(ii)]
 $\sum\limits_{i=1}^{n} w(i)=\binom{n+1}{2}$,
\end{itemize}
with composition as group operation.
Clearly, each such permutation is uniquely determined by its
values on $n$ consecutive integers.
Thus, for each $k\in\mathbb{Z}$ we write $w=[w(k+1),\ldots,w(k+n)]$
and call  this a \emph{window} of $w$. In the special case where $k=0$,
we say that  $[w(1),\dots,w(n)]$ is  the \emph{base window} of $w$.
In this paper we will often encode the  base window of a permutation
$w\in\widetilde A_{n-1}$ in a  more concise form, which is actually
equivalent to the so-called \emph{abacus representation} of $w$ 
\cite{be-bra-vaz-09,fi-vaz-10}.
To do this, for each $a\in\mathbb{Z}$
we set  $r=a\!\!\mod{n}$, $\ell=\left\lfloor\frac{a}{n}\right\rfloor$
and we write  $a:=r^\ell$ to imply that  $a=r+n\ell$.
We say that $r$ is the \emph{base} and $\ell$  the \emph{level} of $a$.
Note that, although $n$ is suppressed in the base-level notation, it is
implicit  from the rank of the affine group in which the permutation $w$ 
belongs. In
other words, when we write
$w=[r_1^{\ell_1},\ldots,r_n^{\ell_n}]\in\widetilde A_{n-1}$, we tacitly
understand that $r_i^{\ell_i}=r_i+n\ell_i$.
The \emph{abacus diagram} or simply \emph{abacus} of $w$
is the base window written in the concise form
$[r_1^{\ell_1},\dots,r_{n}^{\ell_{n}}]$
\footnote{This diagram corresponds to the flush abacus with $n$
runners, whose $i$-th runner has defining bead  on level $\ell_i$.
Abacus models encode nicely the action of the affine group $W_{\alpha}$
in the set of alcoves $\aA(\Phi)$, when $\Phi$ is a classical root system
\cite{han-bra-12}.
This justifies the term {\em abacus},
which we use with no further reference to its structure and properties.
}.
Then,  conditions {\rm(i)} and {\rm(ii)} imply that:
\begin{equation}
\label{equ:bracket}
\{r_1,\ldots,r_n\}=\{1,\dots,n\} \;\;\mbox{ and }\;\; \ell_1+\cdots+\ell_n=0.
\end{equation}
The \emph{level vector} $\vec{n}(w)=(\beta_1,\ldots,\beta_n)$ of
$w$ is the vector whose  $i$-th coordinate $\beta_i$ is equal to the level of
$i$ in the base window of $w$.
Clearly $\{\beta_1,\ldots,\beta_n\}=$ $\{\ell_1,\ldots,\ell_n\}$
and $\{1^{\beta_1},\ldots,n^{\beta_n}\}=\{r_1^{\ell_1},\ldots,r_n^{\ell_n}\}$.
 If, in addition, the entries of the base window are sorted i.e.,
$w(1)<\cdots<w(n)$, then $w$ belongs to the set $\mincos{A}{n-1}$
 of minimal length  coset representatives of $\widetilde{A}_{n-1}/A_{n-1}$.
 In fact, it turns out that all permutations  $w\in\widetilde A_{n-1}$
 whose base window is sorted,   biject to the set $\mincos{A}{n-1}$
(see \cite[Chapter 8.3]{bb_ccg_04}).
In this special case, the level vector suffices to determine $w$,
since its window consists of the numbers in
$\{1^{\beta_1},\ldots,n^{\beta_n}\}$ ordered increasingly.
  For instance, if $w\in\mincos{A}{3}$ has
  level vector $\vec{n}(w)=(0,-2,4,-2)$, then its \bracket
  consists of the numbers in $\{1^0,2^{-2},3^{4},4^{-2}\}$ ordered
  increasingly. Using base-level notation, we have
  $(1^0,2^{-2},3^4,4^{-2})=(1,-6,19,-4)$ and thus, the base window and \bracket
  of $w$ are $[-6,-4,1,19]$ and $[2^{-2},4^{-2},1^0,3^4]$ respectively.

A standard way to describe the affine group
$\widetilde C_n$  is by identifying it with the set of all \emph{mirrored}
$\mathbb{Z}$-permutations i.e., bijections $w$ of $\mathbb{Z}$ in itself
such that:
\begin{itemize}
\item[\rm(i)] $w(x+N)=w(x)+N$  and
\item[\rm(ii)] $w(-x)=-w(x)$ for all $x\in \mathbb{Z}$,
\end{itemize}
where $N=2n+1$ and with composition as group operation. Combining {\rm(i)} and
{\rm(ii)}, one can easily verify that $\sum_{i=1}^{2n+1}w(i)=\binom{2n+2}{2}$,
hence $\widetilde C_n$ is a subgroup of $\widetilde{A}_{2n}$.
%
As we did in the type $A$ case,
we can write the abacus diagram of $w\in\widetilde C_n$  in the
concise form $[r_1^{\ell_1},\ldots,r_{2n}^{\ell_{2n}},r_{2n+1}^{\ell_{2n+1}}]$,
where now $r^\ell:=r+\ell\,N$. In this setting, {\rm(i)}
and {\rm(ii)} imply that:
\begin{enumerate}[\small i.]
\item $w(0)=0$ and $w(2n+1)=2n+1$,
\item $\{r_1,\ldots,r_{2n}\}=\{1,\ldots,2n\},$
\item $\ell_1+\cdots+\ell_{2n}=0$ and
\item if $r_i+r_j=2n+1$ then $\ell_i=-\ell_j$.
\end{enumerate}

Notice that the first condition above implies that
$r_{2n+1}=2n+1$ and $\ell_{2n+1}=0$ for every $w\in\widetilde C_n$.
Thus, the last coordinate of the abacus diagram is somewhat redundant.
In Proposition \ref{self_conj_bijec}, we rediscover and redefine type $C$ abacus
diagrams by exploiting the connection between type A and type C Shi tableaux.
These  diagrams are slightly different, in the sense that the last
coordinate mentioned above does not appear. Strictly speaking, the above 
definition of the affine group $\widetilde C_n$ is not necessary to us here. We 
refer the reader to \cite[Chapter 8.4,8.5]{bb_ccg_04} and \cite{han-bra-12} for 
more details.
\subsection{Switching from the Shi tableau to the \bracket and vice versa}
 \label{sec:shivsbr}
In this paragraph, we  explain how, for each dominant alcove in
$\aA_+(A_{n-1})$,
we go back and forth from its Shi tableau to its abacus diagram.
 We then  exploit this, in order to
apply the same for alcoves in $\aA_+(C_n)$ (see Proposition
\ref{self_conj_bijec}).

In \cite[Section 2.7]{fvt-fsw-11} Fishel et al., by rephrasing results of Shi
\cite{sh-pawg-99}, associate Shi tableaux with abacuses.
In particular, they prove the following two propositions.
\begin{proposition}\cite{fvt-fsw-11}
  \label{prop:shi}
  Consider a dominant alcove $w\alc_0\in\al{A}{n}$
  with \bracket  $w=[r_1^{\ell_1},\dots,r_{n+1}^{\ell_{n+1}}]$.
  Let $\mathsf{T}(w)$ be the Young tableau of shape $(n,n-1,\ldots,1)$
  whose entries  $k_{ij},\; 1\leq{i}\leq{j}\leq{n}$, are:
  \begin{equation}
  \label{1kij}
  k_{ij}=\begin{cases}\ell_{j+1}-\ell_{i},&\mbox{ if }\;\;\; r_{i}< r_{j+1}\\
  \ell_{j+1}-\ell_i-1,&\mbox{ if }\;\;\;r_{i}>r_{j+1}.\end{cases}
  \end{equation}
  The map $\mathsf T$ is a bijection between the set $\alc_+(A_n)$
   of dominant  alcoves in \shi{A_{n-1}}
   and the set of Shi tableaux of type $A_{n}$. 
\end{proposition}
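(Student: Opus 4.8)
The plan is to identify the array $\mathsf T(w)$ with the genuine Shi tableau of the alcove $w\alc_0$, after which bijectivity becomes formal. Recall from Section~\ref{sec:1line} that sorted windows -- equivalently, abacuses $[r_1^{\ell_1},\dots,r_{n+1}^{\ell_{n+1}}]$ subject to \eqref{equ:bracket} -- are in bijection with the minimal length coset representatives $\mincos{A}{n}$, and that these in turn index the dominant alcoves $\alc_+(A_n)$. Thus it suffices to check that, for each $1\le i\le j\le n$, the entry $k_{ij}$ produced by \eqref{1kij} equals the number of integer translates of $H_{\alpha_{ij},0}$ separating $w\alc_0$ from the origin, where $\alpha_{ij}=\varepsilon_i-\varepsilon_{j+1}$ as in \eqref{typeAroots}. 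Granting this, $\mathsf T$ factors as the bijection sending an abacus to its dominant alcove, followed by the map sending a dominant alcove to its array of coordinates; the latter is a bijection onto Shi tableaux by \eqref{equ:face_ineq_alcove} (coordinates determine the alcove) together with Shi's classification (every array obeying the Shi conditions is realized). Hence $\mathsf T$, being a composite of bijections, is itself a bijection.

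The heart is therefore the \emph{master formula}
\[
k_{ij}=\fl{\frac{w(j+1)-w(i)}{n+1}},
\]
where $[w(1),\dots,w(n+1)]$ with $w(1)<\cdots<w(n+1)$ is the base window. I would establish it by exhibiting an explicit point $x$ in the interior of $w\alc_0$ and evaluating $\ip{x}{\alpha_{ij}}$: by \eqref{equ:face_ineq_alcove} the integer part of $\ip{x}{\alpha_{ij}}$ is exactly $k_{ij}$, and tracking the action of the affine permutation $w$ on the fundamental alcove shows this integer part is the displayed floor (this is where I would invoke Shi \cite{sh-pawg-99}, as rephrased in \cite{fvt-fsw-11}). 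With the master formula in hand, \eqref{1kij} follows by pure arithmetic: writing $w(i)=r_i+(n+1)\ell_i$ and $w(j+1)=r_{j+1}+(n+1)\ell_{j+1}$ gives
\[
\fl{\frac{w(j+1)-w(i)}{n+1}}=(\ell_{j+1}-\ell_i)+\fl{\frac{r_{j+1}-r_i}{n+1}},
\]
and since $r_i,r_{j+1}\in\{1,\dots,n+1\}$ are distinct we have $0<|r_{j+1}-r_i|\le n<n+1$, so the last floor equals $0$ when $r_i<r_{j+1}$ and $-1$ when $r_i>r_{j+1}$, reproducing the two cases of \eqref{1kij}.

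Two loose ends are then immediate: non-negativity of the entries holds because the window is sorted, so $w(j+1)>w(i)$ for $j\ge i$ and the floor is $\ge 0$; and the Shi conditions \eqref{equ:general{}shi{}cond} hold automatically, since the $k_{ij}$ are the true coordinates of an honest alcove. I expect the main obstacle to be the master formula itself -- specifically, pinning down the correct correspondence between the affine permutation (and whether one works with $w$ or $w^{-1}$) and the alcove $w\alc_0$, and then locating a representative point whose pairing with $\alpha_{ij}$ has the prescribed integer part. Everything downstream -- the arithmetic split, non-negativity, the Shi conditions, and bijectivity -- is routine or a formal consequence of composing the already-established bijections. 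As an alternative to the composite argument one could construct the inverse of $\mathsf T$ directly, recovering the residue order $(r_1,\dots,r_{n+1})$ from the carry pattern $\delta_{i\ell}\in\{0,1\}$ of the Shi conditions and then the levels $\ell_i$ by telescoping the diagonal entries $k_{ii}$; but this is more laborious, so I would lead with the composite-bijection argument.
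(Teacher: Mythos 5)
The paper does not actually prove this proposition: it is imported verbatim from \cite{fvt-fsw-11} (``In \cite[Section 2.7]{fvt-fsw-11} Fishel et al.\ \dots prove the following two propositions''), so there is no in-paper argument to compare yours against. On its own terms your sketch is correct, and the structure is the right one: the composite-bijection framing (sorted abacus $\leftrightarrow$ minimal coset representative $\leftrightarrow$ dominant alcove $\leftrightarrow$ coordinate array) is exactly how the paper uses the result downstream, and your master formula $k_{ij}=\fl{\tfrac{w(j+1)-w(i)}{n+1}}$ is consistent with the paper's conventions --- it reproduces every entry of the tableau in Example~\ref{ex:shi} for $w=[5^{-2},2^{-1},4^0,3^1,1^2]$ with window $[-5,-3,4,8,11]$. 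The arithmetic split $\fl{\tfrac{w(j+1)-w(i)}{n+1}}=(\ell_{j+1}-\ell_i)+\fl{\tfrac{r_{j+1}-r_i}{n+1}}$ with the last floor equal to $0$ or $-1$ according to the sign of $r_{j+1}-r_i$ is airtight, as is non-negativity from sortedness. The one caveat is that the two genuinely non-routine ingredients --- the master formula itself (i.e.\ Shi's computation of the alcove coordinates of $w\aA_0$ from the window, including the $w$-versus-$w^{-1}$ convention you rightly flag) and the surjectivity onto all arrays satisfying the Shi conditions --- are both deferred to Shi \cite{sh-pawg-99} and \cite{fvt-fsw-11} rather than proved. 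Since the proposition is itself a citation, that is a reasonable division of labor, but be aware that your proposal is a correct \emph{reduction} of the statement to those two cited facts plus elementary arithmetic, not a self-contained proof.
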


Before continuing with the next proposition, a few remarks are in order.
First, notice that the map above yields the same tableau if, instead of
considering the abacus of the base window, we chose any other window of
$w$ (in base-level notation). This is natural, since the tableau
${\sf T}(w)$ should be
independent of the way we represent the corresponding permutation $w$.
It is also clear from the above bijection that permutations whose \bracket
entries are sorted correspond to dominant regions
(since, in this case, all  $k_{ij}$ are positive).
Since sorted abacuses correspond to minimal length coset representatives in
$\widetilde A_{n}/A_{n}$, we verify the fact that $w\aA_0$ is a
dominant alcove in \shi{A_{n}} if and only if $w\in\mincos{A}{n}$. In the
remainder of the paper, we freely interchange
the statements \textquotedblleft$w\aA_0\in\aA_+(A_n)$\textquotedblright and
``$w\in\mincos{A}{n}$",
according to which fits better in the situation.
Thus, when we write $w\in\mincos{A}{n}$ we sometimes use it to
imply that $w\aA_0$ is a {\em dominant} alcove in \shi{A_n}
or, other times, to imply that the entries of the abacus diagram
of $w$ are {\em in increasing order}.
\smallskip

The reverse map of $\mathsf T$  is rather involved;
it sends each Shi tableau $T$ to a unique dominant alcove $w\aA_0$.
In Proposition \ref{prop:inverse},
we determine $w\aA_0$ by providing a way to
find the \emph{normalized} window of $w$,
i.e., the window whose smallest entry is $0$.
Finally, in Lemma \ref{cor:inverse} we show how
to shift from the normalized to the base window or equivalently
 the abacus diagram of $w$.

\begin{proposition}\cite{fvt-fsw-11}
 \label{prop:inverse}
	Let $T=\{k_{ij}: 1\leq{i}\leq{j}\leq{n}\}$  be the Shi tableau of
	a dominant alcove in $\aA_+(A_{n})$. For each $1 \leq j \leq n$
	let
	\begin{enumerate}[(i)]
	\item
  \label{triple}
	$b_j:=\left| \{ (1,\ell,j), 1 \leq \ell < j : k_{1j} = k_{1\ell}+
	k_{\ell+1\;j}+1\}\right|$, and
	\item
	 $\sigma=[\sigma(1),\ldots,\sigma(n)]$
	be the permutation of $\{1,\dots,n\}$ with inversion table  $(b_1,\ldots,b_n)$
	\footnote{Given a permutation  $\sigma$ of  $\{1,\dots,n\}$,
	its \emph{inversion table} is a length $n$ sequence
	$b_1,\ldots,b_n$, where $b_i$ is the number of elements that are
	smaller than $i$ and appear to the right of  $i$ in  $\sigma$.}.
	\end{enumerate}
	We define $\mathsf{B}(T)$ to be the permutation
  $w\in\mincos{A}{n-1}$
  whose normalized window has level vector
\begin{equation}
(0,k_{1,\scalebox{0.5}{$\sigma(1)$}},
k_{1,\scalebox{0.5}{$\sigma(2)$}},\ldots,
k_{1,\scalebox{0.5}{$\sigma(n)$}}).
\label{equ:Psi}
\end{equation}
The map which sends the tableau $T$ to $w\aA_0$ is the reverse map of
$\mathsf T$.
\end{proposition}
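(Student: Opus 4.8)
My plan is to establish the single identity $\mathsf{T}\circ\mathsf{B}=\mathrm{id}$; since $\mathsf{T}$ is already known to be a bijection (Proposition~\ref{prop:shi}), this suffices to identify $\mathsf{B}$ with $\mathsf{T}^{-1}$. So I fix a dominant Shi tableau $T=\{k_{ij}\}$, write $w_0=\mathsf{T}^{-1}(T)$ for the (a priori unknown) alcove it comes from, with sorted abacus $[r_1^{\ell_1},\dots,r_{n+1}^{\ell_{n+1}}]$, and aim to show that the recipe defining $\mathsf{B}(T)$ reconstructs exactly $w_0$. The guiding observation is that $T$ is completely determined by its first row $(k_{1j})_j$ together with the first-row increments $\delta(1,\ell,j):=k_{1j}-k_{1\ell}-k_{\ell+1\,j}\in\{0,1\}$, because the Shi conditions give $k_{\ell+1\,j}=k_{1j}-k_{1\ell}-\delta(1,\ell,j)$ for every $2\le\ell+1\le j$. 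Hence it is enough to recover the levels $\ell_i$ (from the first row) and the residues $r_i$ (from the increments, repackaged as the numbers $b_j$).

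The computational heart is to read the increments off the residues. Subtracting the three instances of \eqref{1kij} attached to the triple $(1,\ell,j)$ gives
\begin{equation}
\delta(1,\ell,j)=[r_1>r_{\ell+1}]+[r_{\ell+1}>r_{j+1}]-[r_1>r_{j+1}],
\label{plan:delta}
\end{equation}
where $[\,\cdot\,]$ is the Iverson bracket. Inspecting the six linear orders of the distinct residues $r_1,r_{\ell+1},r_{j+1}$ shows that \eqref{plan:delta} equals $1$ precisely when the triple is \emph{cyclically decreasing}. Introducing the linear order $\prec$ on residues that places $r_1$ at the top and then descends cyclically, this reads $\delta(1,\ell,j)=1\iff r_{j+1}\prec r_{\ell+1}$. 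Therefore $b_j=\bigl|\{\ell<j:\ r_{\ell+1}\succ r_{j+1}\}\bigr|$ is exactly the number of earlier columns whose residue is $\prec$-larger than the residue of column $j$; that is, $(b_1,\dots,b_n)$ is the inversion statistic of the sequence $r_2,\dots,r_{n+1}$ read in the order $\prec$.

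From here I would show that the permutation $\sigma$ reconstructed from the inversion table $(b_1,\dots,b_n)$ is precisely the permutation that sorts the columns $1,\dots,n$ according to the $\prec$-order of their residues. This is the step I expect to be the main obstacle: one must match the particular inversion-table convention of the footnote with the left-inversion count produced by \eqref{plan:delta}, while keeping track of the residue labels in $\{1,\dots,n+1\}$. Granting it, the reindexed entries $k_{1\sigma(1)},\dots,k_{1\sigma(n)}$ are now listed in increasing residue order; in that order the correction $-[r_1>r_{j+1}]$ of \eqref{1kij} telescopes against the cyclic rotation, turning each $k_{1\sigma(p)}$ into the honest level of the $p$-th residue measured against the reference residue $r_1$. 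Prepending the reference level $0$ yields the vector \eqref{equ:Psi}, and the requirement that the smallest window entry equal $0$ fixes the overall shift; this is exactly the normalized window of $w_0$.

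Finally I would close the loop. Having recovered both the levels and the residues of $w_0$, the residues regenerate every increment $\delta(1,\ell,j)$ through \eqref{plan:delta}, and these agree with the original increments, since both are governed by the same residue order. Thus $\mathsf{B}(T)$ has the same first row and the same first-row increments as $T$, and by the determinacy noted at the outset $\mathsf{T}(\mathsf{B}(T))=T$. The only genuinely subtle points are the inversion-table matching of the third paragraph and the bookkeeping of the normalization shift; the remaining verifications are routine casework on \eqref{plan:delta} together with the Shi conditions.
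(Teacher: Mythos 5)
The paper does not actually prove Proposition~\ref{prop:inverse}; it is imported from \cite{fvt-fsw-11} with no argument given, so there is nothing internal to compare your proof against, and I can only judge it on its own terms. On those terms it is essentially correct, and the one step you defer does go through. Your identity $\delta(1,\ell,j)=[r_1>r_{\ell+1}]+[r_{\ell+1}>r_{j+1}]-[r_1>r_{j+1}]$ follows by subtracting three instances of \eqref{1kij}, and the six-case check does show it equals $1$ exactly on cyclically decreasing triples, i.e.\ exactly when $r_{\ell+1}\succ r_{j+1}$ in your order $\prec$; this matches the worked Example~\ref{ex:shi}, where $b=(0,0,1,3)$ is precisely the left-inversion count of the word $(r_2,\dots,r_5)=(2,4,3,1)$. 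The ``main obstacle'' is resolved by one observation you stop just short of making: subtracting the minimal entry $r_1^{\ell_1}$ of the sorted base window sends $r_{j+1}^{\ell_{j+1}}$ to the number with residue $(r_{j+1}-r_1)\bmod(n+1)$ and level $\ell_{j+1}-\ell_1-[r_1>r_{j+1}]=k_{1j}$, and the map $v\mapsto(v-r_1)\bmod(n+1)$ is exactly the rank function of $\prec$ on $\{1,\dots,n+1\}\setminus\{r_1\}$. Hence the permutation $\sigma_0$ with $\sigma_0^{-1}(j)=(r_{j+1}-r_1)\bmod(n+1)$ simultaneously has footnote-style inversion table $\#\{j'<j:\ r_{j'+1}\succ r_{j+1}\}=b_j$ and assigns level $k_{1\sigma_0(v)}$ to the value $v$, which is \eqref{equ:Psi}; injectivity of the inversion-table encoding (together with the easy bound $0\le b_j\le j-1$, which you should state so that $\sigma$ exists) then forces $\sigma=\sigma_0$, and you are done without needing your final ``determinacy'' paragraph, which becomes redundant once $\mathsf{B}(T)$ is identified with the actual preimage $\mathsf{T}^{-1}(T)$. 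One cosmetic caveat: the object you construct is the base window translated by $-r_1^{\ell_1}$, which is a genuine window $[w(k+1),\dots,w(k+n+1)]$ only when $r_1=n+1$; this is a looseness in the paper's use of ``normalized window'' (compare Lemma~\ref{cor:inverse}, which also treats it as a plain translate), not a defect of your argument.
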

%

\begin{lemma}
  \label{cor:inverse}
  The base window of $w$ is obtained from its normalized window
  after subtracting  $k_{11}+\cdots+k_{1n}-1$ from each of its entries.
%
\end{lemma}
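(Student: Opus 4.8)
The plan is to track the two normalizations of the same permutation $w\in\mincos{A}{n-1}$ and compute the constant shift between them. By definition, the \emph{normalized} window of $w$ is the one whose smallest entry equals $0$, while the \emph{base} window is $[w(1),\dots,w(n)]$, which by condition (ii) on $\widetilde A_{n-1}$ must satisfy $\sum_{i=1}^n w(i)=\binom{n+1}{2}$. Since any two windows of $w$ that are translates of one another differ by a common additive constant on every entry (shifting a window by a constant $c$ adds $c$ to each of the $n$ values while preserving the permutation), the base and normalized windows differ by a single integer $c$, and the claim is precisely that $c=-(k_{11}+\cdots+k_{1n}-1)$, i.e.\ that the normalized window exceeds the base window entrywise by $k_{11}+\cdots+k_{1n}-1$.

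First I would pin down $c$ using the normalization condition (ii). Write the normalized window entries as $w(i)+c$ for each $i$; summing over $i$ gives
\begin{equation}
\sum_{i=1}^n\big(w(i)+c\big)=\binom{n+1}{2}+nc.
\end{equation}
So it suffices to compute the sum of the entries of the normalized window directly from Proposition~\ref{prop:inverse} and solve for $c$. The normalized window has level vector $(0,k_{1,\sigma(1)},\dots,k_{1,\sigma(n)})$, where $\sigma$ is the permutation from Proposition~\ref{prop:inverse}(ii). Crucially, $\sigma$ is a bijection of $\{1,\dots,n\}$, so the multiset $\{k_{1,\sigma(1)},\dots,k_{1,\sigma(n)}\}$ equals $\{k_{11},\dots,k_{1n}\}$; hence the nonzero levels appearing are exactly the first-row entries of the Shi tableau, in some order.

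The key step is then to convert the level vector of the normalized window into the actual sum of its entries and match it against $\binom{n+1}{2}+nc$. Here I would use the base-level encoding $a=r+n\ell$ from Section~\ref{sec:1line}: an entry with base $r\in\{1,\dots,n\}$ and level $\ell$ contributes $r+n\ell$. Since the normalized window is sorted (as $w\in\mincos{A}{n-1}$) and its entries are the distinct integers $\{1^{\beta_1},\dots,n^{\beta_n}\}$ with $\beta_i$ the prescribed levels, the bases $\{r_1,\dots,r_n\}$ are a permutation of $\{1,\dots,n\}$ by \eqref{equ:bracket}, so the sum of bases is $\binom{n+1}{2}$ and the sum of entries is $\binom{n+1}{2}+n\sum_i\beta_i$. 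But the normalized window's smallest entry is $0$, which forces a nonzero correction: I expect that writing out the smallest entry (the base associated to level $0$) to equal $0$ pins the bases precisely, and the difference between ``smallest entry $=0$'' and ``base $\in\{1,\dots,n\}$'' accounts for the extra $-1$ in the formula. Comparing the two expressions for the entry-sum and solving yields $nc=-n(k_{11}+\cdots+k_{1n}-1)$, hence $c=-(k_{11}+\cdots+k_{1n}-1)$, as claimed.

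The main obstacle will be bookkeeping the interaction between the two distinct normalizations, since the ``abacus'' $\{1^{\beta_1},\dots,n^{\beta_n}\}$ uses bases in $\{1,\dots,n\}$ while the \emph{normalized} window instead demands minimum value $0$; these two conventions are not identical and the gap between them is exactly where the $-1$ is produced. I would handle this carefully by first identifying which index attains the minimum entry in the normalized window, verifying that its level is $0$ (the leading coordinate of \eqref{equ:Psi}), and checking that forcing this entry to be $0$ rather than its natural base value shifts all bases by a fixed amount. Once that shift is isolated, the remaining computation is the routine summation above, and the stated subtraction of $k_{11}+\cdots+k_{1n}-1$ follows by equating the two formulas for the sum of the window entries.
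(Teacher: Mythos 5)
Your overall strategy is the same as the paper's: both windows of $w$ differ by a single additive constant, the base window's entry-sum is pinned down by condition (ii) on $\widetilde A_{n-1}$, so one computes the entry-sum of the normalized window and solves for the constant. The paper executes exactly this, using base-level arithmetic and the rewriting $0^0=(n+1)^{-1}$ to make the $-1$ appear.

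However, as written your argument has a genuine gap at precisely the point that matters. First, your intermediate formula for the entry-sum of the normalized window is based on the claim that its bases form a permutation of $\{1,\dots,n\}$ summing to $\binom{n+1}{2}$; this is false for the \emph{normalized} window, whose smallest entry is $0$. Per \eqref{equ:Psi} the normalized window consists of the numbers $0^0,1^{k_{1\sigma(1)}},\dots,n^{k_{1\sigma(n)}}$, so its bases are $\{0,1,\dots,n\}$ (equivalently, one entry has base $n+1$ at level $-1$), and condition \eqref{equ:bracket} simply does not apply to it. Second, the step that actually produces the $-1$ is deferred to ``I expect that \dots accounts for the extra $-1$,'' i.e.\ the conclusion is asserted rather than derived. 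The fix is the computation you gestured at but did not perform: the normalized window sums to $(0+1+\cdots+n)+(n+1)\sum_i k_{1i}$, the base window must sum to $1+\cdots+(n+1)$, and dividing the difference by the $n{+}1$ entries gives the per-entry shift $\sum_i k_{1i}-1$ --- the $-1$ coming exactly from the mismatch between the base set $\{0,\dots,n\}$ of the normalized window and the sum convention $1+\cdots+(n+1)$ for the base window. You correctly identified this mismatch as ``the main obstacle,'' but naming the obstacle is not the same as clearing it; until that arithmetic is carried out, the stated value of the shift is unproven.
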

\begin{proof}
The entries of the normalized window of $w$
are the elements in $A=\{0^0,1^{k_{1\sigma(1)}},\ldots,n^{k_{1\sigma(n)}}\}$
arranged increasingly.
To shift from the normalized  to the base window,
it suffices to add an integer $S$ to every element of $A$
 so that the resulting set of numbers sum up to $1+2+\cdots+n+(n+1)$.
 In other words, we seek an  $S$ which  satisfies:
\begin{align}
 & (0^0+S)+(1^{k_{1\sigma(1)}}+S)+\cdots+(n^{k_{1\sigma(n)}}+S)=1+\cdots+(n+1).
 \label{eq:S}
 \end{align}
 Making use of the base-level notation,
 the left-hand side of \eqref{eq:S}  is equal to:
 \begin{align*}
  &
  0^0+1^{k_{1\sigma(1)}}+\cdots+n^{k_{1\sigma(n)}}+(n+1)S\\
 &=(n+1)^{-1}+1^{k_{1\sigma(1)}}+\cdots+n^{k_{1\sigma(n)}}+0^S \\
 &= (1+\cdots+(n+1))+(n+1)(-1+k_{1\sigma(1)}+\cdots+k_{1\sigma(n)}+S)\\
 &= (1+\cdots+(n+1))+(n+1)(-1+k_{11}+\cdots+k_{1n}+S).
  \end{align*}
 Equating the above with the right-hand side of \eqref{eq:S},
 we deduce that indeed $S=-k_{11}-\cdots-k_{1n}+1$.
\end{proof}
\medskip

In what follows, we  present an explicit example of the maps described above.
\begin{example}
 \label{ex:shi}
	The dominant alcove $w\alc_0$ with  $w=[5^{-2},2^{-1},4^0,3^1,1^2]$
  has the  Shi tableau $\sf T$ shown below
  \begin{center}
  	\begin{tikzpicture}[scale=0.46]
    \draw[line width= 0.5pt](0,0)rectangle(1,4);
    \draw[line width= 0.5pt](1,1)rectangle(2,4);
    \draw[line width=0.5pt](2,2)rectangle(3,4);
    \draw[line width= 0.5pt](3,3)rectangle(4,4);
    \draw[line width= 0.5pt](0,1)--(1,1);
    \draw[line width= 0.5pt] (0,2)--(2,2);
    \draw[line width= 0.5pt] (0,3)--(3,3);
    \node at(.5,3.5){$3$};\node at(1.5,3.5){$2$};\node at(2.5,3.5)
    {$1$};\node at(3.5, 3.5){$0$};
    \node at(.5,2.5){$2$};\node at(1.5,2.5){$2$};\node at(2.5,2.5){$1$};
    \node at(.5,1.5){$1$};\node at(1.5,1.5){$0$};\node at(.5,.5){$0$};
    \node at (-1,2) {${\sf T}=$};
    \end{tikzpicture}
    \end{center}
  We next illustrate the reverse map.  
  According to Proposition  \ref{prop:inverse},  in order to find the normalized
  level-vector, for each $1\leq{j}\leq{4}$, we count  the number of triples
  for which  $k_{1j}=k_{1\,\ell-1}+k_{\ell j}+1$.
  As indicated by the figure below, we have $b_4=3$, $b_3=1$ and $b_2=0$,
  ($b_1=0$ always).
  \smallskip

	\begin{center}
	\begin{tikzpicture}[scale=0.6]

  \begin{scope}[scale=0.85,xshift=6cm,yshift=0cm]
  \draw[line width= 0.5pt](0,0)rectangle(1,4);
  \draw[line width= 0.5pt](1,1)rectangle(2,4);
  \draw[line width=0.5pt](2,2)rectangle(3,4);
  \draw[line width= 0.5pt](3,3)rectangle(4,4);
  \draw[line width= 0.5pt](0,1)--(1,1);
  \draw[line width= 0.5pt] (0,2)--(2,2);
  \draw[line width= 0.5pt] (0,3)--(3,3);
  \node at(.5,3.5){$\boldsymbol 3$};\node at(1.5,3.5){$2$};\node at(2.5,3.5)
  {$1$};\node at(3.5, 3.5){$0$};
  \node at(.5,2.5){$2$};\node at(1.5,2.5){$2$};\node at(2.5,2.5){$1$};
  \node at(.5,1.5){$1$};\node at(1.5,1.5){$0$};\node at(.5,.5){$0$};
  \node at (2,-1.8) {
    $\left.\begin{array}{cccc}
     {\boldsymbol 3}=2+0+\circled{1}\\
      {\boldsymbol 3}=1+1+\circled{1}\\
      {\boldsymbol 3}=0+2+\circled{1}
    \end{array}\right\}\Rightarrow b_4=3$
    };
  \end{scope}

\begin{scope}[scale=0.85,xshift=15cm,yshift=0cm]
  \draw[line width= 0.5pt](0,0)rectangle(1,4);
  \draw[line width= 0.5pt](1,1)rectangle(2,4);
  \draw[line width=0.5pt](2,2)rectangle(3,4);
  \draw[line width= 0.5pt](3,3)rectangle(4,4);
  \draw[line width= 0.5pt](0,1)--(1,1);
  \draw[line width= 0.5pt] (0,2)--(2,2);
  \draw[line width= 0.5pt] (0,3)--(3,3);
  \node at(.5,3.5){$3$};\node at(1.5,3.5){$\boldsymbol 2$};\node at(2.5,3.5)
  {$1$};\node at(3.5, 3.5){$0$};
  \node at(.5,2.5){$2$};\node at(1.5,2.5){$2$};\node at(2.5,2.5){$1$};
  \node at(.5,1.5){$1$};\node at(1.5,1.5){$0$};\node at(.5,.5){$0$};
   \node at (2,-1.8) {
     $\left.\begin{array}{l}
     {\boldsymbol 2}=2+0\\
     {\boldsymbol 2}=0+1+\circled{1}\\
     \end{array}\right\}\Rightarrow b_3=1$
    };
\end{scope}

\begin{scope}[scale=0.85,xshift=23cm,yshift=0cm]
\draw[line width= 0.5pt](0,0)rectangle(1,4);
\draw[line width= 0.5pt](1,1)rectangle(2,4);
\draw[line width=0.5pt](2,2)rectangle(3,4);
\draw[line width= 0.5pt](3,3)rectangle(4,4);
\draw[line width= 0.5pt](0,1)--(1,1);
\draw[line width= 0.5pt] (0,2)--(2,2);
\draw[line width= 0.5pt] (0,3)--(3,3);
\node at(.5,3.5){$3$};\node at(1.5,3.5){$2$};\node at(2.5,3.5)
{$\boldsymbol 1$};\node at(3.5, 3.5){$0$};
\node at(.5,2.5){$2$};\node at(1.5,2.5){$2$};\node at(2.5,2.5){$1$};
\node at(.5,1.5){$1$};\node at(1.5,1.5){$0$};\node at(.5,.5){$0$};
 \node at(2,-1.8){${\boldsymbol 1}=1+0 \Rightarrow b_2=0$};
\end{scope}
\end{tikzpicture}
	\end{center}
   Hence, the normalized level vector is
  $(0,k_{1\sigma(4)},k_{1\sigma(1)},k_{1\sigma(3)},k_{1\sigma(2)})=(0,3,0,2,1)$.
  This means that the elements of the set  $A=\{0^0,1^3,2^0,3^2,4^1\}$
  arranged in increasing order give the normalized window of $w$ i.e.
  $w=[0^0,2^0,4^1,3^2,1^3]$.
  In view of Corollary \ref{cor:inverse}, to
  shift from the normalized to the base window of $w$, we need  to subtract
  $k_{11}+k_{12}+k_{13}+k_{14}-1=5$  from each  of the elements of $A$.
  Using base-level notation, subtracting $5$  is equivalent to adding
  $0^{-1}$ or $5^{-2}$ to each element of $A$, from which  we deduce that the
  \bracket of  $w$ is indeed  $[5^{-2},2^{-1},4^0,3^1,1^2]$.
\end{example}
As we have seen in Section \ref{sec:viewp}, Shi tableaux of type $C_n$
coincide with self-conjugate Shi tableaux of type $A_{2n-1}$.
We  want to translate this in  terms of level vectors and
abacuses.

 Let $w$ be a  minimal length coset representative in
 $\mincos{A}{2n-1}$  with {\em antisymmetric level vector} 
 i.e. $\vec{n}(w)=(\beta_1,\ldots,\beta_n,-\beta_n,\ldots,-\beta_1)$. 
 Equivalently,  the abacus diagram of $w$ is the increasing rearrangement of
 the numbers in 
 $\{1^{\beta_1},\ldots,n^{\beta_n},
 (n+1)^{-\beta_n},\ldots,(2n)^{-\beta_1}\}$.
 We  claim that the antisymmetry of the level-vector is equivalent to the fact
 that the  abacus of $w$ is \emph{ balanced } i.e., it is of the form
 $[r_1^{\ell_1},\ldots,r_n^{\ell_n},r_{n+1}^{-\ell_{n}},\ldots,r_{2n}^{-\ell_1}]$
 where $r_i+r_{2n+1-i}=2n+1$. This is the content of the next lemma.

 \begin{lemma}
   \label{small_lemma}
   Let $w\aA_0$ be a dominant alcove in \shi{A_{2n-1}}.
   Then $\vec{n}(w)=(\beta_1,\ldots,\beta_n,-\beta_n,\ldots,-\beta_1)$
    if and only if the abacus diagram of $w$ is balanced.
   \end{lemma}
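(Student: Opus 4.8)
The plan is to factor both conditions through a single symmetry of the \emph{set} of window entries. Write the base window of $w$ as $W=\{w(1),\dots,w(2n)\}\subset\mathbb{Z}$; recall that for a dominant alcove it is sorted, $w(1)<\cdots<w(2n)$, since such $w$ lie in $\mincos{A}{2n-1}$. Let $\iota\colon\mathbb{Z}\to\mathbb{Z}$ be the involution $\iota(a)=2n+1-a$. I will show that \emph{each} of the two conditions in the lemma is equivalent to the single statement that $W$ is $\iota$-invariant, and then conclude by transitivity.

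First I would record how $\iota$ acts in base--level notation (here the modulus is $2n$, so $a=r+2n\ell$). For $r\in\{1,\dots,2n\}$ and $\ell\in\mathbb{Z}$ one computes
\[
\iota(r^{\ell})=2n+1-(r+2n\ell)=(2n+1-r)+2n(-\ell)=(2n+1-r)^{-\ell},
\]
using that $2n+1-r\in\{1,\dots,2n\}$ whenever $r\in\{1,\dots,2n\}$; thus $\iota$ carries base $r$ to base $2n+1-r$ and negates the level. Turning to the level vector, by definition $\beta_i$ is the level of the unique window entry of base $i$, so $W=\{1^{\beta_1},\dots,(2n)^{\beta_{2n}}\}$. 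By the displayed identity $\iota$ sends the entry $i^{\beta_i}$ to $(2n+1-i)^{-\beta_i}$, an integer of base $2n+1-i$. Since $W$ contains exactly one entry of base $2n+1-i$, namely $(2n+1-i)^{\beta_{2n+1-i}}$, the equality $\iota(W)=W$ holds if and only if $-\beta_i=\beta_{2n+1-i}$ for every $i$; that is, exactly when $\vec n(w)$ is antisymmetric.

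Next I would treat the abacus. Because $w$ is dominant, $W$ is sorted and $\iota$ is order-reversing, so $\iota$-invariance of $W$ is equivalent to $\iota(w(i))=w(2n+1-i)$ for all $i$, i.e.\ to $w(i)+w(2n+1-i)=2n+1$. Writing $w(i)=r_i^{\ell_i}$ and $w(2n+1-i)=r_{2n+1-i}^{\ell_{2n+1-i}}$ and reducing this identity modulo $2n$ yields $r_i+r_{2n+1-i}\equiv 1\pmod{2n}$; as each base lies in $\{1,\dots,2n\}$ their sum lies in $\{2,\dots,4n\}$, forcing $r_i+r_{2n+1-i}=2n+1$ and hence $\ell_i+\ell_{2n+1-i}=0$. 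These are precisely the two defining conditions of a balanced abacus, and the converse direction is the same computation read in reverse. Chaining the two equivalences through the common condition ``$W$ is $\iota$-invariant'' then proves the lemma.

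The only genuine subtlety -- and the point I would be most careful about -- is that the level vector is indexed by \emph{base} whereas the abacus is indexed by \emph{sorted position}, so the two statements inspect the same set $W$ through different labelings. Routing both through the single, label-free condition that $W$ be stable under $a\mapsto 2n+1-a$ is exactly what reconciles the two indexings; everything else reduces to the elementary modulo-$2n$ bookkeeping indicated above.
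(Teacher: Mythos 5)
Your proof is correct and rests on the same key observation as the paper's, namely that $a\mapsto 2n+1-a$ is an order-reversing involution sending $r^{\ell}$ to $(2n+1-r)^{-\ell}$, so that on the sorted window it pairs position $i$ with position $2n+1-i$. Routing both conditions through the single label-free statement ``$W$ is $\iota$-invariant'' is a slightly cleaner packaging than the paper's count of predecessors and successors, and it makes the reverse direction (which the paper dismisses as immediate) explicit, but it is not a genuinely different argument.
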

   \begin{proof}
     To prove the forward direction, we assume that the level vector is 
     antisymmetric. Then, recalling  that 
     $\alpha^{\ell}:=\alpha+\ell\,2n$, we 
     have:
  \begin{align*}
  k^{\beta_k}<\lambda^{\beta_{\lambda}}\Leftrightarrow\, &
  (2n+1-k)^{-\beta_k}<(2n+1-\lambda)^{-\beta_{\lambda}} \;\;\;\; \mbox{and}
  \\
  (2n+1-k)^{-\beta_k}<\lambda^{\beta_{\lambda}}\Leftrightarrow\, &
  (2n+1-\lambda)^{-\beta_{\lambda}}<k^{\beta_k}.
  \end{align*}   
  The above   equivalences  imply that the elements which precede 
  $k^{\beta_k}$  in the abacus diagram of $w$ are as many as the elements which 
  succeed
  $(2n+1-k)^{-\beta_k}$. This further implies that
  the diagram is balanced. 
  The reverse direction is immediate from the definition of balanced. 
 \end{proof}

\begin{proposition}
  \label{self_conj_bijec}
  The bijection $\mathsf T$ of Proposition \ref{prop:shi}
  restricts to one between self-conjugate tableau of alcoves
  in $\aA_+(A_{2n-1})$ and minimal length coset representatives
  $w\in\mincos{A}{2n-1}$ whose level vector is antisymmetric
  i.e., $\beta_i=-\beta_{2n+1-i}$ for all $1\leq{i}\leq{n}$.
\end{proposition}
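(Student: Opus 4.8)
The plan is to fold both implications of the claimed equivalence into a single order-reversing involution on abacus diagrams that mirrors transposition of Shi tableaux, and then read off the statement as a restriction of the bijection $\mathsf T$ to fixed-point sets. First I would make the self-conjugacy condition explicit at the level of entries. Writing the staircase tableau of type $A_{2n-1}$ so that, as in Section~\ref{sec:viewp}, the cell $(i,c)$ carries the coordinate $k_{i,\,2n-c}$, reflection across the main diagonal identifies $(i,c)$ with $(c,i)$; comparing the two entries and substituting $j=2n-c$ shows that a tableau is self-conjugate precisely when $k_{ij}=k_{2n-j,\,2n-i}$ for all $1\le i\le j\le 2n-1$.

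Next I would introduce the \emph{balanced reflection} $\hat w$ of an abacus $w=[r_1^{\ell_1},\dots,r_{2n}^{\ell_{2n}}]\in\mincos{A}{2n-1}$, defined by sending each window value $a=r^{\ell}$ to $2n+1-a$. Since $2n+1-r^{\ell}=(2n+1-r)^{-\ell}$ in base-level notation, the reflected base values $\{2n+1-r_i\}$ again exhaust $\{1,\dots,2n\}$, the reflected levels $\{-\ell_i\}$ still sum to $0$, and a short sum check gives $\sum_p\bigl(2n+1-w(2n+1-p)\bigr)=\binom{2n+1}{2}$, so $\hat w$ is a genuine sorted abacus in $\mincos{A}{2n-1}$. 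Because $x\mapsto 2n+1-x$ is order-reversing, sorting the reflected values yields the clean description $\hat r_p=2n+1-r_{2n+1-p}$ and $\hat\ell_p=-\ell_{2n+1-p}$.

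The computational heart, and the step I expect to be the most delicate, is the identity $\mathsf T(\hat w)=\mathsf T(w)^{\mathsf{transpose}}$, that is $\hat k_{ij}=k_{2n-j,\,2n-i}$. I would evaluate $\hat k_{ij}$ from \eqref{1kij} using $\hat\ell_{j+1}-\hat\ell_i=\ell_{2n+1-i}-\ell_{2n-j}$ together with the equivalence $\hat r_i>\hat r_{j+1}\iff r_{2n-j}>r_{2n+1-i}$, and compare against $k_{2n-j,\,2n-i}$ obtained by substituting $I=2n-j,\ J=2n-i$ (so $J+1=2n+1-i$) into the same formula. Both the level-difference part and the $\pm1$ correction governed by the comparison of bases then match term by term. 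A useful byproduct is that transposition automatically sends Shi tableaux to Shi tableaux, so no independent verification of this is needed.

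Finally I would conclude by a fixed-point argument. If $w$ is balanced then $\hat w=w$, whence $\mathsf T(w)^{\mathsf{transpose}}=\mathsf T(\hat w)=\mathsf T(w)$ and the tableau is self-conjugate; conversely, if $\mathsf T(w)$ is self-conjugate then $\mathsf T(\hat w)=\mathsf T(w)^{\mathsf{transpose}}=\mathsf T(w)$, and injectivity of $\mathsf T$ (Proposition~\ref{prop:shi}) forces $\hat w=w$, i.e. $w$ is balanced. By Lemma~\ref{small_lemma}, a balanced abacus is exactly one whose level vector is antisymmetric, $\beta_i=-\beta_{2n+1-i}$. Since $\mathsf T$ is a bijection on all of $\aA_+(A_{2n-1})$, it restricts to a bijection between the self-conjugate tableaux and the $w\in\mincos{A}{2n-1}$ with antisymmetric level vector, which is the assertion.
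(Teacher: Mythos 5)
Your argument is correct, and it takes a genuinely different route from the paper's. The paper proves the forward implication by the same kind of computation you perform (evaluating $k_{2n-j,\,2n-i}$ from \eqref{1kij} for a balanced abacus), but it handles the converse by induction on $n$: it strips the first row and column of a self-conjugate tableau, invokes the inductive hypothesis on the resulting $A_{2n-3}$ tableau, performs a shifting of the residues that the authors themselves flag as ``somewhat subtle,'' and then compares the first-row sum $R_1$ with the first-column sum $C_1$ to pin down the outermost pair $r_1^{\ell_1},r_{2n}^{\ell_{2n}}$. You instead promote the forward computation to the general intertwining identity $\mathsf T(\hat w)=\mathsf T(w)^{\mathsf{transpose}}$ for the order-reversing involution $\hat w$ induced by $a\mapsto 2n+1-a$, of which the paper's forward calculation is precisely the fixed-point case $\hat w=w$; the converse then falls out of injectivity of $\mathsf T$ (Proposition~\ref{prop:shi}) with no induction and no shifting argument, and Lemma~\ref{small_lemma} translates ``balanced'' into ``antisymmetric level vector'' exactly as in the paper. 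What your approach buys is a shorter, symmetric proof in which both directions come from one identity; what it costs is the need to verify carefully that $\hat w$ is again a sorted abacus in $\mincos{A}{2n-1}$ (residues exhaust $\{1,\dots,2n\}$, levels sum to zero, window sums to $\binom{2n+1}{2}$), which you do. The only point worth spelling out in a final write-up is the index bookkeeping $\hat r_p=2n+1-r_{2n+1-p}$, $\hat\ell_p=-\ell_{2n+1-p}$ coming from re-sorting after the order-reversing map, since the whole computation of $\hat k_{ij}$ hinges on it; as stated, that step checks out.
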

\begin{proof}
Let  $w\in\mincos{A}{2n-1}$
with antisymmetric level vector.
In view of Lemma \ref{small_lemma},
 $w$ has balanced abacus diagram i.e., it can be written as
$[r_1^{\ell_1},\ldots,r_n^{\ell_n},r_{n+1}^{-\ell_{n}},\ldots,r_{2n}^{-\ell_1}]$.
 Thus, when applying \eqref{1kij}
we have
  \begin{align*}
  k_{2n-j\;2n-i}&=
  \begin{cases}
  \ell_{2n-i+1}-\ell_{2n-j},&\mbox{ if }\;\;\; r_{2n-j}< r_{2n-i+1}\\
  \ell_{2n-i+1}-\ell_{2n-j}-1,&\mbox{ if }\;\;\;r_{2n-j}>r_{2n-i+1}
  \end{cases}\\
  &=
  \begin{cases}
  -\ell_{i}-(-\ell_{j+1}),&\hspace{0.2cm}\mbox{ if}
  \;\;\;2n+1-r_{j+1}<2n+1-r_{i}\\
  -\ell_{i}-(-\ell_{j+1})-1,&\hspace{0.2cm}\mbox{ if } \;\;\;2n+1-r_{j+1}>
  2n+1-r_{i}
  \end{cases}\\
  &=
  \begin{cases}
  \ell_{j+1}-\ell_{i},&\hspace{0.9cm}\mbox{ if }\;\;\; r_{i}< r_{j+1}\\
  \ell_{j+1}-\ell_i-1,&\hspace{0.9cm}\mbox{ if }\;\;\;r_{i}>r_{j+1}
  \end{cases}\\
  &=k_{ij},
  \end{align*}
  which means that  ${\sf T}(w)$ is self-conjugate.

  To prove the reverse, we use induction on $n$, the claim for $A_1$ being 
  trivial. Next, we assume that our claim holds for the group $A_{2n-3}$ and we 
  prove it for $A_{2n-1}$. To this end, consider a self-conjugate tableau $T$ 
  of  an alcove $w\aA_0\in\aA_+(A_{2n-1})$. Let $T'$ be the tableau we obtain 
  from $T$ after  deleting its first row and column. Clearly, $T'$ is a 
  self-conjugate Shi tableau of an alcove $w'\aA_0$ in $\aA_+(A_{2n-3})$ and 
  induction implies that we can write its abacus diagram as
  $w'=[r_2^{\ell_2},\ldots,{r}_{n}^{\ell_{n}},
  {r}_{n+1}^{-\ell_n},\ldots,{r}_{2n-1}^{-\ell_2}]$, where
  $\{{r}_2,\ldots,{r}_{2n-2}\}=\{1,\ldots,2n-1\}$ and 
  ${r}_i+{r}_{2n+1-i}=2n-1$. 
  By an appropriate shifting of the $r_i$'s 
  \footnote{This claim is somewhat subtle. Induction implies that 
  $\{r_2,\ldots,r_{2n-2}\}=\{1,\ldots,2n-1\}$ with $r_i+r_{2n+1-i}=2n-1$
    for all $2\leq{i}\leq{n-1}$. Then, depending on the value of $r_1$, there 
    is a unique way to shift the above pairs of $r_i$'s, either by  
    $r_i\leftarrow{}r_{i}+1$  and  $r_{2n+1-i}\leftarrow{}r_{2n+1-i}+1$  \,or\, 
    by $r_i\leftarrow r_{i}$  and $r_{2n+1-i}\leftarrow{}r_{2n+1-i}+2$,  so 
    that 
    $\{r_1,\ldots,r_{2n}\}=\{1,\ldots,2n+1\}$, $r_i+r_{2n+1-i}=2n+1$ for all 
    $1\leq{i}\leq{n}$ and again ${\sf T}(w')=T'$.
    }, we can  assume that 
  $w=[r_1^{\ell_1},r_2^{\ell_2},\ldots,r_{n}^{\ell_{n}},
  r_{n+1}^{-\ell_n},\ldots,r_{2n-1}^{-\ell_2},r_{2n}^{\ell_{2n}}]$,
  where $r_i+r_{2n+1-i}=2n+1$ for all $2\leq{i}\leq{n-1}$,
  i.e., that the abacus of $w$ is balanced
  except possibly from the pair consisting of its first and last entry.
  Since the entries of the base window sum up to $1+\cdots+2n=n(2n+1)$,
  we deduce that
  \begin{align}
  r_1^{\ell_1}+r_{2n}^{\ell_{2n}}=2n+1.
  \label{r1r2n}
  \end{align}
  Our goal is to show that  $r_1+r_{2n}=2n+1$ and $\ell_1+\ell_{2n}=0$.
  To this end, for $2\leq{i}\leq{2n}$ we set $\epsilon_i=1$ if $r_1>r_i$ and
  $\epsilon_i=0$ otherwise. In view of \eqref{1kij},
  the sum of the entries of the first row of $T$ is
  \begin{align}
  R_1 = \sum_{i=1}^{2n}k_{1i}& = (\ell_{2n}-\ell_1-\epsilon_{2n})
  + \sum_{i=2}^n (-\ell_i-\ell_1-\epsilon_i)
  + \sum_{i=1}^{n-1} (\ell_i-\ell_1-\epsilon_{i+n})\notag\\
  &=(\ell_{2n}-\ell_1)-2(n-1)\ell_1-(r_1-1)
  \label{R1b}
  \end{align}
  where, in the last equation,  we used the fact that
  $\sum_{i}\epsilon_i=|\{i: r_i<r_1\}|=r_1-1$.
  Analogously, for $1\leq{i}\leq{2n-1}$ we set
  $\epsilon'_i=1$ if $r_i > r_{2n}$ and $\epsilon'_i=0$ otherwise.
  In view of \eqref{1kij},
  the sum of the entries of the first column  of $T$ is
  \begin{align}
  C_1 = \sum_{j=1}^{2n}k_{j\,2n}& = (\ell_{2n}-\ell_1-\epsilon'_1)
  + \sum_{j=2}^n (\ell_{2n}-\ell_j-\epsilon'_i)
  + \sum_{j=2}^n (\ell_{2n}-\ell_j-\epsilon'_{i+n})\notag\\
  &=(\ell_{2n}-\ell_1)-2(n-1)\ell_{2n}-(2n-r_{2n})
  \label{C1b}
  \end{align}
  where again, in the last equation,  we used the fact
  that $\sum_{j}\epsilon'_j=|\{j:r_j>r_{2n}\}|=2n-r_{2n}$.
  Since the tableau $T$ is self-conjugate we have that  $R_1=C_1$
  and thus, using base-level notation for the expressions  \eqref{R1b} and
  \eqref{C1b}, we conclude that
  $r_1^{\ell_1}+r_{2n}^{\ell_{2n}}=2\ell_1+2\ell_{2n}+1+2n$.
  In view of \eqref{r1r2n}, this implies that $\ell_1+\ell_{2n}=0$
  and thus $r_1+r_{2n}=2n+1$.
\end{proof}

As we have already pointed out, in Proposition \ref{self_conj_bijec} we recover
a slightly modified description of  the affine group $\widetilde C_n$,
compared to that given in Section \ref{sec:1line}.
The difference is that, in the above proposition,
 we no longer require $w(0)=0$ and thus all
symmetry is encoded  in an abacus diagram of size $2n$
i.e., in a subgroup of $\widetilde A_{2n-1}$.
This modification arises naturally, since
the images through the map $\mathsf T$ are
tableaux of the required size and symmetry.
Thus, from now on we identify dominant alcoves in \shi{C_n}
with minimal length coset representatives $w\in\mincos{A}{2n-1}$
whose abacus diagram is balanced i.e.,
$w=[r_1^{\ell_1},\ldots,r_n^{\ell_n},r_{n+1}^{-\ell_{n}},\ldots,r_{2n}^{-\ell_1}]$
 and $r_i+r_{2n+1-i}=2n+1$ for all $i$.

\subsection{Criteria for $m$-minimality}
\label{m-min-criteria}

As we mentioned in Section \ref{sec:mShi}, each dominant region in
\mshi{\Phi} is uniquely represented by its $m$-minimal alcove.
This allows us to identify the set of regions in $\rR_+^m(\Phi)$ with that
of $m$-minimal alcoves in $\aA_+(\Phi)$, for which we can apply  the
bijections of  Section \ref{sec:shivsbr} (involving  Shi tableaux and
abacus diagrams). However, to do this, we need to distinguish $m$-minimal
among all alcoves in $\aA_+(\Phi)$. Proposition \ref{self_conj_bijec} implies
that it suffices to formulate criteria for $m$-minimality
in the type $A$ case, since
$m$-minimal alcoves in \shi{C_n} correspond to
$m$-minimal alcoves in \shi{A_{2n-1}} having self-conjugate Shi tableau.

The following theorem is equivalent to \cite[Theorem 4.5]{ahj-rcosc-13}, where
criteria for $m$-minimality are expressed in terms of flush abacuses.
To make the paper self-contained, we prove it using our own setup.
\begin{theorem}
 \label{thm:mconditions}
	Let $\alc=w\alc_0\in \alc_+(A_{n-1})$ be a dominant alcove with level vector
   $\vec{n}(w)=(\beta_1,\ldots,\beta_{n}).$
	Then,  $\alc$ is the $m$-minimal alcove of a dominant region
	$\rR$  in \mshi{A_{n-1}} if and only if:
	\begin{itemize}
	\item[\rm(i)] $\beta_{i+1}-\beta_i\leq  m $ for all $1\leq i\leq n-1$, and
	\item[\rm(ii)] $\beta_1-\beta_{n}-1\leq m.$
	\end{itemize}
\end{theorem}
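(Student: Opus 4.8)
The plan is to reduce $m$-minimality to a purely local condition on the walls of the alcove $\alc=w\alc_0$, and then to read off those walls from the level vector. Recall from Section~\ref{sec:mShi} that every dominant region $\rR$ has a unique alcove $\alc_\rR$ closest to the origin, and that $\rR$ is convex (an intersection of half-spaces). Convexity guarantees that the gallery distance to $\alc_0$, restricted to the alcoves of $\rR$, attains its minimum only at $\alc_\rR$; hence $\alc$ is $m$-minimal if and only if it admits no neighbouring alcove that is both closer to the origin and still contained in $\rR$. First I would make this precise: the neighbours of $\alc$ are the $n$ alcoves obtained by crossing its $n$ walls, one for each simple reflection $s_0,\dots,s_{n-1}$, and crossing a wall $H_{\gamma,c}$ toward $\alc_0$ keeps us inside $\rR$ exactly when $H_{\gamma,c}$ is not a hyperplane of $\mnshi{m}{A_{n-1}}$, i.e. when $c>m$ (if instead $1\le c\le m$ the crossing leaves $\rR$, and $c=0$ leaves the dominant chamber altogether). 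Thus $\alc$ is $m$-minimal if and only if no toward-origin wall of $\alc$ is of the form $H_{\gamma,c}$ with $c>m$.

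The heart of the argument is then to identify, for each simple reflection, the single hyperplane crossed by the corresponding move and to compute its level $c$ in terms of $\vec{n}(w)=(\beta_1,\dots,\beta_n)$. I would do this by transporting each move to the abacus. For a finite reflection $s_i$ ($1\le i\le n-1$) the elementary move interchanges the levels $\beta_i$ and $\beta_{i+1}$ of the level vector; it is a step toward the origin precisely when $\beta_{i+1}>\beta_i$, and then, applying the Shi-coordinate formula \eqref{1kij} to the abacus before and after the swap, exactly one tableau entry drops by one and the crossed level equals $c=\beta_{i+1}-\beta_i$. For the affine reflection $s_0$ the move acts cyclically, sending $(\beta_1,\dots,\beta_n)$ to $(\beta_n+1,\beta_2,\dots,\beta_{n-1},\beta_1-1)$; it is a step toward the origin precisely when $\beta_1-\beta_n-1\ge 1$, and the same computation with \eqref{1kij} shows the crossed level is $c=\beta_1-\beta_n-1$. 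The extra $-1$ here is exactly the contribution of the second case $r_i>r_{j+1}$ of \eqref{1kij}, which is forced for the top root of a sorted window; isolating this $-1$ is the point I expect to require the most care.

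Combining the two steps finishes the proof. A toward-origin move survives inside $\rR$ if and only if its crossed level exceeds $m$, so $\alc$ fails to be $m$-minimal if and only if $\beta_{i+1}-\beta_i>m$ for some $1\le i\le n-1$, or $\beta_1-\beta_n-1>m$; negating this is precisely conditions (i) and (ii). The directions $s_i$ that are \emph{not} steps toward the origin need no separate treatment: in those cases $\beta_{i+1}-\beta_i\le 0\le m$ (respectively $\beta_1-\beta_n-1\le 0\le m$), so the corresponding inequality holds automatically and is consistent with $\alc$ having no lowering move there. The main obstacles I anticipate are the two bookkeeping points just mentioned, namely justifying rigorously that the local criterion ``no lowering move remains in $\rR$'' is equivalent to global $m$-minimality (via convexity of $\rR$), and pinning down the crossed level for $s_0$ together with its characteristic $-1$; by contrast, the finite reflections are a routine application of \eqref{1kij}.
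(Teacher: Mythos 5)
Your proposal is correct and is essentially the paper's own argument: the paper proves the equivalent abacus reformulation (Theorem~\ref{thm:mconditions2}) by enumerating the same $n$ adjacency moves (swapping the levels of the consecutive bases $t$ and $t+1$, plus the affine swap of the bases $n$ and $1$), computing via \eqref{1kij} the unique Shi coordinate that changes, and invoking Corollary~\ref{cor:same_region} to decide whether the neighbouring alcove stays in the region. Your repackaging of this as a single local criterion with crossed levels $\beta_{i+1}-\beta_i$ and $\beta_1-\beta_n-1$, and your explicit convexity justification of the local-to-global step (which the paper asserts without further comment), are only presentational differences.
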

In the current setup, it its easier to prove Theorem \ref{thm:mconditions2},
which is a  reformulation of Theorem \ref{thm:mconditions}.
In the proof, we repeatedly use Corollary \ref{cor:same_region} of the
Appendix, which allows us to compare the face defining inequalities of
a region in \mshi{\Phi} with those of its $m$-minimal alcove.
\begin{theorem}
   \label{thm:mconditions2}
	Let $\alc=w\alc_0\in \alc_+(A_{n-1})$ be a dominant alcove with \bracket
	$w=[r_1^{\ell_1},\ldots,r_{n}^{\ell_{n}}].$
	Then, $\alc$ is the $m$-minimal alcove of a dominant region
	$\mathcal R\in$ \mshi{A_{n-1}} if and only if:
	\begin{itemize}
    \item[\rm(i)] $\ell_j-\ell_{i}\leq{m}$ for all 	$1\leq{i}\leq{j}\leq{n}$
    with     $r_j=r_{i}+1\leq{n}$,\, and
 	\item[\rm(ii)] $\ell_j-\ell_{i}-1\leq m$ if $r_j=1$ and $r_{i}=n.$
	\end{itemize}
\end{theorem}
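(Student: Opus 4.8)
The plan is to reduce $m$-minimality to a statement about the \emph{floors} of $\alc$ and then to read those floors off the abacus. Recall that $\alc=w\alc_0$ is the $m$-minimal alcove of its region $\rR$ exactly when no alcove of $\rR$ lies strictly closer to $\alc_0$; equivalently, no facet hyperplane of $\alc$ that separates $\alc$ from $\alc_0$ (a floor) can be crossed, toward $\alc_0$, without leaving $\rR$. Since in the dominant chamber the walls of $\mshi{A_{n-1}}$ are exactly the hyperplanes $H_{\gamma,k}$ with $\gamma\in\Phi^+$ and $1\le k\le m$, crossing a floor $H_{\gamma,k}$ toward $\alc_0$ keeps us in $\rR$ if and only if $k>m$. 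Invoking Corollary~\ref{cor:same_region} to guarantee that $\rR$ and its $m$-minimal alcove have the same separating walls and the same defining inequalities along them, this shows that $\alc$ is $m$-minimal if and only if every floor $H_{\gamma,k}$ of $\alc$ satisfies $k\le m$. The whole problem thus reduces to listing the floors of $\alc$ together with their levels.

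Next I would determine the floors from the abacus. The simplex $\alc$ has at most $n$ facets, one per affine simple direction, and a neighbour closer to $\alc_0$ is obtained by crossing a single hyperplane, hence by lowering exactly one Shi coordinate. I claim the floors occur, in level-vector terms $\vec{n}(w)=(\beta_1,\dots,\beta_n)$, at the levels $\beta_{s+1}-\beta_s$ for $1\le s\le n-1$ and at $\beta_1-\beta_n-1$, a floor being genuinely present only when the value is $\ge 1$. To see this for a finite direction, look at the two beads whose bases are $s$ and $s+1$; if they appear in the order $i\le j$ in the sorted abacus, i.e. $r_j=r_i+1$, then $\ell_i=\beta_s$, $\ell_j=\beta_{s+1}$, so necessarily $\beta_{s+1}\ge\beta_s$, and since $[r_i>r_j]=0$ formula \eqref{1kij} gives that the separating facet in this direction is the entry $k_{i,\,j-1}=\ell_j-\ell_i=\beta_{s+1}-\beta_s$; when the beads appear in reverse order this value is $\le 0$ and no separating floor arises. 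The affine direction is handled identically by the wrap-around pair of beads with bases $n$ and $1$: here $[r_i>r_j]=1$, and \eqref{1kij} produces the extra $-1$, yielding level $k_{i,\,j-1}=\ell_j-\ell_i-1=\beta_1-\beta_n-1$. A short verification that these $n$ directions exhaust the facets of $\alc$ (so there are no further floors) closes this step.

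Finally I would combine the two steps: by the reduction, $\alc$ is $m$-minimal iff all floor levels are $\le m$, that is iff $\beta_{s+1}-\beta_s\le m$ for $1\le s\le n-1$ and $\beta_1-\beta_n-1\le m$. Transcribed to the abacus these are precisely conditions~(i) and~(ii): the clause ``$i\le j$ with $r_j=r_i+1$'' selects exactly the consecutive-base pairs with $\beta_{s+1}\ge\beta_s$ (the remaining consecutive pairs give differences $\le 0\le m$ automatically), while the clause ``$r_j=1$, $r_i=n$'' selects the wrap-around pair; this yields both implications simultaneously, and since the argument never leaves the type $A$ setting it also agrees with the level-vector form Theorem~\ref{thm:mconditions}. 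The hard part I anticipate is the floor computation of the middle paragraph: one must identify precisely which supporting hyperplanes are floors rather than merely facets, directly from the sorted abacus, correctly produce the $-1$ shift in the affine direction through the Iverson term of \eqref{1kij}, and confirm that no floors lie beyond the $n$ listed. This is exactly the point where Corollary~\ref{cor:same_region}, by letting us pass freely between the face inequalities of $\rR$ and those of $\alc$, carries the argument.
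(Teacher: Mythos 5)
Your proposal is correct and follows essentially the same route as the paper: the published proof is precisely an analysis of the adjacent alcoves obtained by exchanging consecutive-base beads (including the wrap-around pair $n,1$), computing the single changed Shi coordinate via \eqref{1kij} and deciding same-region membership with Corollary~\ref{cor:same_region}, which is exactly your floor computation. Your only departure is organizational --- you unify both implications under the criterion that every floor has level at most $m$, whereas the paper argues the forward direction by exhibiting a closer alcove in the same region when a condition fails and the reverse by checking all adjacent alcoves --- and both treatments leave the same two points (that these $n$ bead exchanges exhaust the facets, and that local minimality among neighbours implies minimality in the region) at the same level of detail.
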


\begin{proof}
	Let $\alc=w\alc_0$  be the $m$-minimal alcove of a dominant region $\rR$ in
	\mshi{A_{n-1}} with	 \bracket $w=[r_1^{\ell_1},\ldots,r_n^{\ell_n}]$
	and assume that one of the conditions in the statement of the lemma is
	violated. We separate cases:

  {\rm(i)}  If there exist $r_{\nu},r_{\mu}$ with
	$r_{\nu}=r_{\mu}+1\leq{}n$ for which $\ell_{\nu}-\ell_{\mu}>m$
	then, since the entries in the \bracket diagram are sorted,	we can write
	$w=[r_1^{\ell_1},\ldots,r_{\mu}^{\ell_{\mu}},\ldots,r_{\nu}^{\ell_{\nu}},
	\ldots,r_n^{\ell_n}]$. Consider  the affine permutation
	$w'=[r_1^{\ell_1},\ldots,r_{\nu}^{\ell_{\mu}},\ldots,r_{\mu}^{\ell_{\nu}},
	\ldots,r_n^{\ell_n}]$ where we have exchanged $r_{\nu}$ and $r_{\mu}$
   without their levels, and notice that $w'$ is still sorted.
  Thus, $w'\alc_0$ is a dominant alcove.
	Moreover, from \eqref{1kij} one can check that  its Shi tableau
	$\{k'_{ij}:1\leq{}i\leq{}j\leq{}n-1\}$ has all
	$k_{ij}=k'_{ij}$ except from the entry $k'_{\mu\,\nu-1}$
	for which $k'_{\mu\,\nu-1}=\ell_{\nu}-\ell_{\mu}-1=k_{\mu\,\nu-1}-1$.
  Since we have assumed that $\ell_{\nu}-\ell_{\mu}-1\geq{m}$,
  Corollary \ref{cor:same_region} implies that $w'\mathcal A_0$
	and $w\mathcal A_0$ lie in the same region $\mathcal R$.

	{\rm(ii)} If $r_{\mu}=n$, $r_{\nu}=1$ and
	$\ell_{\nu}-\ell_{\mu}-1>m$ then, since the entries in the bracket
	are sorted, we can write
		$w=[r_1^{\ell_1},\ldots,n^{\ell_{\mu}},\ldots,1^{\ell_{\nu}},
		\ldots,r_n^{\ell_n}]$. Next, notice that for all $r_i^{\ell_i}$'s
		between $n^{\ell_{\mu}}$ and $1^{\ell_{\nu}}$ it is
		$\ell_{\mu}<\ell_i<\ell_{\nu}$ (otherwise, the entries of the bracket
		would not be in increasing order). Thus, the bracket of
    $w'=[r_1^{\ell_1},\ldots,1^{\ell_{\mu}+1},\ldots,n^{\ell_{\nu}-1},
		\ldots,r_n^{\ell_n}]$ is also sorted and hence
		$w'\alc_0$ is a dominant alcove.
		Moreover, its Shi tableau $\{k'_{ij}:1\leq{}i\leq{}j\leq{}n-1\}$
			has all $k_{ij}=k'_{ij}$ except for $k'_{\mu\;\nu-1}$ for
			which $k'_{\mu\;\nu-1}=k_{\mu\;\nu-1}-1=\ell_{\nu}-\ell_{\mu}-2\geq{m}.$
			As before, Corollary \ref{cor:same_region} implies that
      both $w'\alc_0$ and $w\alc_0$ lie in the same region $\rR$.

	In both cases we have found an alcove $w'\alc_0$ in $\rR$
	having all Shi coordinates equal to those of $w\alc_0$ except for one
	which is smaller. This  contradicts the fact
	that $w\alc_0$ is the minimal alcove of $\rR$.
  \smallskip

  	To prove the reverse, we assume that  the conditions in the statement of
  	the theorem  hold for some $w$ and we prove that all alcoves adjacent to
  $w\alc_0$  i.e., all 	those  which share a facet with $w\alc_0$, either lie
  in another region or have larger coordinates. This immediately implies that
  $w\alc_0$ is 	$m$-minimal.

  Two alcoves $w\aA_0,w'\aA_0$ are adjacent if they have all their Shi 
  coordinates the same, except for one in which they differ by $\pm 1$. 
  Arguing as   above, this can only happen   when, in the abacus digram of $w$, 
  we either exchange  pairs $r_{\mu},r_{\nu}$ with $r_{\nu}=r_{\mu}+1\leq{n}$ 
  or the pair $n$,$1$
  with appropriate alteration of their levels.
  This indicates the following four cases, in which
  $w\aA_0$ and $w'\aA_0$ differ by exactly one Shi
  coordinate. \\
  {\em Case 1:} If $r_{\nu}=r_{\mu}+1\leq{n}$ then,  from our assumption,
  $\ell_{\nu}-\ell_{\mu}\leq{m}$. We further distinguish cases.\\
  If $\ell_{\mu}\leq\ell_{\nu}$  then we switch from
  $w=[r_1^{\ell_1},\ldots,r_{\mu}^{\ell_{\mu}},\ldots,r_{\nu}^{\ell_{\nu}},
  \ldots,r_n^{\ell_n}]$ to  $w'=$
  $
  [r_1^{\ell_1},\ldots,r_{\nu}^{\ell_{\mu}},\ldots,r_{\mu}^{\ell_{\nu}},
    \ldots,r_n^{\ell_n}]$.
    The only different Shi coordinate of the acloves $w\aA_0$ and $w'\aA_0$
    is  $k_{\mu\;\nu-1}=\ell_{\nu}-\ell_{\mu}\leq{m} $
    which becomes $ k'_{\mu\; \nu-1}=\ell_{\nu}-\ell_{\mu}-1<m$.
   In view of Corollary \ref{cor:same_region} we deduce that $w'\aA_0$ and
   $w\aA_0$ lie in  different regions.\\
    If $\ell_{\nu}<\ell_{\mu}$ then we switch from $
   [r_1^{\ell_1},\ldots,r_{\nu}^{\ell_{\nu}},\ldots,r_{\mu}^{\ell_{\mu}},
 \ldots,r_n^{\ell_n}]$ to 
 $ [r_1^{\ell_1},\ldots,r_{\mu}^{\ell_{\nu}},\ldots,r_{\nu}^{\ell_{\mu}},
 \ldots,r_n^{\ell_n}] $.
 The only different Shi coordinate of the acloves $w\aA_0$ and $w'\aA_0$
 is $ k_{\mu\;\nu-1}=\ell_{\mu}-\ell_{\nu}-1$ which becomes
 $k'_{\mu\;\nu-1}=\ell_{\mu}-\ell_{\nu}$.  Since
 $k'_{\mu\;\nu-1}>k_{\mu\;\nu-1}$, the
 alcove $w'\aA_0$ does not lie closer to the origin than $w\aA_0$. \\
  {\em Case 2:} If $r_{\nu}=1 $ and $r_{\mu}=n $
     then,  from our assumption,  $\ell_{\nu}-\ell_{\mu}-1\leq{m}$.
     We further distinguish cases. \\
     If $\ell_{\mu}+1<\ell_{\nu}$ then,  we switch from the abacus diagram 
     $w=[r_1^{\ell_1},\ldots,n^{\ell_{\mu}},\ldots,1^{\ell_{\nu}},
     \ldots,r_n^{\ell_n}]$ to 
     $w'=[r_1^{\ell_1},\ldots,1^{\ell_{\mu}+1},\ldots,n^{\ell_{\nu}-1},
     \ldots,r_n^{\ell_n}]$.
     The only different Shi coordinate in the acloves $w\aA_0$ and $w'\aA_0$
     is  $k_{\mu\;\nu-1}=\ell_{\nu}-\ell_{\mu}-1\leq{m}$
     which  becomes $ k'_{\mu\;\nu-1}=(\ell_{\nu}-1)-(\ell_{\mu}+1)
     =\ell_{\nu}-\ell_{\mu}-2<m$.  In view of Corollary \ref{cor:same_region},
     $w'\aA_0$
     and $w\aA_0$ lie in different  regions.\\
     If $\ell_{\mu}+1\geq \ell_{\nu}$
     then, we switch from the abacus diagram 
  $w=[r_1^{\ell_1},\ldots,1^{\ell_{\nu}},\ldots,n^{\ell_{\mu}},
  \ldots,r_n^{\ell_n}]$ to 
  $w'=[r_1^{\ell_1},\ldots,n^{\ell_{\nu}-1},\ldots,1^{\ell_{\mu}+1},
  \ldots,r_n^{\ell_n}]$.
  The only different Shi coordinate in the alcoves $w\aA_0$ and $w'\aA_0$
  is  $k_{\mu\;\nu-1}=\ell_{\mu}-\ell_{\nu}$
  which becomes $k'_{\mu\;\nu-1}=(\ell_{\mu}+1)-(\ell_{\nu}-1)-1=
  \ell_{\mu}-\ell_{\nu}+1$. Since $k'_{\mu\;\nu-1}>k_{\mu\;\nu-1}$, the
   alcove $w'\aA_0$ does not lie closer to the origin than $w\aA_0$.
\end{proof}

\subsection{The bijection ${\sf Bj}_1$}
\label{sec:proj}

We now have all the ingredients to describe the map ${\sf Bj}_1$ and prove that
it is a bijection.
Let $[w(1),\dots,w(n)]$ be the base window of some
$w\in\mincos{A}{n-1}$. For each $k\in \mathbb{Z}$
we define the \emph{ $k$-shift of $w$}, denoted by
$\w{k}$,  as the ordered collection of integers:
\begin{equation}
\label{equ:wk}
\w{k}:=(w(1)+k,\dots,w(n)+k).
\end{equation}
  As will be shown below, there is a natural way to ``expand" $\w{k}$
  in order to   obtain   minimal length coset representatives in
  $\widetilde{A}_{2n-1}/A_{2n-1}$. To
  describe our expansion, let $r_i:=(w(i)+k)\!\!\!\mod{n}$ be the base,
  $\ell_i:=\fl{\tfrac{w(i)+k}{n}}$ be the  level of each  $w(i)+k$ and  rewrite
  \eqref{equ:wk} as:
\begin{equation}
 \w{k}=[r_1^{\ell_1},\ldots,r_n^{\ell_n}].
\end{equation}
Next,  notice that $\{r_1,\ldots,r_n\}=\{1,\ldots,n\}$.
Indeed, condition  \eqref{equ:bracket}  implies that,  for each
base window it is
$\{w(1)\!\!\!\mod n,\ldots,w(n)\!\!\!\mod n\}=\{1,\ldots,n\}$,
hence also $\{(w(1)+k)\!\!\!\mod n,\ldots,$$\,(w(n)+k)\!\!\!\mod
n\}$$=\{1,\ldots,n\}$.
This allows us to define the level vector of $\w{k}$
as we did in the case of base windows,
regardless of the fact that the levels do not sum up to zero.
Now, we are in a position to formulate the definition
of our expansion.
\begin{definition}
 \label{def:antisC}
     	Let $w$ be an ordered collection of integers
     	$[r_1^{a_1},\ldots,r_n^{a_n}]$
	such that $\{r_1,\ldots,r_n\}=\{1,\ldots,n\}$.
	The \emph{antisymmetric expansion $\alpha(w)$ of $w$},
	 is  the sorted array  $[\bar{w}(1),\ldots,\bar{w}(2n)],$
   with elements:
   \begin{equation}
   \label{equ:C}
   \{\bar{w}(1)<\bar{w}(2)<\ldots<\bar{w}(2n)\}=
   \{  r_i^{a_i},(2n+1-r_i)^{-a_i} \mbox{ for } 1\leq{i}\leq{n}\}.
   \end{equation}
\end{definition}
\bigskip
%
%

The next lemma shows that for each $w\in\mincos{A}{n-1}$ and $k\in\mathbb{Z}$, 
the antisymmetric expansion of its $k$-shift produces elements of 
$\mincos{A}{2n-1}$ with balanced abacus.
\begin{lemma}
\label{lem:antiC}
  Let $w\in\mincos{A}{n}$ with level vector
  $(\beta_1,\ldots,\beta_n)$. Then, for each $k\in\mathbb{Z}$,  the
  antisymmetric expansion  $\alpha({\w{k}})$ of its $k$-shift, is a minimal
  length  coset representative in $\mincos{A}{2n-1}$
  whose abacus diagram is balanced.
\end{lemma}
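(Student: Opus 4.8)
The plan is to read every base--level symbol $r^{a}$ occurring in $\alpha(\w{k})$ in the rank $2n-1$ convention $r^{a}=r+2na$ (so the ``expansion'' is really a reinterpretation of the levels in the doubled modulus $2n$), and then to verify the two assertions---membership in $\mincos{A}{2n-1}$ and balancedness---directly from the list in Definition \ref{def:antisC}. Write $\w{k}=[r_1^{\ell_1},\ldots,r_n^{\ell_n}]$ with $r_i=(w(i)+k)\bmod n$ and $\ell_i=\fl{(w(i)+k)/n}$. I will use the fact, recorded just before the lemma, that $\{r_1,\ldots,r_n\}=\{1,\ldots,n\}$; this is exactly the hypothesis under which Definition \ref{def:antisC} applies, so $\alpha(\w{k})$ is well defined.

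First I would show that $\alpha(\w{k})$ is a legitimate abacus of $\widetilde{A}_{2n-1}$. Its entries are the $2n$ integers $\{\,r_i+2n\ell_i,\ (2n+1-r_i)-2n\ell_i : 1\le i\le n\,\}$, whose bases (residues in $\{1,\ldots,2n\}$ modulo $2n$) are $\{r_i\}\cup\{2n+1-r_i\}=\{1,\ldots,n\}\cup\{n+1,\ldots,2n\}=\{1,\ldots,2n\}$. Thus the bases are pairwise distinct and exhaust $\{1,\ldots,2n\}$, which is the first requirement of \eqref{equ:bracket} for rank $2n-1$ and simultaneously guarantees that the periodic extension $\bar{w}(x+2n)=\bar{w}(x)+2n$ is a genuine $\mathbb{Z}$-permutation. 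The levels are $\{\ell_i\}\cup\{-\ell_i\}$, which sum to $0$ irrespective of whether $\sum_i\ell_i=0$; this is the second requirement of \eqref{equ:bracket}. Hence $\alpha(\w{k})\in\widetilde{A}_{2n-1}$, and since its window is sorted by construction, it is a minimal length coset representative, i.e. $\alpha(\w{k})\in\mincos{A}{2n-1}$.

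For balancedness, the key point is that the defining set $S$ is invariant under the order-reversing involution $\iota\colon a\mapsto (2n+1)-a$ of $\mathbb{Z}$. A one-line computation gives $\iota(r_i^{\ell_i})=(2n+1-r_i)^{-\ell_i}$ (the level terms cancel because $\iota$ does not involve the modulus), so $\iota$ merely interchanges the two members of each defining pair and therefore fixes $S$ setwise. Since $\iota$ reverses order, it sends the $i$-th smallest entry of $S$ to the $i$-th largest, that is $\bar{w}(2n+1-i)=(2n+1)-\bar{w}(i)$, whence $\bar{w}(i)+\bar{w}(2n+1-i)=2n+1$ for all $i$. Comparing bases and levels on the two sides yields $r_i+r_{2n+1-i}=2n+1$ together with the matching sign reversal of the levels, which is precisely the balanced form; equivalently, this exhibits the level vector of $\alpha(\w{k})$ as antisymmetric, so one may instead quote Lemma \ref{small_lemma}.

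I expect the whole argument to be bookkeeping, with no deep obstacle. The one genuinely delicate point---and the spot where a naive attempt fails---is the modulus: if the symbols $r_i^{\ell_i}$ are kept in the original modulus $n$, the two members of a pair can collapse to the same integer and $S$ stops being a valid window. Making explicit that the expansion reads the levels in modulus $2n$ is what simultaneously forces the bases to fill $\{1,\ldots,2n\}$ and makes $\iota$ act as stated; once this is fixed, both claims drop out immediately.
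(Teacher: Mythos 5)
Your proposal is correct and follows essentially the same route as the paper: you verify the two conditions of \eqref{equ:bracket} (bases exhausting $\{1,\ldots,2n\}$ and levels summing to zero in cancelling pairs) to get membership in $\mincos{A}{2n-1}$, and then derive balancedness from the antisymmetry of the expansion. Your order-reversing involution $a\mapsto 2n+1-a$ is just an inlined re-proof of the forward direction of Lemma~\ref{small_lemma}, which the paper cites instead (and which you also note could be quoted); your explicit remark about reinterpreting the levels in modulus $2n$ is a worthwhile clarification of a point the paper leaves implicit.
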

\begin{proof}
First, notice that if $\{r_1,\ldots,r_n\}=\{1,\ldots,n\}$
then $\{2n+1-r_1,\ldots,2n+1-r_n,r_1,\ldots,r_n\}=\{1,\ldots,2n\}$.
Also, by the definition of the antisymmetric expansion, the
levels in $\alpha(\w{k})$ sum up to $0$. Hence, both conditions in
\eqref{equ:bracket} are satisfied and thus $\alpha(\w{k})$ in an element of
$\widetilde A_{2n-1}$. Finally, by construction, the level vector of $\w{k}$
is antisymmetric and thus, in view of Lemma \ref{small_lemma}, its abacus is
balanced.
\end{proof}
The first step towards \bj\ is the following proposition, which shows how, from 
any alcove in $\aA_+(A_{n-1})$ and integer $k\in\mathbb{Z}$ we can uniquely 
define an alcove in $\aA_+(C_n)$. The forward direction of the bijection 
follows naturally from the $k$-shift. The reverse, which extracts   from a 
self-conjugate tableau of type $A_{2n-1}$, a type $A_{n-1}$ tableau 
appropriately shifted, is more complicated. 

\begin{proposition}
\label{prop:proj alc}
	The map $\psi:\mathcal{A}_+(A_{n-1})\times\mathbb{Z}\longrightarrow$
	$\mathcal{A}_+(C_n)$
	sending each pair $(w,k)$ to $\alpha(\w{k})$
	is a 	bijection.  Its 	inverse
	map sends each $\bar{w}$ to $(w,k)$ where:
	\begin{enumerate}[(i)]
	\item 	 $k$ is the sum of the lower-half of the
	level vector
	 $\vec{n}(\bar{w})=(\beta_1,\ldots,\beta_n,-\beta_n,\ldots,-\beta_1)$
	of $\bar{w}$, and
	\item  $w$ is the minimal length coset representative in
  $\widetilde A_{n-1}/A_{n-1}$	 with level vector
	  $$\vec{n}(w)=(\beta_{n-r+1}+\ell+1,\ldots,\beta_n+\ell+1,\beta_1+\ell,
	\ldots,\beta_{n-r}+\ell),$$
	 where $r=-k\!\!\!\mod{n}$ and
	$\ell=-\frac{k+r}{n}$.
	\end{enumerate}

\end{proposition}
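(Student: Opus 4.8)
The plan is to first settle well-definedness and then prove bijectivity by exhibiting $\psi$ as a composition of two transparent bijections, from which the inverse formulas fall out. Well-definedness needs nothing new: Lemma~\ref{lem:antiC} guarantees that $\alpha(\w{k})$ is a minimal length coset representative in $\mincos{A}{2n-1}$ with balanced abacus, and such representatives were identified (in the discussion following Proposition~\ref{self_conj_bijec}) with the dominant alcoves of $\mathcal{A}_+(C_n)$, so $\psi$ indeed lands in the stated target.

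The key observation is that $\psi$ factors as $(w,k)\mapsto \w{k}\mapsto \alpha(\w{k})$, and each arrow is a bijection. For the first arrow, I would show that $(w,k)\mapsto\w{k}$ is a bijection from $\mincos{A}{n-1}\times\mathbb{Z}$ onto the set of sorted arrays $[r_1^{\ell_1},\dots,r_n^{\ell_n}]$ with $\{r_1,\dots,r_n\}=\{1,\dots,n\}$: it is injective because two equal arrays have equal level sums, forcing first $k=k'$ and then $w=w'$, and it is surjective because subtracting the level sum $k=\sum_i\ell_i$ from such an array produces, via $\sum_i r_i=\binom{n+1}{2}$, a genuine base window (total $\binom{n+1}{2}$, residues $\{1,\dots,n\}$). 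For the second arrow, Definition~\ref{def:antisC} shows that $\alpha$ depends only on the level vector of its input and sends the level vector $(\beta_1,\dots,\beta_n)$ to the balanced abacus whose residue-$s$ entry has level $\beta_s$ for $1\le s\le n$; conversely every balanced abacus splits uniquely into antisymmetric pairs $\{s^{\beta_s},(2n+1-s)^{-\beta_s}\}$, so $\alpha$ is a bijection. Composing yields that $\psi$ is a bijection.

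To read off the inverse I would trace these two steps backwards. Given $\bar w$ with $\vec{n}(\bar w)=(\beta_1,\dots,\beta_n,-\beta_n,\dots,-\beta_1)$, the elements of its abacus with residue at most $n$ are exactly $\{1,\dots,n\}$ carrying levels $\beta_1,\dots,\beta_n$, and these are precisely the entries of $\w{k}$. Summing $w(i)+k=r_i+n\ell_i$ over $i$ and using $\sum_i w(i)=\binom{n+1}{2}$ together with $\{r_i\}=\{1,\dots,n\}$ gives $k=\sum_i\ell_i=\beta_1+\cdots+\beta_n$, the claimed sum of the lower half. To recover $w$ I would undo the shift: its base window consists of the numbers $j+n\beta_j-k$ for $1\le j\le n$; writing $-k=r+n\ell$ with $r=-k\bmod n$ and $\ell=-(k+r)/n$, I rewrite $j+n\beta_j-k=(j+r)+n(\beta_j+\ell)$ and split according to whether $j+r\le n$ or $j+r>n$. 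In the second case the residue drops by $n$ and the level gains an extra $1$, and collecting the level of each residue $s\in\{1,\dots,n\}$ produces exactly $\vec{n}(w)=(\beta_{n-r+1}+\ell+1,\dots,\beta_n+\ell+1,\beta_1+\ell,\dots,\beta_{n-r}+\ell)$.

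Finally I would confirm that this $\vec{n}(w)$ sums to $\sum_i\beta_i+n\ell+r=k+n\ell+r=0$, so $w$ is a legitimate element of $\mincos{A}{n-1}$, and that the two displayed formulas invert $\psi$ on the nose. I expect the main obstacle to be the bookkeeping in this last case-split: subtracting $k$ can carry a residue across a multiple of $n$, and it is precisely the dichotomy $j+r\le n$ versus $j+r>n$ that generates both the cyclic rotation of the $\beta$-indices and the $+1$ appearing in the first $r$ coordinates of $\vec{n}(w)$. Keeping the residue-versus-level arithmetic consistent there---and checking that the rotation is compatible with the antisymmetric pairing underlying $\alpha$---is the delicate part, while everything else reduces to routine base-level algebra.
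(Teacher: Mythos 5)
Your proposal is correct and follows essentially the same route as the paper: well-definedness via Lemma~\ref{lem:antiC}, recovery of $k$ as the level sum $\beta_1+\cdots+\beta_n$ from the requirement that the base window sums to $\binom{n+1}{2}$, and recovery of $\vec{n}(w)$ by writing $-k=r+n\ell$ and splitting on whether $j+r\le n$, exactly as in the paper's base-level computation. Your explicit factorization of $\psi$ into the two bijections $(w,k)\mapsto\w{k}$ and $\alpha$, with injectivity and surjectivity checked for each, is a slightly more systematic packaging of the same argument (the paper only constructs the inverse), but the underlying computations are identical.
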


\begin{proof}
 Lemma \ref{lem:antiC} ensures that the map in the statement of the
 proposition is well defined.
For the reverse,
 let $\bar{w}\alc_0$ be an alcove
 in $\aA_+(C_n)$ with level vector
 $\vec{n}(\bar{w})=(\beta_1,\ldots,\beta_n,-\beta_n,\ldots,-\beta_1)$.
 Reversing the procedure of  the antisymmetric expansion, we see that
 $\w{k}$ consists of the numbers $ \{1^{\beta_1},\ldots,n^{\beta_n}\}$.
 In order to shift from $\w{k}$ to its base window $w$,
 we have to subtract an integer $k$ form each $i^{\beta_i}$,
 so that the resulting set of numbers sum up to $1+\cdots+n$.
To determine $k$, we set $r:=-k\!\!\!\mod{n}$ and
  $\ell:=\frac{-r-k}{n}$, so that  in the base-level notation $-k=r^{\ell}$
  and we ask that:
  \begin{align}
   1+\cdots+n & = 1^{\beta_1}-k+\cdots+n^{\beta_n}-k
               \notag\\
              & = (1^{\beta_1}+r^{\ell})+\cdots+(n^{\beta_n}+r^{\ell})
               \notag\\
              &=(1+r)^{\beta_1+\ell}+(2+r)^{\beta_2+\ell}+\cdots+n^{\beta_{n-r}+\ell}
              +(n+1)^{\beta_{n-r+1}+\ell}+\cdots+(n+r)^{\beta_{n}+\ell}
               \notag\\
              &= (1+r)^{\beta_1+\ell}+(2+r)^{\beta_2+\ell}+\cdots+n^{\beta_{n-r}+\ell}
              +1^{\beta_{n-r+1}+\ell+1}+\cdots+r^{\beta_{n}+\ell+1}.
              \label{b}
  \end{align}
  The above equalities hold if  the sum of the levels of the numbers  in
  \eqref{b} is  $0$
 i.e., if $(\beta_1+\cdots+\beta_n)+n\ell+r=0$.
 The last is equivalent to  $k=\beta_1+\cdots+\beta_n$,
 which proves {\em (i)}. 
 It is also evident from \eqref{b} that the level vector of $w$
 is indeed the one described in item  {\em (ii)} of the proposition.
\end{proof}
\medskip

\begin{figure}[h!]
  \begin{center}
   \begin{tikzpicture}[scale=0.5]
  \begin{scope}[xshift=-2 cm,yshift=2cm]
  \node at (2,-2) {\footnotesize
    $[4^{-3},1^{-1},2^2,3^2]$\;\;};
  \foreach \j in {0,...,2} {\foreach \i in {0,...,\j} { \draw (\i,\j)
      rectangle (1+\i,\j+1); } }
  \node at (.5, 2.5) {4}; \node at (1.5, 2.5) {4}; \node at (2.5, 2.5){1};
  \node at (.5, 1.5) {3}; \node at (1.5, 1.5) {3};
  \node at (.5,.5)   {0};
  \end{scope}
  \begin{scope}[xshift=7 cm,yshift=0cm]
  \foreach \j in {0,...,6} {\foreach \i in {0,...,\j} { \draw (\i,\j)
      rectangle (1+\i,\j+1); } }
  \node at (.5, 6.5){6};\node at(1.5,6.5){5};\node at(2.5,6.5){\mk{4}};\node
  at(3.5,6.5){\mk{4}};\node at(4.5,6.5){2};\node at(5.5,6.5){1};\node
  at(6.5,6.5){\mk{1}};
  \node at (.5, 5.5) {5}; \node at (1.5, 5.5) {4}; \node at (2.5, 5.5)
  {\mk{3}}; \node at (3.5,5.5) {\mk{3}};\node at (4.5,5.5) {1};\node at
  (5.5,5.5) {0};
  \node at (.5, 4.5) {4}; \node at (1.5, 4.5) {3}; \node at (2.5, 4.5) {3};
  \node at (3.5,4.5) {2}; \node at (4.5,4.5)  {0};
  \node at (.5, 3.5) {4}; \node at (1.5, 3.5) {3}; \node at (2.5, 3.5) {2};
  \node at (3.5, 3.5){1};
  \node at (.5, 2.5) {2}; \node at (1.5, 2.5) {1}; \node at (2.5, 2.5)
  {\mk{0}};
  \node at (.5, 1.5) {1}; \node at (1.5, 1.5) {0};
  \node at (.5,.5)   {1};
  \node[text width=6cm] at (3.5,-2.5) {\footnotesize
    $[4^{-3},1^{-1},2^2,3^2]\overset{k=2^{-1}}{\Rightarrow}
    [2^{-3},3^{-2},4^1,1^2]$
    \bigskip

    $[\mk{2^{-3}},\mk{3^{-2}},8^{-2},5^{-1},\mk{4^1},\mk{1^2},6^2,7^3]$\;\;};
  \end{scope}

  \begin{scope}[xshift=19 cm,yshift=0cm]
  \foreach \j in {0,...,6} {\foreach \i in {0,...,\j} { \draw (\i,\j)
      rectangle (1+\i,\j+1); } }
  \node at(.5, 6.5){6};\node at(1.5,6.5){5};\node at(2.5,6.5){\mk{4}};
  \node at(3.5,6.5){\mk{4}};\node at(4.5,6.5){1};\node at(5.5,6.5){1};
  \node at(6.5,6.5){\mk{1}};
  \node at(.5, 5.5){5};\node at(1.5, 5.5){4};\node at(2.5,
  5.5){\mk{3}};
  \node at(3.5,5.5){\mk{3}};\node at(4.5,5.5){0};\node at(5.5,5.5){0};
  \node at(.5,4.5){4};\node at(1.5,4.5){3};\node at(2.5,
  4.5){3};
  \node at(3.5,4.5){3};\node at(4.5,4.5){0};
  \node at(.5,3.5){4};\node at(1.5,3.5){3};\node at(2.5,3.5){3};
  \node at(3.5,3.5){3};
  \node at(.5,2.5){1};\node at(1.5,2.5){0};\node at(2.5,2.5){\mk{0}};
  \node at(.5,1.5){1};\node at(1.5,1.5){0};
  \node at(.5,.5){1};
  \node[text width=6cm] at (4.5,-2.5) {\footnotesize
    $[4^{-3},1^{-1},2^2,3^2]\overset{k=-1^{0}}{\Rightarrow}
    [3^{-3},4^{-2},1^2,2^2]$
    \bigskip

    $[\mk{3^{-3}},\mk{4^{-2}},7^{-2},8^{-2},\mk{1^2},\mk{2^2},5^2,6^3]$};
  \end{scope}
  \end{tikzpicture}
  \caption{Two examples of the map $\psi$ of Proposition~\ref{prop:proj alc}}
  \label{fig:two_expansions}
\end{center}
\end{figure}
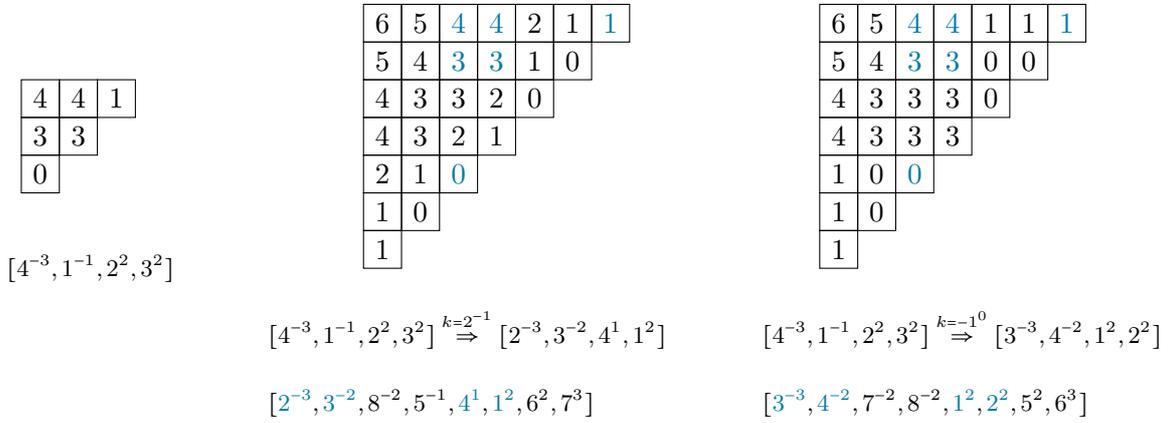
In Figure \ref{fig:two_expansions} we illustrate two instances of the map
$\psi$ of Proposition \ref{prop:proj alc}.
The tableau on the left corresponds to the
3-minimal alcove $w\aA_0\in\aA_+(A_3)$
with $w=[4^{-3},1^{-1},2^2,3^2]$.
The tableaux in the middle and right
correspond to $\psi(w,-2)$ and $\psi(w,-1)$ respectively.
Using the criterion for $m$-minimal alcoves, one can check that
$\psi(w,-2)$ is a 3-minimal alcove in $\aA_+(C_3)$
while $\psi(w,-1)$ is not.
It is therefore natural to inquire  which integers $k$   preserve
$m$-minimality between the alcoves corresponding to $w$ and $\alpha(\w{k})$.
Since this is one of the main goals of this parer, we formulate it as a
question.

\begin{question}
  If $w\aA_0$ in an $m$-minimal alcove in $\aA_+(A_{n-1})$,
  for which choices of $k\in\mathbb Z$ does
   $\alpha(\w{k})$  correspond to an  $m$-minimal
  alcove  in $\aA_+(C_n)$?
\end{question}
The answer is given in the next proposition.
\begin{proposition}
\label{prop:conditions}
Let $w\alc_0\in \alc_+(A_{n-1})$ be an $m$-minimal alcove  with
	$\vec{n}(w)=(\beta_1,\dots,\beta_{n})$. 
  If we write each integer $k$  as $k=r+n\ell$, where $r\in\{0,1,\dots,n-1\}$ 
  and 	$\ell\in\mathbb{Z}$, then $\alpha(\w{k})$ corresponds to  an 
  $m$-minimal
	alcove in $\aA_+(C_n)$ if and only if 	one of the following 	conditions
	holds:
	\begin{enumerate}[\rm(i)]
	 \item $r=0$ and $-\lfloor\frac{m}{2}\rfloor-\beta_n\leq\ell\leq
	 \lfloor\frac{m+1}{2}\rfloor-\beta_1$, or
	 \item $r<n$ and
	 $-\beta_{n-r}-\lfloor\frac{m}{2}\rfloor\leq\ell\leq-\beta_{n-r+1}+\lfloor\frac{m-1}{2}\rfloor$.

	\end{enumerate}
\end{proposition}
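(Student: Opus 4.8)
The plan is to reduce the claim to a single application of Theorem \ref{thm:mconditions} in type $A_{2n-1}$, after computing the level vector of $\alpha(\w{k})$ explicitly. Writing $k=r+n\ell$ with $r=k\bmod n\in\{0,\dots,n-1\}$, adding $k$ to each entry $i^{\beta_i}$ of the base window of $w$ cyclically rotates the bases by $r$ and raises the level by $\ell$, with an extra $+1$ for those entries whose base wraps past $n$. This yields
\[
\vec{n}(\w{k})=(\beta_{n-r+1}+\ell+1,\dots,\beta_n+\ell+1,\;\beta_1+\ell,\dots,\beta_{n-r}+\ell),
\]
which I denote $(\gamma_1,\dots,\gamma_n)$. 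By Definition \ref{def:antisC}, $\alpha(\w{k})$ has antisymmetric level vector $(\gamma_1,\dots,\gamma_n,-\gamma_n,\dots,-\gamma_1)$, and by Lemma \ref{lem:antiC} it lies in $\mincos{A}{2n-1}$ with balanced abacus, hence is a dominant alcove in $\aA_+(C_n)$. Since $m$-minimality in type $C_n$ coincides with $m$-minimality of the corresponding self-conjugate alcove in $\shi{A_{2n-1}}$ (as noted before Theorem \ref{thm:mconditions}), it suffices to test the criterion of Theorem \ref{thm:mconditions} on $(\gamma_1,\dots,\gamma_n,-\gamma_n,\dots,-\gamma_1)$.

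Next I would exploit the antisymmetry to collapse the $2n-1$ consecutive-difference conditions. Setting $(\delta_1,\dots,\delta_{2n})=(\gamma_1,\dots,\gamma_n,-\gamma_n,\dots,-\gamma_1)$, a short index check shows that the differences $\delta_{i+1}-\delta_i$ for $1\le i\le n-1$ and for $n+1\le i\le 2n-1$ both reduce to $\gamma_{j+1}-\gamma_j$ for the appropriate $j\in\{1,\dots,n-1\}$; only two boundary conditions survive, the middle junction $\delta_{n+1}-\delta_n=-2\gamma_n\le m$ and the wraparound $\delta_1-\delta_{2n}-1=2\gamma_1-1\le m$. The crucial observation is that every interior condition $\gamma_{j+1}-\gamma_j\le m$ is automatic: substituting the formula for $\gamma$, each such difference equals either $\beta_{p+1}-\beta_p$ for some $p$ in the appropriate range, or, precisely at the rotation junction $j=r$, the wraparound difference $\beta_1-\beta_n-1$; all of these are $\le m$ by conditions (i) and (ii) of the $m$-minimality of $w$. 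Hence $\alpha(\w{k})$ is $m$-minimal if and only if the two boundary conditions $-2\gamma_n\le m$ and $2\gamma_1-1\le m$ hold.

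Finally I would translate these two conditions into bounds on $\ell$. As the $\gamma_i$ are integers, $-2\gamma_n\le m\Leftrightarrow\gamma_n\ge-\lfloor m/2\rfloor$ and $2\gamma_1-1\le m\Leftrightarrow\gamma_1\le\lfloor(m+1)/2\rfloor$. When $r=0$ we have $\gamma_1=\beta_1+\ell$ and $\gamma_n=\beta_n+\ell$, giving $-\lfloor m/2\rfloor-\beta_n\le\ell\le\lfloor(m+1)/2\rfloor-\beta_1$, which is case (i). When $1\le r\le n-1$ we have $\gamma_1=\beta_{n-r+1}+\ell+1$ and $\gamma_n=\beta_{n-r}+\ell$; using $\lfloor(m+1)/2\rfloor-1=\lfloor(m-1)/2\rfloor$, the two conditions become $-\beta_{n-r}-\lfloor m/2\rfloor\le\ell\le-\beta_{n-r+1}+\lfloor(m-1)/2\rfloor$, which is case (ii). Since the interior conditions were automatic, this establishes both directions of the equivalence at once.

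The main obstacle is the bookkeeping in the second paragraph: one must verify carefully that, under the cyclic rotation induced by the $k$-shift, each of the $n-1$ interior consecutive differences of $(\gamma_i)$ matches exactly one of the inequalities guaranteed by the $m$-minimality of $w$, with the single junction at $j=r$ being precisely where the type-$A$ wraparound inequality $\beta_1-\beta_n-1\le m$ is used (and with no junction needed when $r=0$). Pinning down this index matching, and confirming that no interior condition is ever stronger than what $m$-minimality of $w$ provides, is the heart of the argument; once it is in place, the surviving boundary conditions collapse immediately to the stated $\ell$-bounds.
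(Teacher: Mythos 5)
Your proof is correct and follows essentially the same route as the paper: compute the level vector of $\alpha(\w{k})$, apply the criterion of Theorem \ref{thm:mconditions} in type $A_{2n-1}$, observe that all interior consecutive differences are already guaranteed by the $m$-minimality of $w$ (with the junction at $j=r$ absorbed by the type-$A$ wraparound inequality), and translate the two surviving boundary conditions into the stated bounds on $\ell$. If anything, your explicit index-matching at the rotation junction makes precise a step the paper only asserts.
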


\begin{proof}
(i)
	Suppose first that $r=0$.
	Then the $k$-shift of $w$ has level vector
  $$
	\vec{n}(\w{k})=(\beta_1+\ell,\dots,\beta_n+\ell,
      -\beta_n-\ell,\ldots,-\beta_1-\ell).
      $$
Recalling that the level vector $\vec{n}$ of $\alpha(\w{k})$ is the
antisymmetric expansion of $\vec{n}(\w{k})$, we claim
that it suffices to apply the criterion of Theorem~\ref{thm:mconditions} for
the following two pairs:\\
\emph{First and last entry of $\vec{n}$:\;}  $(2n)^{-\beta_1-\ell}$ and
$1^{\beta_1+\ell}$,
\\
\emph{Two middle entries of $\vec{n}$:\;} $n^{\beta_n+\ell}$ and
$(n+1)^{-\beta_n-\ell}$. \\
   Indeed, since all  other pairs of consecutive integers in $\vec{n}(\w{k})$
     are  shifted by $\ell$, the differences to be checked
     are the same as those for $\vec{n}(w)$.
     These, however, satisfy Theorem \ref{thm:mconditions}
     by the fact that $w\aA_0$ is $m$-minimal.
     Same applies for the upper half of $\vec{n}$ as well.

  For the first pair we require that  $\beta_1+\ell-(-\beta_1-\ell)-1\leq{m}$
	or equivalently $\ell\leq\frac{m+1}{2}-\beta_1$,
	while for the second we require
	$-\beta_n-\ell-(\beta_n+\ell)\leq{m}$
	or equivalently $\ell\geq -\frac{m}{2}-\beta_n$.
	Combining the above two inequalities and bearing in mind that $\ell$ is an
	integer, we arrive at the following range for $\ell$
	\[-\lfloor\tfrac{m}{2}\rfloor-\beta_n\leq\ell\leq\lfloor\tfrac{m+1}{2}\rfloor-\beta_1.\]
(ii)
	Suppose now that $r<n$. Then the lower half of the level vector
   $\vec{n}$ of $\alpha(\w{k})$
   becomes
	\[(\beta_{n-r+1}+\ell+1,\dots,
	\beta_n+\ell+1,\beta_1+\ell,\beta_2+\ell,\dots,\beta_{n-r}+\ell).\]
	Arguing as before, it suffices to apply the criterion of
	Theorem~\ref{thm:mconditions} for the
	following two pairs: \\
\emph{First and last entry of $\vec{n}$:\;} $(2n)^{-\beta_{n-r+1}-\ell-1}$ and
$1^{\beta_{n-r+1}+\ell+1}$, and\\
\emph{Two middle entries of $\vec{n}$:\;} $n^{\beta_{n-r}+\ell}$ and
$(n+1)^{-\beta_{n-r}-\ell}$. \\
	For the first pair we require that
	$\beta_{n-r+1}+\ell+1-(-\beta_{n-r+1}-\ell-1)-1\leq{m}$
	or equivalently $\ell\leq-\beta_{n-r+1}+\frac{m-1}{2}$,
	while for the second we require 
	$-\beta_{n-r}-\ell-(\beta_{n-r}+\ell)\leq{m}$
	or equivalently $\ell\geq-\beta_{n-r}-\frac{m}{2}$.
	Combining the two inequalities, we deduce that the range of
  $\ell$ in this case is
	\[-\beta_{n-r}-\lfloor\tfrac{m}{2}\rfloor\leq\ell\leq
	-\beta_{n-r+1}+\lfloor\tfrac{m-1}{2}\rfloor.\qedhere\]
\end{proof}
\medskip
For each $m$-minimal alcove $w\aA_0$ in $\alc_+(A_{n-1})$,
we call the set
 $\mgk{w}{m}$  of integers $k$ defined in Proposition
\ref{prop:conditions},
the \emph{$m$-admissible} set of $w$.
Although the numbers in $\mgk{m}{w}$ seem to be rather random,
they hold the answer to our bijection.
They are as many as the should;
each $m$-admissible set has $mn+1$ elements.
We prove this in the next proposition.

%

\begin{proposition}
\label{prop:counting}
	For each $m$-minimal alcove $w\alc_0$ in $\aA_+(A_{n-1})$
	there exist exactly $mn+1$ distinct integer values  $k\in\mathbb{Z}$
	such that $\alpha(\w{k})$ corresponds to an $m$-minimal alcove in
	$\aA_+(C_n)$.
\end{proposition}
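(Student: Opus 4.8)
The plan is to count the $m$-admissible set $\mgk{w}{m}$ directly from the explicit description furnished by Proposition \ref{prop:conditions}. Every integer $k$ is written uniquely as $k = r + n\ell$ with $r \in \{0,1,\dots,n-1\}$ and $\ell \in \mathbb{Z}$, so enumerating the valid $k$ amounts to enumerating, for each fixed residue $r$, the integers $\ell$ that fall in the interval prescribed by Proposition \ref{prop:conditions}, and then summing the resulting counts over $r$.

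First I would handle the residue $r = 0$. Here $\ell$ ranges over $[-\lfloor \tfrac{m}{2}\rfloor - \beta_n,\ \lfloor \tfrac{m+1}{2}\rfloor - \beta_1]$, an interval containing $\lfloor \tfrac{m+1}{2}\rfloor + \lfloor \tfrac{m}{2}\rfloor - (\beta_1 - \beta_n) + 1$ integers; the elementary identity $\lfloor \tfrac{m}{2}\rfloor + \lfloor \tfrac{m+1}{2}\rfloor = m$ collapses this to $m + 1 - (\beta_1 - \beta_n)$. For each residue $r \in \{1,\dots,n-1\}$ the admissible $\ell$ lie in $[-\beta_{n-r} - \lfloor \tfrac{m}{2}\rfloor,\ -\beta_{n-r+1} + \lfloor \tfrac{m-1}{2}\rfloor]$, which by the companion identity $\lfloor \tfrac{m}{2}\rfloor + \lfloor \tfrac{m-1}{2}\rfloor = m - 1$ contains exactly $m + \beta_{n-r} - \beta_{n-r+1}$ integers. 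Reindexing by $j = n-r$, the residues $r \neq 0$ jointly contribute $\sum_{j=1}^{n-1}\bigl(m + \beta_j - \beta_{j+1}\bigr)$, in which the $\beta$-terms telescope to $\beta_1 - \beta_n$, giving $(n-1)m + (\beta_1 - \beta_n)$. Adding the $r=0$ term yields
\[
\bigl(m + 1 - (\beta_1 - \beta_n)\bigr) + \bigl((n-1)m + (\beta_1 - \beta_n)\bigr) = mn + 1,
\]
exactly the asserted cardinality.

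The step that genuinely uses the hypothesis --- and the one I would be most careful about --- is the claim that each interval contributes its nominal count $b - a + 1$. This formula is only correct when the interval is nonempty (equivalently $b \ge a - 1$); a negative value would silently corrupt the telescoping. I would therefore verify nonnegativity of every summand before adding. For $r = 0$ the count $m + 1 - (\beta_1 - \beta_n) \ge 0$ is precisely condition (ii) of Theorem \ref{thm:mconditions}, while for $r \neq 0$ the count $m - (\beta_{j+1} - \beta_j) \ge 0$ is condition (i) of the same theorem (with $i = j$). Thus the $m$-minimality of $w\aA_0$ is exactly what is needed to make all intervals contribute their face-value counts, legitimizing the telescoping and pinning the total at $mn + 1$.
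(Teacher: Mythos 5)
Your proof is correct and follows essentially the same route as the paper: decompose the admissible set by residue $r$ of $k$ modulo $n$, count the integers $\ell$ in each interval from Proposition \ref{prop:conditions}, and telescope the $\beta$-differences to obtain $mn+1$. The one point where you go beyond the paper's write-up is the explicit check that each interval count is nonnegative (so that the formula $b-a+1$ really is the cardinality), which you correctly trace back to the $m$-minimality conditions of Theorem \ref{thm:mconditions}; the paper leaves this implicit.
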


\begin{proof}
  For $0\leq{r}\leq{n-1}$, let $c_r$ be the number of all possible
  integers $k$
  of the form 	$r+n\ell$, $\ell\in\mathbb{Z}$, for which $\alpha(\w{k})$
  corresponds to an 	$m$-minimal alcove in $\aA_+(C_n)$.
	Proposition~\ref{prop:conditions} implies that
	\begin{equation}
		c_r=\begin{cases}m+\beta_n-\beta_1+1, &\mbox{ if }\;\;\; r=0,\\
		\vspace{-4 mm} \\
		m+\beta_{n-r}-\beta_{n-r+1},&\mbox{ if }\;\;\;r\leq{n-1}.
		\end{cases}
	\end{equation}
	Summing over all $r$ we have
	\begin{align*}
	\sum\limits_{r=0}\limits^{n-1}c_r
	 & = m+\beta_n-\beta_1+1+
	\sum\limits_{r=1}\limits^{n-1}(m+\beta_{n-r}-\beta_{n-r+1})\\
	  & =
	m+\beta_n-\beta_1+1+(n-1)m+
	\sum\limits_{r=1}\limits^{n-1}(\beta_{n-r}-\beta_{n-r+1})=mn+1,
	 \end{align*}
	which completes our proof.
\end{proof}
Let $\mathscr{I}_m: \mgk{w}{m}=\{k_1<\cdots<k_{mn+1}\}\mapsto \{1,\ldots,mn+1\}$
be the bijection which sends  each  $k_i\in\mgk{w}{m}$ to its index, when  the
elements of  $\mgk{w}{m}$ are in increasing order.
Propositions \ref{prop:proj alc} and \ref{prop:conditions} imply that, if we
restrict the second argument of $\psi$ to
$\mgk{m}{w}\subseteq\mathbb{Z}$, $\psi$  maps each
$m$-minimal alcove in $\aA_+(A_{n-1})$ to an $m$-minimal alcove in
$\aA_+(C_n)$. Identifying the set $\mgk{w}{m}$ with that of
its indices and bearing in mind  that $m$-minimal alcoves in $\aA_+(A_{n-1})$ 
and $\aA_+(C_{n})$ are in bijection with dominant regions in \mshi{A_{n-1}} and
\mshi{C_{n}} respectively,  and  we arrive to our main theorem.
\medskip
\begin{theorem}
 \label{theor:main}
 Let $(\rR,i)\in \rR_+^m(A_{n-1})\times\{1,\ldots,mn+1\}$.
 Let $w\aA_0$ be the $m$-minimal alcove of the region $\rR$ and
 $k_i$ be the $i$-th element of the $m$-admissible set $\mgk{m}{w}$
 of $w$.
The map \bj which sends  each pair $(\rR,i)\in
\rR_+^m(A_{n-1})\times\{1,\ldots,mn+1\}$
to the region $\rR'\in \rR_+^m(C_{n})$ whose $m$-minimal alcove corresponds to
$\w{k_i}$, is a bijection.
\end{theorem}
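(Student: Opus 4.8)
The plan is to assemble the bijection from the pieces already in place, reducing every statement to one about $m$-minimal alcoves. Recall from Section~\ref{sec:mShi} that each dominant region in \mshi{\Phi} is represented by a unique $m$-minimal alcove, so I identify $\rR_+^m(A_{n-1})$ and $\rR_+^m(C_n)$ with the sets of $m$-minimal alcoves in $\aA_+(A_{n-1})$ and $\aA_+(C_n)$ respectively. Under this identification, \bj sends a pair $(\rR,i)$, with $m$-minimal alcove $w\aA_0$, to the $C_n$-alcove $\alpha(\w{k_i})=\psi(w,k_i)$, where $k_i=\mathscr{I}_m^{-1}(i)$ is the $i$-th smallest element of the $m$-admissible set $\mgk{w}{m}$.

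First I would verify that \bj is well defined. By Proposition~\ref{prop:counting} the set $\mgk{w}{m}$ has exactly $mn+1$ elements, so for every $i\in\{1,\dots,mn+1\}$ the integer $k_i$ exists and is unique. Since $k_i\in\mgk{w}{m}$, Proposition~\ref{prop:conditions} guarantees that $\psi(w,k_i)$ is an $m$-minimal alcove in $\aA_+(C_n)$, hence the target region $\rR'$ is well defined.

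Next I would prove injectivity, which is immediate from the injectivity of $\psi$. Suppose $(\rR_1,i_1)$ and $(\rR_2,i_2)$ have the same image, and let $w_1\aA_0$, $w_2\aA_0$ be the corresponding $m$-minimal alcoves in type $A$. Since a region is determined by its $m$-minimal alcove, the two images share the same $m$-minimal alcove, i.e.\ $\psi(w_1,k_{i_1})=\psi(w_2,k_{i_2})$. As $\psi$ is a bijection (Proposition~\ref{prop:proj alc}), this forces $(w_1,k_{i_1})=(w_2,k_{i_2})$; hence $w_1=w_2$, so $\rR_1=\rR_2$ and $\mgk{w_1}{m}=\mgk{w_2}{m}$. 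The equality $k_{i_1}=k_{i_2}$ of elements of this common set, together with the fact that $\mathscr{I}_m$ reads off the increasing ordering, then gives $i_1=i_2$.

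Finally, for bijectivity it remains to establish surjectivity, and this is where the real difficulty lies. The cleanest route, and the one I would take, is a cardinality count: the domain has $(mn+1)\,\mcatn{m}{A_{n-1}}$ elements, which by Relation~\eqref{catAC} equals $\mcatn{m}{C_n}=|\rR_+^m(C_n)|$, the cardinality of the codomain. As both sets are finite and \bj is injective, it is automatically surjective. The main obstacle is precisely that a \emph{direct} proof of surjectivity would require showing that whenever $\bar w\aA_0$ is $m$-minimal in $C_n$ and $(w,k)=\psi^{-1}(\bar w)$, the alcove $w\aA_0$ is itself $m$-minimal in $A_{n-1}$ and $k\in\mgk{w}{m}$; this does not follow formally from Proposition~\ref{prop:conditions}, which presupposes the $m$-minimality of $w$. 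Circumventing this delicate step via the equal-cardinality argument is what keeps the proof short.
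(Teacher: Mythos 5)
Your assembly of the proof is the same as the paper's: the theorem is derived there, without a separate formal argument, by combining Proposition~\ref{prop:proj alc} (bijectivity of $\psi$), Proposition~\ref{prop:conditions} (characterization of the admissible shifts), and Proposition~\ref{prop:counting} (each admissible set has $mn+1$ elements), exactly as you do for well-definedness and injectivity. Where you genuinely diverge is surjectivity. The paper treats the inverse map as explicit -- take the $m$-minimal alcove $\bar w\aA_0$ of a region in $\rR_+^m(C_n)$, apply $\psi^{-1}$ to get $(w,k)$, and read off the region and the index of $k$ (this is the content of Figure~\ref{fig:step_by_step}) -- which tacitly uses the fact that $w\aA_0$ is then $m$-minimal in type $A$; you correctly observe that Proposition~\ref{prop:conditions} does not formally deliver this, since it presupposes the $m$-minimality of $w$, and you sidestep the issue with a cardinality count via \eqref{catAC}. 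That count is logically valid, because \eqref{catAC} is established independently in \cite{ath-gcn-04}, but it is worth flagging that it makes the proof of bijectivity rest on the very numerical identity the bijection is meant to explain, and it leaves the inverse map non-constructive. The gap you identify can in fact be closed directly: writing $k=r+n\ell$, the level vector of $\alpha(\w{k})$ has consecutive differences consisting of $\beta_{j+1}-\beta_j$ for $j\neq n-r$, the junction difference $\beta_1-\beta_n-1$, the middle difference $-2(\beta_{n-r}+\ell)$, and the wrap-around quantity $2(\beta_{n-r+1}+\ell+1)-1$ (with the obvious modification for $r=0$); applying Theorem~\ref{thm:mconditions} to $\alpha(\w{k})$ therefore yields all but one of the type~$A$ minimality inequalities for $w$ verbatim, and summing the middle and wrap-around inequalities gives $2(\beta_{n-r+1}-\beta_{n-r})\leq 2m-1$, hence the remaining inequality by integrality. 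With that observation your surjectivity argument becomes constructive and the appeal to \eqref{catAC} can be dropped.
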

The map \bj\, of Theorem \ref{theor:main} is a combination  of previously
defined bijections.
Actually, if we ignore the first and last step that biject  each
region to its $m$-minimal alcove, the forward direction of \bj\ can be
stated more clearly: consider an $m$-minimal alcove $w\aA_0$ in
$\aA_+(A_{n-1})$,  compute  the $m$-admissible set
$\mgk{w}{m}=\{k_1,\ldots,k_{mn+1}\}$ and send each $(w,i)$ to
the antisymmetric expansion of its $k_i$-shift $\w{k_i}$.
\medskip

 In order to help the reader clarify the steps, we present
an explicit example  in Figure \ref{fig:step_by_step}. The first and last step
of the figure correspond to the way we associate each dominant region in
\mshi{\Phi} to its $m$-minimal alcove.
More precisely, each region is encoded
by its {\em region Shi tableau}, which is a tableau defined in a way completely
analogous to that of an alcove. Since this correspondence is not indispensable
in our bijections, we describe it in Appendix \ref{appA}.
\begin{figure}[h]
  \begin{center}
\begin{tikzpicture}[scale=0.85] 
\begin{scope}[scale=0.5]  
  
 \foreach \j in {0,...,6} {\foreach \i in {0,...,\j} { \draw (\i,\j)-- 
 (1+\i,\j); 
  \draw (1+\i,\j)-- (1+\i,1+\j); \draw (1+\i,1+\j)-- (\i,1+\j);
  \draw (\i,1+\j)-- (\i,\j);
   } }
   
  \node at (.5, 6.5){3};\node at(1.5,6.5){3};\node at(2.5,6.5){3};\node 
   at(3.5,6.5){3};\node at(4.5,6.5){2};\node at(5.5,6.5){1};\node 
   at(6.5,6.5){1};
   \node at (.5, 5.5) {3}; \node at (1.5, 5.5) {3}; \node at (2.5, 5.5) {3}; 
   \node at (3.5,5.5) {3};\node at (4.5,5.5) {1};\node at (5.5,5.5) {0}; 
   \node at (.5, 4.5) {3}; \node at (1.5, 4.5) {3}; \node at (2.5, 4.5) {3}; 
   \node at (3.5,4.5) {2}; \node at (4.5,4.5)  {0}; 
   \node at (.5, 3.5) {3}; \node at (1.5, 3.5) {3}; \node at (2.5, 3.5) {2}; 
   \node at (3.5, 3.5){1};
   \node at (.5, 2.5) {2}; \node at (1.5, 2.5) {1}; \node at (2.5, 2.5) {0}; 
   \node at (.5, 1.5) {1}; \node at (1.5, 1.5) {0}; 
   \node at (.5,.5)   {1};
\node at (1,-1){\begin{tabular}{l}\fs tableau of the region\\ 
  \fs $\rR'\in\rR_+^3(C_4)$ \end{tabular}};
  \draw[xshift=6.3cm,yshift=3.2cm,mycyan2!70,line width=3pt, 
  -stealth,  rotate=0] 
 (0,0).. controls (1,0.2) and (2,0.2)  .. (3,0);
\node at (7.8,4.1){\color{mycyan2}\fs Corollary A.2};
\end{scope}  

\begin{scope}[scale=0.5,xshift=11cm]  
\foreach \j in {0,...,6} {\foreach \i in {0,...,\j} { \draw (\i,\j)-- 
(1+\i,\j); 
    \draw (1+\i,\j)-- (1+\i,1+\j); \draw (1+\i,1+\j)-- (\i,1+\j);
    \draw (\i,1+\j)-- (\i,\j);
  } }
  
  \node at (.5, 6.5){6};\node at(1.5,6.5){5};\node at(2.5,6.5){4};\node 
  at(3.5,6.5){4};\node at(4.5,6.5){2};\node at(5.5,6.5){1};\node at(6.5,6.5){1};
  \node at (.5, 5.5) {5}; \node at (1.5, 5.5) {4}; \node at (2.5, 5.5) {3}; 
  \node at (3.5,5.5) {3};\node at (4.5,5.5) {1};\node at (5.5,5.5) {0}; 
  \node at (.5, 4.5) {4}; \node at (1.5, 4.5) {3}; \node at (2.5, 4.5) {3}; 
  \node at (3.5,4.5) {2}; \node at (4.5,4.5)  {0}; 
  \node at (.5, 3.5) {4}; \node at (1.5, 3.5) {3}; \node at (2.5, 3.5) {2}; 
  \node at (3.5, 3.5){1}; \node at (.5, 2.5) {2}; \node at (1.5, 2.5) {1}; 
  \node at (2.5, 2.5) {0}; 
  \node at (.5, 1.5) {1}; \node at (1.5, 1.5) {0}; 
  \node at (.5,.5)   {1};
\node at (2,-1){\begin{tabular}{l}\fs tableau of the\\ 
 \fs 3-minimal alcove of $\rR'$ \end{tabular}};

 \draw[xshift=8cm,yshift=3cm,mycyan2!70,line width=3pt, 
 -stealth,rotate=-30] 
 (0,0).. controls (1,0.5) and (2,0.5)  .. (3,0);
 \node at (11.5,3.5){\color{mycyan2}\begin{tabular}{r}\fs Proposition 2.2 \\
      \fs Lemma 2.3 \end{tabular}};

  \node at (15,1){$\bar w = 
  [2^{-3},3^{-2},8^{-2},5^{-1},4^1,1^2,6^2,7^3]$};  

 \draw[xshift=11cm,yshift=0cm,mycyan2!70,line width=3pt, 
 -stealth,rotate=-85] 
 (0,0).. controls (1,0.2) and (2,0.2)  .. (3,-0.2);
  \node at (15,-1.5){\color{mycyan2}
    \begin{tabular}{c}\fs Propostition 2.11\\
                       \fs $\bar w \mapsto (w,k)$
                       \end{tabular}}; 
\node at (15,-5){\begin{tabular}{l}
  $w_{[k]}=[2^{-3},3^{-2},4^1,1^2]$\\
  $k= -3-2+1+2= \color{black!50!mycyan!80}{-2}$ \\ 
  $w=  w_{[k]}-k =[4^{-3},1^{-1},2^2,3^2]$
  \end{tabular}};
\end{scope}

\begin{scope}[scale=0.5,xshift=10cm,yshift=-10]

 \draw[xshift=10cm,yshift=-7cm,mycyan2!70,line width=3pt, 
 -stealth,rotate=-150] 
 (0,0).. controls (1,0.5) and (2,0.5)  .. (3,0);

\node at (15.5,-10){\color{mycyan2}
  \begin{tabular}{l}
 \fs  Proposition 2.1: 
  $w\mapsto {\mathsf T}(w)$ \\
   \fs  Proposition 2.12: \\
   \fs $\mathscr K_{3}(w)=\left\{-11,-10-7,-6,-3,\right.$
    \\ \fs \hspace{1.3cm}$ 
   \underset{\scalebox{1.2}{\color{black!50!mycyan!80}\!\!\!$k_6$}}{
     {\color{black!50!mycyan!80}\circled{-2}}},1,5,8,9,12,13,17$
   $\!\!\!\left.\right\}$
  \end{tabular}
  };

\end{scope}

\begin{scope}[scale=0.5,xshift=11cm,yshift=-10cm]

\foreach \j in {0,...,2} {\foreach \i in {0,...,\j} { \draw (\i,\j)-- 
(1+\i,\j); 
    \draw (1+\i,\j)-- (1+\i,1+\j); \draw (1+\i,1+\j)-- (\i,1+\j);
    \draw (\i,1+\j)-- (\i,\j);
  } }
  \node at (.5, 2.5) {4}; \node at (1.5, 2.5) {4}; \node at (2.5, 2.5) {1}; 
  \node at (.5, 1.5) {3}; \node at (1.5, 1.5) {3}; 
  \node at (.5,.5)   {0};
  
  \node at (4,1){$i=6$};
  
  \node at (3,-2){
    \color{mycyan2}\begin{tabular}{l}
   \fs  ${\sf T}(w)$ is the tableau of a
    \\ \fs $3$-minimal alcove of a \\
    \fs 
    region  $\mathcal R\in {\mathcal R}_+^3(A_3)$
    \end{tabular}};
  
  \draw[xshift=-1.5cm,yshift=1cm,mycyan2!70,line width=3pt, 
  -stealth,rotate=-180] 
  (0,0).. controls (1,0.2) and (2,0.2)  .. (3,0);
   \node at (-3.5,0){\color{mycyan2}\fs Corollary A.2};
  \end{scope}

\begin{scope}[scale=0.5,xshift=0cm,yshift=-10cm]
\foreach \j in {0,...,2} {\foreach \i in {0,...,\j} { \draw (\i,\j)-- 
(1+\i,\j); 
    \draw (1+\i,\j)-- (1+\i,1+\j); \draw (1+\i,1+\j)-- (\i,1+\j);
    \draw (\i,1+\j)-- (\i,\j);
  } }
  \node at (.5, 2.5) {3}; \node at (1.5, 2.5) {3}; \node at (2.5, 2.5) {1}; 
  \node at (.5, 1.5) {3}; \node at (1.5, 1.5) {3}; 
  \node at (.5,.5)   {0};
  
    \node at (3.5,1){$i=6$};
    
     \node at (2,-1.4){
       \color{mycyan2}       
       \begin{tabular}{l}
       \fs tableau of the region \\
       \fs $\mathcal R \in {\mathcal R}_+^3(A_3)$
       \end{tabular}       
       };
\end{scope}
  \end{tikzpicture}
\end{center}
\caption{
    The arrows indicate the steps for the inverse of the map
    \bj, each with  reference  to the required proposition/lemma.}
  \label{fig:step_by_step}  
\end{figure}
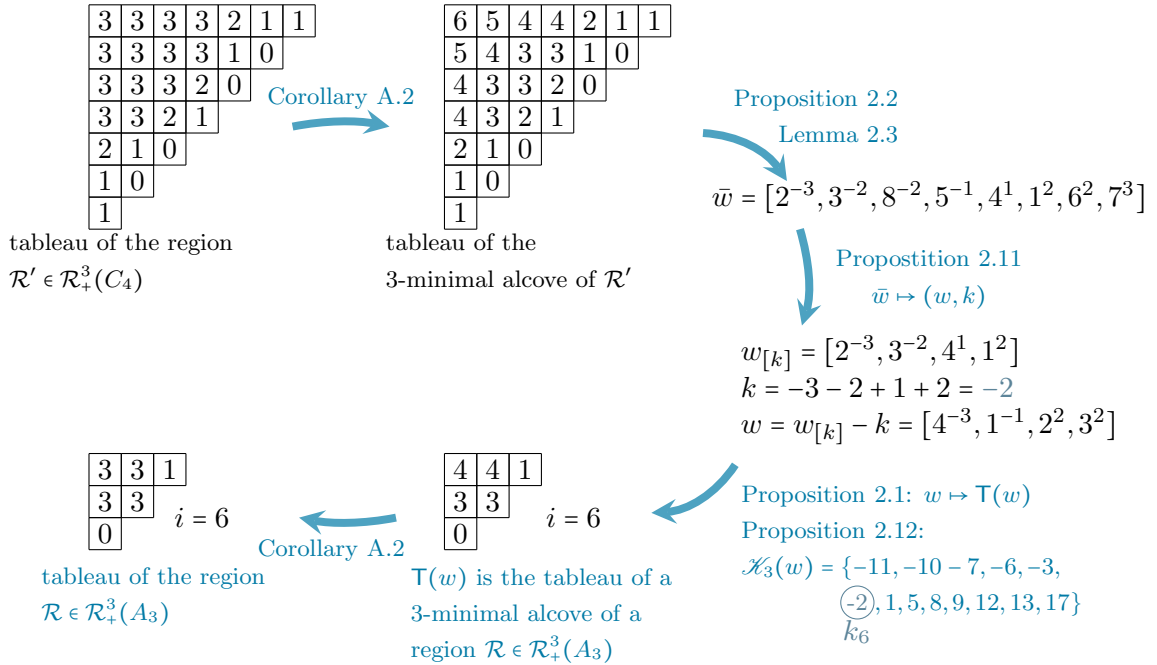  


Theorem \ref{theor:main}  implies that ignoring the second argument of
the map  $\mbox{\bj\!\!}^{-1}$ we get a surjection.
\begin{corollary}
\label{cor:dregC}
The map $\phi:\mathcal{R}_+^m(C_n)\to\mathcal{R}_+^m(A_{n-1})$ defined
by  $\phi(\rR)=\pr{1}({\sf Bj}_1^{-1}(\rR))$ is a surjection.
\end{corollary}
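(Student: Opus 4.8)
The plan is to obtain surjectivity of $\phi$ as an immediate consequence of the bijectivity of \bj\ established in Theorem~\ref{theor:main}, together with the trivial surjectivity of the first-coordinate projection on the product $\rR_+^m(A_{n-1})\times\{1,\ldots,mn+1\}$. First I would note that $\phi$ is well defined at all: since \bj\ is a bijection, ${\sf Bj}_1^{-1}(\rR)$ is a uniquely determined pair in $\rR_+^m(A_{n-1})\times\{1,\ldots,mn+1\}$, and $\pr{1}$ selects its first entry, so $\phi(\rR)$ is unambiguous for every $\rR\in\rR_+^m(C_n)$.

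For surjectivity I would fix an arbitrary region $\rR'\in\rR_+^m(A_{n-1})$ and choose any index, for concreteness $i=1$. Because \bj\ is a bijection, the pair $(\rR',1)$ determines a region $\rR:={\sf Bj}_1(\rR',1)\in\rR_+^m(C_n)$ with ${\sf Bj}_1^{-1}(\rR)=(\rR',1)$. Applying the projection then gives $\phi(\rR)=\pr{1}(\rR',1)=\rR'$. Since $\rR'$ was arbitrary, every region in $\rR_+^m(A_{n-1})$ lies in the image of $\phi$, which is precisely the assertion.

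I would close by recording the fiber structure, which is the real content of the statement and the reason it is worth isolating: for a fixed $\rR'$, the $mn+1$ regions ${\sf Bj}_1(\rR',1),\ldots,{\sf Bj}_1(\rR',mn+1)$ are pairwise distinct by injectivity of \bj, and they constitute exactly $\phi^{-1}(\rR')$. Hence each fiber of $\phi$ has exactly $mn+1$ elements, so $\phi$ exhibits the partition of $\rR_+^m(C_n)$ into $\mcatn{m}{A_{n-1}}$ blocks of size $mn+1$ that underlies Relation~\eqref{catAC}.

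There is no genuine obstacle here: all the combinatorial work has already been carried out in Theorem~\ref{theor:main}, and the only thing to verify is that $\pr{1}\circ{\sf Bj}_1^{-1}$ inherits surjectivity from a bijection, which is formal. If any point deserves a word of care, it is merely the bookkeeping that ${\sf Bj}_1^{-1}$ outputs an ordered pair whose \emph{first} coordinate is the desired type $A$ region, so that the choice of the index $i$ is immaterial to surjectivity and only affects which representative of the fiber one lands on.
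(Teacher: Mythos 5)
Your proof is correct and matches the paper's own (one-line) justification: the paper simply observes that Corollary~\ref{cor:dregC} follows from Theorem~\ref{theor:main} by ignoring the second argument of ${\sf Bj}_1^{-1}$, which is exactly your formal argument that $\pr{1}\circ{\sf Bj}_1^{-1}$ is surjective because ${\sf Bj}_1$ is a bijection on the product. Your closing remark on the fibers of size $mn+1$ likewise agrees with the paper's subsequent discussion of the induced partition of $\rR_+^m(C_n)$.
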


The map $\phi$ partitions  $\rR_+^m(C_n)$ into sets of regions each having the
same image under $\phi$. Is there a way to geometrically understand this
surjection? Since regions of $\rR_+^m(C_{n})$ are viewed as regions in
$\rR_+^m(A_{2n-1})$, is it reasonable to ask if $\phi$ corresponds to some
affine projection from $\mathbb R^{2n}$ to $\mathbb R^n$.
In Figure \ref{fig:proj} we illustrate a simple example of this surjection.

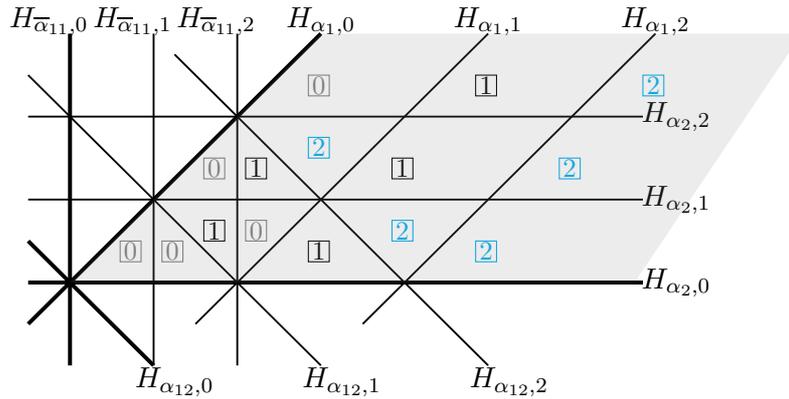
\begin{figure}[h]
 \begin{center}
\begin{tikzpicture}[scale=0.55]
\draw[line width= 1.5pt] (-1,2)--(13.7,2);
\draw[line width= 0.7pt] (-1,4)--(13.7,4);
\draw[line width= 0.7pt] (-1,6)--(13.7,6);

\draw[line width= 1.5pt] (2,0)--(-1,3);
\draw[line width= 0.7pt] (6,0)--(-1,7);
\draw[line width= 0.7pt] (10,0)--(2.5,7.5);

\draw[line width= 1.5pt] (0,0)--(0,8);
\draw[line width= 0.7pt] (2,0)--(2,8);
\draw[line width= 0.7pt] (4,0)--(4,8);

\draw[line width= 1.5pt] (-1,1)--(6,8);
\draw[line width= 0.7pt] (3,1)--(10,8);
\draw[line width= 0.7pt] (7,1)--(14,8);

\node at (14.5,2)  {$H_{\alpha_2,0}$};
\node at (14.5,4)  {$H_{\alpha_2,1}$};
\node at (14.5,6)  {$H_{\alpha_2,2}$};

\node at (6,8.3) {$H_{\alpha_{1},0}$};
\node at (10,8.3) {$H_{\alpha_{1},1}$};
\node at (14,8.3) {$H_{\alpha_{1},2}$};

\node at (2.5,-0.4)   {$H_{\alpha_{12},0}$};
\node at (6.5,-0.4) {$H_{\alpha_{12},1}$};
\node at (10.5,-0.4)  {$H_{\alpha_{12},2}$};

\node at (-0.5,8.3){$H_{\overline\alpha_{11},0}$};
\node at (1.5,8.3)  {$H_{\overline\alpha_{11},1}$};
\node at (3.5,8.3)  {$H_{\overline\alpha_{11},2}$};

\begin{scope}[scale=0.5, xshift=2.4 cm, yshift=5 cm, gray]
\draw (0,0)rectangle(1,1);
\node at(0.5,0.5){\small 0};
\end{scope}
\begin{scope}[scale=0.5, xshift=4.4 cm, yshift=5 cm, gray]
\draw (0,0)rectangle(1,1);
\node at(0.5,0.5){\small 0};
\end{scope}
\begin{scope}[scale=0.5, xshift=6.4 cm, yshift=6 cm]
\draw (0,0)rectangle(1,1);
\node at(0.5,0.5){\small 1};
\end{scope}
\begin{scope}[scale=0.5, xshift=8.4 cm, yshift=6 cm, gray]
\draw (0,0)rectangle(1,1);
\node at(0.5,0.5){\small 0};
\end{scope}
\begin{scope}[scale=0.5, xshift=11.4 cm, yshift=5 cm]
\draw (0,0)rectangle(1,1);
\node at(0.5,0.5){\small 1};
\end{scope}
\begin{scope}[scale=0.5, xshift=15.4 cm, yshift=6 cm,cyan]
\draw (0,0)rectangle(1,1);
\node at(0.5,0.5){\small 2};
\end{scope}
\begin{scope}[scale=0.5, xshift=19.4 cm, yshift=5 cm,cyan]
\draw (0,0)rectangle(1,1);
\node at(0.5,0.5){\small 2};
\end{scope}
\begin{scope}[scale=0.5, xshift=6.4 cm, yshift=9 cm, gray]
\draw (0,0)rectangle(1,1);
\node at(0.5,0.5){\small 0};
\end{scope}\begin{scope}[scale=0.5, xshift=8.4 cm, yshift=9 cm]
\draw (0,0)rectangle(1,1);
\node at(0.5,0.5){\small 1};
\end{scope}\begin{scope}[scale=0.5, xshift=11.4 cm, yshift=10 cm, cyan]
\draw (0,0)rectangle(1,1);
\node at(0.5,0.5){\small 2};
\end{scope}\begin{scope}[scale=0.5, xshift=15.4 cm, yshift=9 cm]
\draw (0,0)rectangle(1,1);
\node at(0.5,0.5){\small 1};
\end{scope}\begin{scope}[scale=0.5, xshift=23.4 cm, yshift=9 cm, cyan]
\draw (0,0) rectangle (1,1);
\node at(0.5,0.5){\small 2};
\end{scope}
\begin{scope}[scale=0.5, xshift=11.4 cm, yshift=13 cm, gray]
\draw (0,0)rectangle(1,1);
\node at(0.5,0.5){\small 0};
\end{scope}
\begin{scope}[scale=0.5, xshift=19.4 cm, yshift=13 cm]
\draw (0,0)rectangle(1,1);
\node at(0.5,0.5){\small 1};
\end{scope}
\begin{scope}[scale=0.5, xshift=27.4 cm, yshift=13 cm, cyan]
\draw (0,0) rectangle (1,1);
\node at(0.5,0.5){\small 2};
\end{scope}
\fill[color=gray,opacity=0.15]
(0,2)--(13.5,2)--(17.5,8)--(6,8);
\end{tikzpicture}
\end{center}
\caption{ The arrangement $\mnshi{2}{C_2}$, depicted above, has 15 dominant 
regions. The arrangement $\mnshi{2}{A_1}$ has 3 dominant regions,
 with  Shi tableau  $\boxed{0},\boxed{1}$ and $\boxed{2}$. In view of Corollary 
 \ref{cor:dregC}, the regions in $\mnshi{2}{C_2}$ are partitioned into three
equinumerous sets, having the same image under $\phi$.}
\label{fig:proj}
\end{figure}

\medskip
We conclude this section by  discussing the problems we encounter when 
we try to formulate an analogue of Theorem \ref{theor:main} for the type $B$ 
case. Without delving into details, we mention that  Shi tableaux of type $B_n$ 
coincide with self-conjugate Shi tableaux of type $A_{2n}$
whose main diagonal is empty \cite{sh-nost-97}.
Since the map $\mathsf T$ in Proposition \ref{prop:inverse}  cannot be 
applied unless all Shi conditions are known, 
we seek a unique way to determine the empty entries
of a type $B_n$ Shi tableau, so that Shi conditions are preserved. 
With this in mind, there is a natural way to go from the 
Shi tableau $T'$ of an alcove $w'\aA_0\in\aA_+(C_n)$
to  the tableau $T$ of an alcove $w\aA_0\in\aA_+(B_n)$: 
if $\delta_1,\ldots,\delta_n$ are the entries of the main diagonal of $T'$,
insert an $n$-th column and an $n$-th row to $T'$  with entries 
$\lfloor\tfrac{\delta_i-1}{2}\rfloor$ and then delete 
all $\delta_i$'s from  the main diagonal (see Figure \ref{typeBfig}). 
 It is not hard to see that the inserted entries
 preserve the Shi conditions on the new tableau $T$. 
In terms of  abacus diagrams, the above procedure corresponds to 
inserting  $n^0$ in  the (central entry of the) diagram of $w'$  and
replacing each $k^{\ell_k}$ with $k>n$ by $(k+1)^{\ell_k}$. 
Two  examples of this map are shown in Figure \ref{typeBfig}.

Unfortunately, the map described above in not a bijection. 
As one can see in Figure \ref{typeBfig}, two different 
tableaux $T'$ of alcoves in $\aA_+(C_3)$
map to the same tableau in $\aA_+(B_3)$.
Even our hope of it being a bijection when restricted to 
$m$-minimal alcoves is dissolved by the same example;
both type $C$ tableau, which are $3$-minimal,  map 
on the same type B tableau. 
Thus, our wish to associate each $m$-minimal alcove in $\aA_+^m(C_n)$
with one such alcove in $\aA_+^m(B_n)$, cannot be worked out.

Maybe, the only approach to find the desired bijection, is 
to use the formal definition of $\widetilde B_n$ as the subgroup 
of  even permutations of $\widetilde C_{n}$ \cite[Section 2]{ahj-rcosc-13},
\cite[Section 8.5]{bb_ccg_04}. In this case though, it is not evident how 
one could exploit evenness to produce self-conjugate Shi tableau
with empty main diagonal. 

\begin{figure}[h]
\begin{tikzpicture}[scale=0.65]
\begin{scope}[scale=0.8,xshift=0cm,yshift=0cm] 
\draw[line width= 0.5pt](1,1)rectangle(2,4);  
\draw[line width=0.5pt](2,2)rectangle(3,4);
\draw[line width= 0.5pt](3,3)rectangle(4,4);
\draw[line width= 0.5pt](1,2)--(2,2);
\draw[line width= 0.5pt](1,3)--(3,3);
\node at(1.5,3.5) {$3$};\node at(2.5,3.5) {$2$};\node at(3.5, 3.5){$0$};
\node at(1.5,2.5) {$2$};\node at(2.5,2.5) {$1$};
\node at(1.5,1.5) {$0$};
\node at(5,2){$\Longrightarrow$}; 

\node[text width=7cm]at(6,-1){\footnotesize
  $w'=[4^{-2},3^{-1},2^1,1^2]$ \hspace{0.4cm} $w=[5^{-2},4^{-1},3^0,2^1,1^2]$
%
};
\end{scope}
\begin{scope}[scale=0.8,xshift=6cm,yshift=0cm] 
\draw[line width= 0.5pt](0,0)rectangle(1,4);
\draw[line width= 0.5pt](1,1)rectangle(2,4);  
\draw[line width=0.5pt](2,2)rectangle(3,4);
\draw[line width= 0.5pt](3,3)rectangle(4,4);
\draw[line width= 0.5pt](0,1)--(1,1);
\draw[line width= 0.5pt] (0,2)--(2,2);
\draw[line width= 0.5pt] (0,3)--(3,3);
\node at(.5,3.5){$\xcancel{3}$};\node at(1.5,3.5){$2$};\node at(2.5,3.5) 
{$\color{gray}1$};\node at(3.5, 3.5){$0$};
\node at(.5,2.5){$2$};\node at(1.5,2.5){$\xcancel{1}$};
\node at(2.5,2.5){$\color{gray}0$};
\node at(.5,1.5){$\color{gray}1$};\node at(1.5,1.5){$\color{gray}0$};
\node at(.5,.5){$0$}; 
\end{scope}

\begin{scope}[scale=0.8,xshift=15cm,yshift=0cm] 
\draw[line width= 0.5pt](1,1)rectangle(2,4);  
\draw[line width=0.5pt](2,2)rectangle(3,4);
\draw[line width= 0.5pt](3,3)rectangle(4,4);
\draw[line width= 0.5pt] (1,2)--(2,2);
\draw[line width= 0.5pt] (1,3)--(3,3);
\node at(1.5,3.5) {$2$};\node at(2.5,3.5) {$2$};\node at(3.5, 3.5){$0$};
\node at(1.5,2.5) {$2$};\node at(2.5,2.5) {$1$};
\node at(1.5,1.5) {$0$};
\node at(5,2){$\Longrightarrow$}; 

\node[text width=7cm]at(6,-1){\footnotesize 
  $w'=[1^{-1},3^{-1},2^1,4^1]$ \hspace{0.4cm} $w=[1^{-1},4^{-1},3^0,2^1,5^1]$
%
}; 
\end{scope}

\begin{scope}[scale=0.8,xshift=21cm,yshift=0cm] 
\draw[line width= 0.5pt](0,0)rectangle(1,4);
\draw[line width= 0.5pt](1,1)rectangle(2,4);  
\draw[line width=0.5pt](2,2)rectangle(3,4);
\draw[line width= 0.5pt](3,3)rectangle(4,4);
\draw[line width= 0.5pt](0,1)--(1,1);
\draw[line width= 0.5pt] (0,2)--(2,2);
\draw[line width= 0.5pt] (0,3)--(3,3);
\node at(.5,3.5){$\xcancel{2}$};\node at(1.5,3.5){$2$};\node at(2.5,3.5) 
{$\color{gray}1$};\node at(3.5, 3.5){$0$};
\node at(.5,2.5){$2$};\node at(1.5,2.5){$\xcancel{1}$};
\node at(2.5,2.5){$\color{gray}0$};
\node at(.5,1.5){$\color{gray}1$};\node at(1.5,1.5){$\color{gray}0$};
\node at(.5,.5){$0$}; 
\end{scope}
\end{tikzpicture}
\caption{}
\label{typeBfig}
\end{figure}

\section{Lattice paths and \bjj}
\label{sec:latt}
Given $n,m\geq{1}$, we denote by $\lp{m}{n}$ the set of all $N-E$ lattice paths 
from $(0,0)$ to $(n,mn)$ and by $\mathcal{D}_{n}^m$ the set of all  $m$-Dyck 
paths of height $n$. As we already mentioned in the introduction, the above 
sets are enumerated by Catalan numbers i.e.,  
$|\mathcal{D}_{n}^m|=\mcatn{m}{A_{n-1}}$
and  $|\lp{m}{n}|=\tbinom{mn+n}{n}=\mcatn{m}{C_n}$. Thus, the relation
$|\lp{m}{n}|=|\mathcal{D}_{n}^m|(mn+1)$ can be viewed as another instance of
\eqref{catAC} to be explained  bijectively. In Theorem  \ref{bij:paths} we
provide a natural, but rather hidden, bijection between the sets $\lp{m}{n}$
and $\dyp{m}{n}\times\{0,\dots,mn\}$.

Before continuing, we  introduce definitions and notation. Every path
$\pa\in\lp{m}{n}$ can uniquely be
determined by its step sequence $(s_1,\dots,s_n)$, where
$s_i$ denotes the number of east steps occurring before the $i$-th north step
(see Figure \ref{fig:shifted_partitions_1}).
Thus, we can write $\pa=(s_1,\dots,s_n)$ as well.
Notice that $0\leq s_1\leq \cdots\leq s_n\leq mn$
for all paths in $\lp{m}{n}$, while $\pa\in\dyp{m}{n}$
if and only if $s_i\leq m(i-1)$ for every $1\leq i\leq n$.

\begin{definition}
	For each  $\pa=(s_1,\dots,s_n)\in\lp{m}{n}$ and  $0\leq i\leq n-1$,
  we define its \em{$i$-th permutation} $d_i(\pa)$  to be the path in
  $\lp{m}{n}$
	with step sequence
	$(0,s_{i+2}-s_{i+1},\dots,s_n-s_{i+1},\bar{s}_1-s_{i+1},\bar{s}_2-s_{i+1},
  \dots,\bar{s}_{i}-s_{i+1})$,
	where $\bar{s}_j=s_j+mn+1$.
\end{definition}
Pictorially, one may think of $d_i(\pa)$ as follows.
For each $\pa\in\lp{m}{n}$, let $\pa''$ be the path
with step sequence $(s_1,s_2,\dots, s_n,\bar{s}_1,\bar{s}_2,\dots,\bar{s}_n)$
i.e., $\pa''$ is the concatenation of $\pa$, an east step $E$ and one more copy
of $\pa$. Then,  $d_i(\pa)$ corresponds to the subpath of $\pa''$ starting with 
its  $i$-th and finishing before its $(n+i)$-th north step.
Although in general $d_i(\pa)$ is not an $m$-Dyck path,
there exists a unique $i_0$ for which this is true.
This is the content of the next Proposition,
which is motivated from  \cite[Chapter 1.4]{mo-lpc-79}.

\begin{proposition}
	\label{prop:hiddendyck}
	For each $\pa\in\lp{m}{n}$ there exists a unique index $0\leq i_0\leq{n-1}$,
  for which $d_{i_0}(\pa)\in\dyp{m}{n}$.
\end{proposition}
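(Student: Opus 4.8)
The plan is to recast the whole statement in terms of the successive ``gaps'' between north steps and then reduce it to a cycle lemma on partial sums. Given $\pa=(s_1,\dots,s_n)\in\lp{m}{n}$, I would set $g_j=s_{j+1}-s_j$ for $1\le j\le n-1$ and $g_n=s_1+mn+1-s_n$, so that $(g_1,\dots,g_n)$ records the east-gaps in the periodic word $\pa''$; note $g_j\ge 0$ for all $j$ and in fact $g_n\ge 1$. Directly from the definition of the $i$-th permutation, the step sequence of $d_i(\pa)$ is the cyclic rotation $t_j=\sum_{l=1}^{j-1}g_{i+l}$ (indices taken mod $n$), for $1\le j\le n$. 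Since $\sum_{j=1}^n g_j=mn+1$, writing $h_j=g_j-m$ gives $\sum_{j=1}^n h_j=1$, and the Dyck condition $t_j\le m(j-1)$ for $d_i(\pa)$ becomes exactly $P^{(i)}_J:=\sum_{l=1}^{J}h_{i+l}\le 0$ for all $1\le J\le n-1$ (the case $J=0$ being the trivial $t_1=0$). Because $g_l\ge 0$ the entries $t_j$ are automatically non-decreasing, and the strongest inequality $J=n-1$ forces $t_n\le m(n-1)\le mn$; hence any rotation satisfying these conditions is automatically a genuine path in $\lp{m}{n}$, so admissibility never has to be checked separately.

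Thus I would reduce the proposition to the following cycle lemma: \emph{if $h_1,\dots,h_n\in\ZZ$ satisfy $\sum_j h_j=1$, then there is a unique $i_0\in\{0,\dots,n-1\}$ with $P^{(i_0)}_J\le 0$ for all $1\le J\le n-1$.} To prove it I would introduce the partial sums $S_k=\sum_{l=1}^{k}h_l$, extended to all $k\ge 0$ by periodicity of the $h_l$, so that $S_{k+n}=S_k+1$. Then $P^{(i)}_J=S_{i+J}-S_i$, and the Dyck condition for $d_i(\pa)$ reads simply $S_{i+J}\le S_i$ for $J=1,\dots,n-1$.

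The candidate is $i_0:=$ the smallest index in $\{0,\dots,n-1\}$ attaining $M:=\max_{0\le k\le n-1}S_k$. To see that $i_0$ works, fix $1\le J\le n-1$: if $i_0+J\le n-1$ then $S_{i_0+J}\le M=S_{i_0}$; if $i_0+J\ge n$, write $i_0+J=n+r$ with $0\le r\le i_0-1$, so that $S_{i_0+J}=S_r+1\le M=S_{i_0}$, using $S_r<M$ since $r<i_0$ and $i_0$ is the \emph{smallest} maximizer. For uniqueness, suppose $i'\ne i_0$ also works. If $i'<i_0$, then taking $J=i_0-i'\in\{1,\dots,n-1\}$ yields $S_{i_0}\le S_{i'}<M=S_{i_0}$, a contradiction; if $i'>i_0$, then taking $J=n-(i'-i_0)\in\{1,\dots,n-1\}$ gives $i'+J=n+i_0$, whence $M+1=S_{n+i_0}\le S_{i'}\le M$, again a contradiction. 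Therefore $i_0$ is the unique valid index, and translating back, $d_{i_0}(\pa)$ is the unique $m$-Dyck path among the $d_i(\pa)$.

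The main obstacle is bookkeeping the periodic increment $S_{k+n}=S_k+1$: because each full turn raises the partial sums by exactly one unit rather than returning to the same value, the verification must be split into the wrapped range ($i_0+J\ge n$) and the unwrapped range, and the extremal maximizer must be chosen on the correct side so that this single unit of positive drift pins down exactly one rotation. Once the gap-to-partial-sum dictionary above is set up, this is the only delicate point; the remaining translations between the encodings are routine and I would state them once and keep them implicit.
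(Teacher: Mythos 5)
Your proof is correct, and it takes a genuinely different route from the paper's. The paper argues geometrically: it draws the unique line of slope $\tfrac1m$ tangent to $\pa$ from below, takes $A_{i_0}$ to be the first point where $\pa$ meets that line, and checks directly (via the equivalence \eqref{Ai}) that the corresponding window of $\pa''$ stays above the line. You instead pass to the east-gaps $g_j$, normalize to $h_j=g_j-m$ with $\sum_j h_j=1$, and prove a self-contained cycle lemma: the unique admissible rotation is the one starting at the first maximizer of the partial sums $S_k$, with the periodic drift $S_{k+n}=S_k+1$ doing the work in both directions. The two constructions select the same index, since maximizing $S_{i-1}=(s_i-s_1)-m(i-1)$ is exactly locating the first tangency point of the slope-$\tfrac1m$ support line; this is the Dvoretzky--Motzkin idea underlying the reference \cite{mo-lpc-79} that the paper cites as motivation. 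What your version buys is an explicit uniqueness argument: the paper's proof constructs $i_0$ and verifies that $d_{i_0}(\pa)$ is an $m$-Dyck path, but does not spell out why no other rotation works, whereas your two contradiction cases ($i'<i_0$ and $i'>i_0$) settle this cleanly. What the paper's version buys is the picture (Figure \ref{fig:shifted_partitions_1}) and a formulation that matches the later description of the inverse map in Theorem \ref{bij:paths}. Your reductions (the formula $t_j=\sum_{l=1}^{j-1}g_{i+l}$ for the rotated step sequence, the equivalence of the Dyck condition with $P^{(i)}_J\le 0$, and the observation that admissibility in $\lp{m}{n}$ is automatic) all check out.
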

\begin{proof}
  For each $\pa\in\lp{m}{n}$ let $\pa''$ be the path
  with step sequence
  $(s_1,s_2,\dots,s_n,\bar{s}_1,\bar{s}_2,\dots,\bar{s}_n)$.
  Set $O=(0,0)$,  $Q_1 = (mn,n)$, $Q_2 = (mn+1, n)$ and
  $R=(2mn+1,2n)$, so that  $\pa$ is the subpath of $\pa''$ from $O$ to $Q_1$
   and $\pa'$, its shifted by $(mn+1,n)$ copy, is the subpath from $Q_2$ to
   $R$. Next, among all lines of slope $\tfrac{1}{m}$,  consider the unique
  $\varepsilon_{k_0}:y=\tfrac{1}{m}x+k_0$ tangent-to  and containing  $\pa$
   in its closed upper half-plane $\overline H_{k_0}^+$. For every $0\leq
   i\leq n-1$ set $A_i=(s_i,i-1)$ and $\bar{A}_i=(\bar{s}_i,n+i-1)$. Notice
   that  $A_i\in\pa$, while  $\bar{A}_i$  is the corresponding point on $\pa'$.
  The line $\varepsilon_{k_0}$ has the property that  $\bar{A}_i\in\overline
  H_{k_0}^+$ if  and only if   $A_i\in H_{k_0}^+$.  Indeed,
  \begin{equation}
  \label{Ai}
  \begin{split}
  \bar{A}_i\in\overline H_{k_0}^+
  \Leftrightarrow\; & n+i-1\geq\tfrac{1}{m}\bar s_i+k_0\\
  \Leftrightarrow\; & n+i-1\geq\tfrac{1}{m}(s_i+mn+1)+k_0\\
  \Leftrightarrow\; & i-1\geq\tfrac{1}{m}(s_i+1)+k_0 >\tfrac{1}{m}s_i+k_0
  \Leftrightarrow\;  A_i\in H_{k_0}^+.
  \end{split}
  \end{equation}
  Let $i_0:=\min\{i\,|\,A_i \in \pa\cap \varepsilon_{k_0}\}$,
  so that $A_{i_0}$ is the first point of intersection of
  $\pa$ with $\varepsilon_{k_0}$.
  We claim that the subpath $\pa_{k_0}$ of $\pa''$ with endpoints
  $A_{i_0}$ and $B_{i_0}=\bar A_{i_0}-(1,0)$
  lies above the line $\varepsilon_{k_0}$,
  and thus defines an $m$-Dyck path of height $n$.
  Indeed, the part of $\pa_{k_0}$ from $A_{i_0}$
  to $Q_1$ is in $\overline H^+_{k_0}$ by
  construction of $\varepsilon_{k_0}$.
  The part of $\pa_{k_0}$ from  $Q_2$ to $B_{i_0}$ lies in
  $\overline H^+_{k_0}$
  since otherwise, in view of \eqref{Ai},  the existence of some  $\bar A_j$
  $\nin\overline H^+_{k_0}$  would imply the existence of an $A_j$ preceding
  $A_{i_0}$  with  $A_j\nin H^+_{k_0}$.  This would contradict the fact that
  $A_{i_0}$ is the first point where $\varepsilon_{k_0}$ intersects $\pa$.
  \end{proof}

 The reader is invited to follow the steps of the proof of Proposition
 \ref{prop:hiddendyck} in  Figure \ref{fig:shifted_partitions_1}.
 
 \begin{figure}[h]
\begin{center}
\begin{tikzpicture}[scale=0.35]
\begin{scope}
\fill[pattern=north west lines, pattern  color=mycyan!50] 
(0,2)--(11,2)--(11,7)--(0,7);

\fill[pattern=north east lines, pattern  color=mycyan2!50] 
(11,3)--(14,3)--(14,4)--(17,4)--(17,5)--(20,5)--(20,6)--(23,6)--(23,7)
--(11,7);

\draw [step=1.0,gray!50] (0,0) grid (15,5);

\draw [step=1.0,gray!50] (16,5) grid (31,10);

\draw (2,-1)--(35,10);
\node at (2,-1.4){\fs $\varepsilon_{k_0}$};

\node at (0,-0.6){\fs $P$};
\node at (0,0){\tiny $\bullet$};
\filldraw (0,0) circle (0.16cm);
\node at (11,1.3){\fs $A_{i_0}$};

\filldraw (15,5) circle (0.12cm);
\node at (15,5.6){\fs $Q_1$};
\filldraw (16,5) circle (0.12cm);
\node at (16.2,4.3){\fs $Q_2$};

\filldraw (31,10) circle (0.16cm);
\node at (31,10.5){\fs $R$};

\node at (26,6.3){\fs$B_{i_0}$};
\node at (27.9,6.4){\fs$\overline A_{i_0}$};
\draw[line width=1] 
(0,0)--(1,0)--(1,1)--(4,1)--(4,2)--(11,2)--(11,3)--(12,3)--(12,4)--(14,4)
--(14,5)--(15,5);

\draw[dotted,line width=1] (15,5)--(16,5); 

\draw[line width=1] (16,5)--(17,5)--(17,6)--(20,6)--(20,7)--(27,7)
--(27,8)--(28,8)--(28,9)--(30,9)--(30,10)--(31,10); 

\filldraw[fill=white] (27,7) circle (0.16cm);
\filldraw[fill=white] (11,2) circle (0.16cm);
\filldraw[fill=white] (12,3) circle (0.16cm);
\filldraw[fill=white] (14,4) circle (0.16cm);
\filldraw[fill=white] (17,5) circle (0.16cm);
\filldraw[fill=white] (20,6) circle (0.16cm);
\filldraw (26,7) circle (0.16cm);
\end{scope}

\end{tikzpicture}
\end{center}

 \caption{
   The path $\pa=ENEEENEEEEEEENENEENE$ from $P$ to $Q_1$
   has step sequence $(1,4,11,12,14)$ and belongs to $\lp{3}{5}$.
   The path $\pa''$ from $P$ to $R$, is the concatenation of $\pa$, $E$ and
   $\pa$. Since $m=3$, we consider the line $\varepsilon$  with slope
   $1/3$, tangent-to and containing $\pa$ in its upper half-space. Its
   first intersection point with $\pa$ is $A_{i_0}$. In the present
   case it is $A_3$. Thus, we deduce  that $i_0=3$ and
   $d_{i_0}(\pa)=(0,1,3,6,9)$.}
  \label{fig:shifted_partitions_1}
  
  \end{figure}
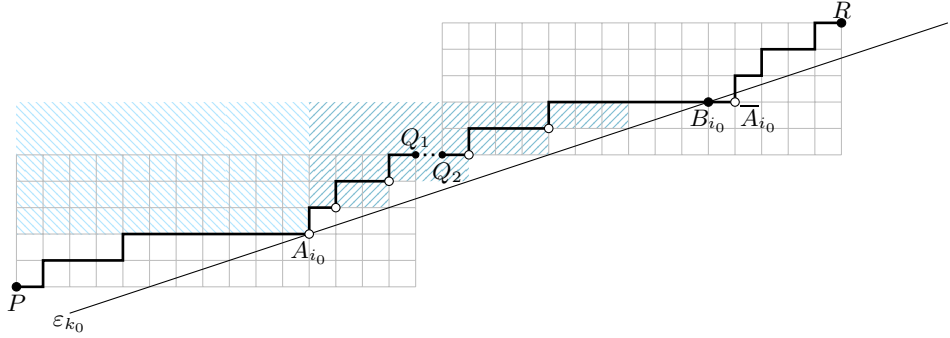

  The next theorem, which follows  from Proposition
  \ref{prop:hiddendyck}, describes (the inverse of) \bjj.
\begin{theorem}
\label{bij:paths}
The map $h:\lp{m}{n}\rightarrow \dyp{m}{n}\times\{0,\ldots,mn\}$
with $h(\pa)=(d_{i_0}(\pa),s_{i_0})$ is a bijection.
\end{theorem}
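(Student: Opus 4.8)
The plan is to prove that $h$ is a bijection by exhibiting an explicit left inverse and then invoking a cardinality count, which together force bijectivity. First I would record that both sides are finite of the same size. A path in $\lp{m}{n}$ is determined by its non-decreasing step sequence, so $|\lp{m}{n}|=\binom{(m+1)n}{n}=\mcatn{m}{C_n}$, whereas $|\dyp{m}{n}\times\{0,\dots,mn\}|=(mn+1)\,\mcatn{m}{A_{n-1}}$; these agree by Relation~\eqref{catAC}. Hence it will be enough to prove that $h$ is injective, and I would obtain this by constructing a map $g$ with $g\circ h=\mathrm{id}$.

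I would define $g:\dyp{m}{n}\times\{0,\dots,mn\}\to\lp{m}{n}$ as follows. Given $(\mathcal Q,t)$ with step sequence $\mathcal Q=(u_1,\dots,u_n)$, form the multiset $M=\{(u_j+t)\bmod(mn+1):1\le j\le n\}$ and let $g(\mathcal Q,t)$ be the path of $\lp{m}{n}$ whose step sequence is $M$ written in increasing order. This is well defined, since every element of $M$ lies in $\{0,\dots,mn\}$ and therefore constitutes a legitimate step sequence; note that $g$ does not even use that $\mathcal Q$ is an $m$-Dyck path.

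The heart of the argument is the identity $g\circ h=\mathrm{id}$. Fix $\pa=(s_1,\dots,s_n)\in\lp{m}{n}$ and let $h(\pa)=(d_{i_0}(\pa),s_{i_0})$, where $i_0$ is the unique index furnished by Proposition~\ref{prop:hiddendyck}; put $t=s_{i_0}$. Unwinding the definition of the rotation $d_{i_0}$, its entries are $0$ (at the cut), together with $s_\ell-t$ for the indices $\ell>i_0$ following the cut and $\bar s_\ell-t=s_\ell+mn+1-t$ for the indices $\ell<i_0$ preceding it. Adding $t$ back to each entry returns the values $t=s_{i_0}$, the numbers $s_\ell$ with $\ell>i_0$, and the shifted numbers $s_\ell+mn+1$ with $\ell<i_0$. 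Since each $s_\ell$ and $t$ lie in $\{0,\dots,mn\}$ while $s_\ell+mn+1\equiv s_\ell\pmod{mn+1}$, reduction modulo $mn+1$ recovers exactly the multiset $\{s_1,\dots,s_n\}$. Thus $g(h(\pa))$ has the same sorted step sequence as $\pa$, i.e. $g(h(\pa))=\pa$. Injectivity of $h$ follows immediately, and the cardinality equality upgrades it to a bijection with $g=h^{-1}$.

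I expect the only genuinely delicate points to be bookkeeping rather than conceptual. The conceptual crux, namely that the $m$-Dyck rotation index $i_0$ exists and is unique, is already supplied by Proposition~\ref{prop:hiddendyck}; what I would isolate and state carefully is that the recorded shift $s_{i_0}$ is precisely the constant subtracted when forming $d_{i_0}(\pa)$, so that adding it back cleanly inverts the horizontal normalization. The reduction modulo $mn+1$ is exactly the device that renders the wrap-around contribution $\bar s_\ell=s_\ell+mn+1$ invisible, and this is the mechanism responsible for the $(mn+1)$-fold multiplicity; getting the index alignment right there is the part most prone to off-by-one errors, so I would pin it down against the worked instance in Figure~\ref{fig:shifted_partitions_1} before writing the final computation.
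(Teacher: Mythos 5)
Your proof is correct, and it reaches the same inverse map as the paper by a slightly different route. Your $g(\mathcal Q,t)$ — add $t$ to every step, reduce modulo $mn+1$, and sort — coincides with the paper's explicit two-block formula $\overline\pa=(s_{j_0+1}+k-mn-1,\dots,s_n+k-mn-1,s_1+k,\dots,s_{j_0}+k)$, since each shifted entry lies in $\{0,\dots,2mn\}$ and so is reduced at most once; the paper just writes the sorted output directly via the cutoff index $j_0$. The difference is in how bijectivity is closed: the paper checks that $\overline\pa$ really lands in $\lp{m}{n}$ (monotonicity and the bound $s_n\le mn+1$) and then ``leaves the reader to verify'' that the two maps are mutually inverse, whereas you verify $g\circ h=\mathrm{id}$ in full — arguably the more substantive half, and the half the paper omits — and replace the remaining check $h\circ g=\mathrm{id}$ by the cardinality count $|\lp{m}{n}|=(mn+1)\,|\dyp{m}{n}|$. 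That count is stated in the paper and is standard, so your argument is logically sound; the one thing it costs you is self-containedness: since the stated purpose of ${\sf Bj}_2$ is to explain Relation~\eqref{catAC} bijectively, invoking the Fuss--Catalan enumeration of $\dyp{m}{n}$ to finish the proof is mildly circular in spirit, and a direct verification of $h\circ g=\mathrm{id}$ (locating the tangent line of $g(\mathcal Q,t)$ and checking that the rotation index recovers the cut) would be preferable and is not hard from your setup. Your caution about the index alignment is warranted — the paper's own Definition of $d_i$ subtracts $s_{i+1}$, which is off by one from the statement $h(\pa)=(d_{i_0}(\pa),s_{i_0})$ and from its worked example — and the reading you adopt (cut at $i_0$, subtract $s_{i_0}$) is the one that makes the theorem consistent.
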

\begin{proof}
The forward direction of the map $h$ is immediate from Proposition
\ref{prop:hiddendyck}.
To prove the reverse, we consider an $m$-Dyck path $\pa=(s_1,\ldots,s_n)$
and an integer $0\leq{k}\leq{mn}$.
Let $1\leq{}j_0\leq{n}$ be the  unique index  for which $s_{j}+k\leq{mn}$
for all $j\leq{j_0}$ and $s_{j}>mn$ otherwise.
We set $$\overline\pa=(\bar s_1,\ldots,\bar s_n)
:=(s_{j_0+1}+k-mn-1,\ldots,s_n+k-mn-1,s_1+k,\ldots,s_{j_0}+k)$$
and show that $\overline\pa\in\lp{m}{n}$
(see Figure \ref{fig:shifted_partitions_reverse}
for a pictorial description of $\overline \pa$).
By the choice of $j_0$ and $k$ and recalling that
$s_i\leq m(n-i)$, one can easily deduce that $0\leq\bar s_i\leq{mn}$
for all $i$. Thus, for showing that $\pa\in\lp{m}{n}$,
 it remains to prove that $\bar s_1\leq\cdots\leq\bar s_n$. If
$j_0=n$ the claim is immediate (since then $\overline\pa=\pa$), while if
$j_0<n$  we only need to verify that
$s_{n}+k-mn-1\leq{}s_1+k$. Bearing in mind that $s_1=0$ for all Dyck paths, the
above simplifies to showing that $s_n\leq{mn+1}$, which is true for all
$\pa\in\dyp{m}{n}$.

We leave the reader to verify that the map sending each pair $(\pa,k)$
to $\overline\pa$ is indeed the reverse of $h$.
\end{proof}

\begin{figure}[h]
\begin{center}
\begin{tikzpicture}[scale=0.3]
\begin{scope}
\fill[pattern=north west lines, pattern  color=mycyan!50] 
(0,2)--(11,2)--(11,7)--(0,7);

\fill[pattern=north east lines, pattern  color=mycyan2!50] 
(11,3)--(14,3)--(14,4)--(17,4)--(17,5)--(20,5)--(20,6)--(23,6)--(23,7)
--(11,7);

\draw [step=1.0,gray!50] (0,2) grid (15,7);
\draw[line width=1,color=mycyan] (16,5)--(17,5)--(17,6)--(20,6)--(20,7)--(23,7);
\draw[color=mycyan,line width=1] (15,5)--(16,5); 
\filldraw (15,5) circle (0.12cm);
\node at (15,5.6){\fs $A$};
\filldraw (16,5) circle (0.12cm);
\node at (16,5.6){\fs $B$};

\draw[line width=1,dashed] (0,2)--(11,2);
\draw[line width=1] 
(11,2)--(11,3)--(12,3)--(12,4)--(14,4)--(14,5)--(15,5);
\filldraw[fill=white] (11,2) circle (0.16cm);
\filldraw[fill=white] (12,3) circle (0.16cm);
\filldraw[fill=white] (14,4) circle (0.16cm);
\filldraw[fill=white] (17,5) circle (0.16cm);
\filldraw[fill=white] (20,6) circle (0.16cm);

\node at (0,1.5){\fs$P$};
\node at (23.5,7){\fs$Q$};

\draw[xshift=24cm,yshift=4cm,mycyan!70,line width=3pt, 
-stealth,rotate=0] 
(0,0).. controls (1,0.2) and (2,0.2)  .. (3,0);

\end{scope}

\begin{scope}[xshift=29cm]
\fill[pattern=north west lines, pattern  color=mycyan!50] 
(0,2)--(11,2)--(11,5)--(0,5);

\fill[pattern=north east lines, pattern  color=mycyan2!50] 
(11,3)--(14,3)--(14,4)--(17,4)--(17,5)--(20,5)--(11,5);

\draw [step=1.0,gray!50] (0,0) grid (15,5);

\node at (0,-0.6){\fs $B$};

\filldraw (15,5) circle (0.12cm);
\node at (15,5.6){\fs $A$};

\draw[line width=1,color=mycyan] 
(0,0)--(1,0)--(1,1)--(4,1)--(4,2)--(7,2);
\draw[line width=1,dashed]
(7,2)--(11,2); 

\draw[line width=1] 
(11,2)--(11,3)--(12,3)--(12,4)--(14,4)--(14,5)--(15,5);
\filldraw[fill=white] (11,2) circle (0.16cm);
\filldraw[fill=white] (12,3) circle (0.16cm);
\filldraw[fill=white] (14,4) circle (0.16cm);
\filldraw[color=mycyan,fill=white] (1,0) circle (0.16cm);
\filldraw[color=mycyan,fill=white] (4,1) circle (0.16cm);
\filldraw (0,0) circle (0.12cm);
\end{scope}

\end{tikzpicture}
\end{center}

\caption{Let $\pa=(0,1,3,6,9)\in\dyp{3}{5}$ and $k=11$. Consider the path $PQ$
  we obtain when shifting $\pa$ by 11 east steps, placed at the origin of    a
  $15\times 5$ grid (left). Let $AB$ be the first (horizontal) step of the path
  that exceeds the grid. The path $\overline \pa$ is obtained by switching the
  subpaths $PA$ and $BQ$, so that the latter begins at the origin and the former
  ends at the last point, of a $15\times{5}$ grid (right).
}
\label{fig:shifted_partitions_reverse}
  \end{figure}
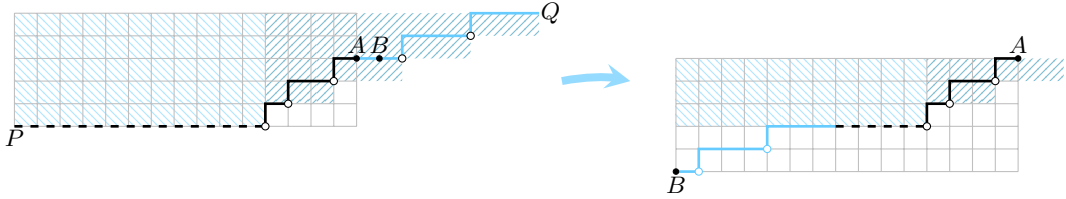


The following theorem completes the $\mbox{FKT}_1$ direction of the diagram of
Section \ref{intro}.
\begin{theorem}\cite[Theorem~3.1]{fkt-fgcc-13}
\label{bij:kij}
  There exists an explicit bijection between
  dominant regions in $\rR_+^m(A_{n-1})$
  and $m$-Dyck paths in $\dyp{m}{n}$.
\end{theorem}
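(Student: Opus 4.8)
The plan is to realize the bijection as a composition: first pass from regions to a combinatorial model via the tools of Section~\ref{sec:mShi}, then encode that model as a Dyck path. Concretely, I would send a region $\rR\in\rR_+^m(A_{n-1})$ to its $m$-minimal alcove $w\aA_0$ (Section~\ref{sec:mShi}) and then to the Shi tableau $\mathsf T(w)$ of Proposition~\ref{prop:shi}. By Theorem~\ref{thm:mconditions}, the alcoves arising as $m$-minimal are exactly those whose level vector $\vec{n}(w)=(\beta_1,\dots,\beta_n)$ satisfies $\sum_i\beta_i=0$, $\beta_{i+1}-\beta_i\le m$ for $1\le i\le n-1$, and $\beta_1-\beta_n-1\le m$. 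It therefore suffices to biject the level vectors satisfying these conditions with the step sequences of $m$-Dyck paths, i.e.\ with the integer sequences $0=s_1\le s_2\le\cdots\le s_n$ obeying $s_i\le m(i-1)$.

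For the forward map I would read the Dyck path off the tableau rather than off the level vector directly. Passing from the $m$-minimal alcove to the region tableau (Corollary~\ref{cor:same_region}) produces a filling $c_{ij}\in\{0,1,\dots,m\}$ still obeying the Shi conditions; its super-level sets $\{\alpha_{ij}:c_{ij}\ge t\}$, for $1\le t\le m$, form a nested chain of subsets of the root poset of $A_{n-1}$, a geometric chain of ideals in the sense of \cite{ath-rgcn-05}. Tracing the common boundary of this chain inside the staircase shape yields a lattice path, and I would verify that the convexity forced by the Shi conditions together with the bound $c_{ij}\le m$ makes it an $m$-Dyck path; this verification is precisely the inequality $s_i\le m(i-1)$, and it is here that $m$-minimality is indispensable. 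For the inverse, Proposition~\ref{prop:inverse} and Lemma~\ref{cor:inverse} carry the load: from an $m$-Dyck path I recover the nested chain of ideals, hence a Shi tableau $T$ with entries in $\{0,\dots,m\}$, apply $\mathsf B$ (Proposition~\ref{prop:inverse}) and Lemma~\ref{cor:inverse} to obtain an explicit alcove $w\aA_0\in\mathcal A_+(A_{n-1})$, and confirm via Theorem~\ref{thm:mconditions} that it is $m$-minimal, so that it represents a genuine region of \mshi{A_{n-1}}.

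The main obstacle will be injectivity and surjectivity of the forward map, because a region is genuinely a global object: the naive hope that it is determined by the first row of $\mathsf T(w)$ fails---for $A_2$ with $m=1$ the level vectors $(0,1,-1)$ and $(1,0,-1)$ both give first row $(0,1)$ yet distinct regions---so the entire level vector must be used. The delicate point is that the admissible level vectors are cut out by the \emph{global} constraints $\sum_i\beta_i=0$ and the cyclic wraparound bound $\beta_1-\beta_n-1\le m$, whereas a Dyck path is cut out by the \emph{local} box constraints $s_i\le m(i-1)$. I expect to reconcile the two by passing to the abacus integers $a_i=i+n\beta_i$: these form a transversal of the residues modulo $n$ summing to $\binom{n+1}{2}$, and under this change all the inequalities take the uniform shape $a_{i+1}-a_i\le mn+1$ (with $a_1-a_n\le mn+1$ for the wraparound). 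A cycle-lemma-type normalization, in the spirit of \cite{mo-lpc-79} used for \bjj, should then single out the canonical representative that the Dyck path encodes and close the argument.
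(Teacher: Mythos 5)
Your proposal is a plan rather than a proof, and the two places where the actual content of the theorem lives are exactly the places you defer. First, the forward map is not actually defined for $m>1$: the super-level sets $\{\alpha_{ij}:c_{ij}\ge t\}$, $1\le t\le m$, form a chain of $m$ nested ideals and hence have $m$ boundary lattice paths, not one, so ``tracing the common boundary of this chain'' does not specify how these $m$ paths are to be merged into a single path with $mn$ east steps and $n$ north steps, nor why the result would satisfy $s_i\le m(i-1)$. (Your remark that $m$-minimality is what forces the bound is also misplaced: the entries of the region tableau lie in $\{0,\dots,m\}$ by the very definition of that tableau, independently of any minimality statement; $m$-minimality only identifies which \emph{alcoves} represent regions.) Second, bijectivity is explicitly left open: the passage to the abacus integers $a_i=i+n\beta_i$ and the observation that the constraints of Theorem~\ref{thm:mconditions} become the uniform cyclic inequalities $a_{i+1}-a_i\le mn+1$ is correct and pleasant, but the concluding sentence that a ``cycle-lemma-type normalization should then single out the canonical representative'' is a hope, not an argument; to invoke the cycle lemma you must exhibit the statistic being rotated and prove that exactly one rotation lands in the target set, and none of that is done here.

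For comparison, the cited result of \cite{fkt-fgcc-13} (the paper itself only sketches it) pins the bijection down by one concrete invariant: writing the step sequence of the $m$-Dyck path, with its first step discarded, as $\lambda_{n-1}\le\cdots\le\lambda_1$, the corresponding region is the \emph{unique} one whose Shi tableau has $i$-th row summing to $\lambda_i$; the substance of the theorem is the existence and uniqueness of a region tableau with prescribed row sums. Your outline shares the general architecture (region $\to$ Shi tableau $\to$ path) but never identifies such a determining invariant, and your $A_2$ example showing that the first row alone does not suffice only underlines that the whole tableau must be controlled --- which is precisely what the row-sum characterization accomplishes and what your sketch leaves open.
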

Without delving into details, the idea of the bijection is as follows. 
We identify each dominant region in $\rR_+^m(A_{n-1})$
with its Shi tableau. For each $m$-Dyck path, we ignore its
first step
\footnote{The first step is always 0 and corresponds to
$s_1=0$.} and  we rewrite its step sequence as
$\lambda_{n-1}\leq\cdots\leq\lambda_{1}$ with $0\leq\lambda_i \leq (n-1-i)m$.
Theorem \ref{bij:kij} shows that there  exists a unique way to construct a Shi
tableau of a dominant region in $\rR_+^m(A_{n-1})$,  whose $i$-th row has
coordinates which sum up to $\lambda_i$, for all $1\leq{i}\leq{n-1}$
(see Figure \ref{fig:shifted_partitions_to_tableau}).

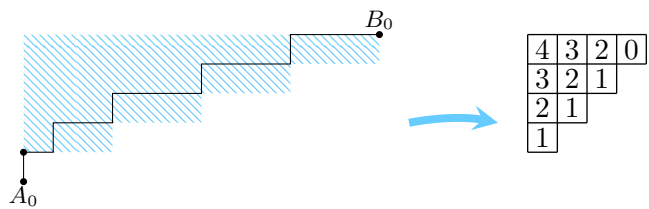
\begin{figure}[h]
\begin{center}
\begin{tikzpicture}[scale=0.65]

\begin{scope}[scale=0.6,xshift=0cm]

\fill[pattern=north west lines, pattern 
color=mycyan](0,0)--(3,0)--(3,1)--(6,1)--(6,2)--(9,2)--(9,3)--(12,3)--(12,4)--
(0,4);

\filldraw (0,-1) circle (0.1cm);
\filldraw (0,0) circle (0.1cm);
\filldraw (12,4) circle (0.1cm);
\draw 
(0,-1)--(0,0)--(1,0)--(1,1)--(3,1)--(3,2)--(6,2)--(6,3)--(9,3)--(9,4)--(12,4);
\node at (0,-1.5){\fs $A_0$};
\node at (12,4.5){\fs $B_0$};

  \draw[xshift=13cm,yshift=1cm,mycyan,line width=3pt, 
  -stealth,rotate=0] 
  (0,0).. controls (1,0.2) and (2,0.2)  .. (3,0);
\end{scope}

\begin{scope}[scale=0.6,xshift=17cm]
\draw[line width= 0.5pt](0,0)--(1,0);
\draw[line width= 0.5pt](1,0)--(1,4);
\draw[line width= 0.5pt](0,0)--(0,4);
\draw[line width= 0.5pt](2,1)--(2,4);  
\draw[line width=0.5pt](3,2)--(3,4);
\draw[line width= 0.5pt](4,3)--(4,4);
\draw[line width= 0.5pt](0,1)--(2,1);
\draw[line width= 0.5pt] (0,2)--(3,2);
\draw[line width= 0.5pt] (0,3)--(4,3);
\draw[line width= 0.5pt] (0,4)--(4,4);
\node at(.5,3.5){$4$};\node at(1.5,3.5){3};\node at(2.5,3.5){2};\node at(3.5, 
3.5){0};
\node at(.5,2.5){3};\node at(1.5,2.5){2};\node at(2.5,2.5){1};
\node at(.5,1.5){2};\node at(1.5,1.5){1};
\node at(.5,.5){1}; 
\end{scope}
\end{tikzpicture}
\end{center}

\caption{For $m=3$, the bijection of Theorem \ref{bij:kij} maps
  the Dyck path $(1,3,6,9)$ to the region Shi tableau depicted on the 
  right.    }
\label{fig:shifted_partitions_to_tableau}

\end{figure}

%

\section*{Acknowledgments.}
The authors are grateful to Eli Bagno for helpful discussions and to Philippe
Nadeau for bringing \cite{mo-lpc-79} to their attention.
The first author was supported by a ``Back-to-Research Grant'' of the University of Vienna.

\bibliographystyle{authordate1}

\bibliography{projections}

\appendix
\renewcommand{\thesection}{\Alph{section}}
\renewcommand{\thetheorem}{\Alph{section}.\arabic{theorem}}

\section{Shi tableaux for dominant regions}
\label{appA}
Throughout the paper, we use the fact that each dominant region in \mshi{\Phi} 
is represented by its $m$-minimal alcove, and build all our arguments upon 
this. In this section we show how we encode each dominant region $\rR$ in a  
tableau $T_{\rR}$, and how we retrieve  the Shi tableau of its $m$-minimal 
alcove from $T_{\rR}$. 

The \emph{Shi tableau of a  dominant region}   $\rR$ in \mshi{\Phi} is the set 
$T_{\rR}= \{r_{\alpha}$, $\alpha\in\Phi^+\}\subset\mathbb{N}$ of coordinates,
where $r_{\alpha}$ counts the number of integer translates $H_{\alpha,k}$
of $H_{\alpha,0}$, that separate $\rR$ from the origin. Since, by definition of 
\mshi{\Phi}, the integer $k$ is bounded by $m$, we deduce that  
$0\leq{}r_{\alpha}\leq{}m.$ As before, the coordinates $r_{\alpha}$ are 
arranged in a tableau according to the root system. The coordinates of 
$T_{\rR}$ yield the face defining inequalities for the region $\rR$. More 
precisely, for each $\alpha\in\Phi^+$ it is $r_{\alpha}=r$ if and only if, for 
all $x\in\rR$:
\begin{equation}
\label{equ:ineq_alcove}
\begin{aligned}
r  & <\langle{}x,\alpha\rangle<r+1 && \mbox{ if }r<m, \\
m & <\langle{}x,\alpha\rangle  && \mbox{ if }r=m.
\end{aligned}
\end{equation}
The above inequalities imply the the following \emph{Shi conditions on a
  region}: for $\alpha,\beta,\gamma\in\Phi^+$ with
$\alpha+\beta=\gamma$, it holds:
\begin{equation}
r_{\alpha}=\begin{cases}
r_{\beta}+r_{\gamma}+\delta_{\beta,\gamma}\hspace{0.1cm}
\mbox{ if }\;\;r_{\beta}+r_{\gamma}<m,\\
m\hspace{2cm}\mbox{otherwise},
\end{cases}
\end{equation}
where $\delta_{\beta,\gamma}\in\{0,1\}.$
\medskip

As we mentioned in  Section~\ref{sec:mShi}, each dominant
region in \mshi{\Phi} is uniquely represented by its $m$-minimal alcove. One
can switch from the tableau of the region $\rR$ to that of its $m$-minimal
alcove $\alc_{\rR}$ and vice versa, as indicated by the following lemma (see
also Figure~\ref{fig:ex_tableau}).

\begin{lemma}\cite[Section 3]{ath-rgcn-05}
  \label{lem:region_to_alcove}
  Let $\rR$ be a  dominant region in \mshi{\Phi} with $m$-minimal alcove
  $\alc_{\rR}$. Let also $T_{\rR}=\{r_{\alpha}:\alpha\in\Phi^+\}$ and
  $T=\{k_{\alpha}:\alpha\in\Phi^+\}$ be the Shi tableau of $\rR$ and
  $\alc_{\rR}$ respectively. Then, for each $\alpha\in\Phi^+$ the
  following relations hold:
  \begin{enumerate}[(i)]
    \item $r_{\alpha}=\min\{m,k_\alpha\}$\quad and
    \item $k_{\alpha}=\max\{k_{\beta}+k_{\gamma}: \alpha=\beta+\gamma
    \mbox{ with } \beta,\gamma\in\Phi^+\}$.
  \end{enumerate}
\end{lemma}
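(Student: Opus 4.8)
The plan is to prove the two relations separately, deriving (i) directly from the inclusion $\alc_{\rR}\subseteq\rR$ and (ii) from the Shi conditions together with the minimality of $\alc_{\rR}$.

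For (i), I would use that the $m$-minimal alcove is one of the alcoves constituting $\rR$, so $\alc_{\rR}\subseteq\rR$ and every $x\in\alc_{\rR}$ also lies in $\rR$. Fixing $\alpha\in\Phi^+$ and applying simultaneously the alcove inequalities \eqref{equ:face_ineq_alcove}, which give $k_\alpha<\ip{x}{\alpha}<k_\alpha+1$, and the region inequalities \eqref{equ:ineq_alcove} to such an $x$, I would split into two cases. If $r_\alpha<m$, then $r_\alpha<\ip{x}{\alpha}<r_\alpha+1$, and since $\ip{x}{\alpha}$ also lies in the unit band $(k_\alpha,k_\alpha+1)$, the two integer floors coincide, forcing $k_\alpha=r_\alpha=\min\{m,k_\alpha\}$. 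If $r_\alpha=m$, then $\ip{x}{\alpha}>m$; combined with $\ip{x}{\alpha}<k_\alpha+1$ this yields $k_\alpha\geq m$, hence $\min\{m,k_\alpha\}=m=r_\alpha$. This settles (i).

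For (ii), the inequality $k_\alpha\geq k_\beta+k_\gamma$ for every decomposition $\alpha=\beta+\gamma$ is immediate from the Shi conditions $k_\alpha=k_\beta+k_\gamma+\delta$ with $\delta\in\{0,1\}$, so $k_\alpha\geq\max\{k_\beta+k_\gamma\}$ holds for every dominant alcove. The substantive content is the reverse inequality for the \emph{minimal} alcove, and here I would argue by contradiction using minimality. Suppose $k_\alpha$ strictly exceeds $\max\{k_\beta+k_\gamma\}$; I would then lower the single coordinate $k_\alpha$ by one and try to show the result is still the Shi tableau of a genuine dominant alcove $\alc'$ lying in the same region $\rR$. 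Staying in $\rR$ is controlled by the comparison criterion of Corollary \ref{cor:same_region}: the region coordinate $\min\{m,\cdot\}$ attached to $\alpha$ is unchanged exactly when $k_\alpha-1\geq m$, and in that regime $\alc'$ would be an alcove of $\rR$ strictly closer to the origin, contradicting minimality. For the remaining roots with $k_\alpha\leq m$ the value is already pinned down by (i) as $k_\alpha=r_\alpha$, so (i) and (ii) jointly reconstruct the whole tableau $T$ from $T_{\rR}$, processing the roots in order of increasing height and setting $k_\alpha=\max\{r_\alpha,\max_{\alpha=\beta+\gamma}\{k_\beta+k_\gamma\}\}$.

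The main obstacle is precisely the wall-crossing step in (ii): lowering $k_\alpha$ alone preserves the Shi conditions of the triples in which $\alpha$ is the \emph{sum}, but it may violate those in which $\alpha$ appears as a \emph{summand} (turning some $\delta$ from $1$ into $2$). To make the argument rigorous I would first have to verify that, for the minimal alcove, whenever $k_\alpha>\max\{k_\beta+k_\gamma\}$ the hyperplane $H_{\alpha,k_\alpha}$ genuinely supports a lower facet, equivalently that every triple having $\alpha$ as a summand satisfies $\delta=0$; alternatively I would bypass this facet bookkeeping by the cleaner constructive route of checking directly that the height-recursion above produces a valid Shi tableau whose region coordinates are the prescribed $r_\alpha$ and which is pointwise minimal among all alcoves of $\rR$, thereby identifying its output with $\alc_{\rR}$. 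I also expect the threshold $k_\alpha=m$ to require separate care, since it is exactly there that the passage ``$\alc'$ stays in $\rR$'' fails and the value is instead governed by (i).
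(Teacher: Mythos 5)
First, a point of reference: the paper does not prove this lemma at all --- it is imported verbatim from Athanasiadis (the citation in the statement), so there is no in-paper argument to measure yours against; what follows assesses your proposal on its own terms. Your part (i) is complete and correct: for $x\in\alc_{\rR}\subseteq\rR$ the value $\ip{x}{\alpha}$ lies in the open unit band $(k_\alpha,k_\alpha+1)$, so when $r_\alpha<m$ the two unit intervals must coincide and $k_\alpha=r_\alpha$, while $r_\alpha=m$ forces $k_\alpha\ge m$. You are also right to be suspicious of the literal reading of (ii): for the region of ${\sf Shi}^3(A_2)$ with $r_{\alpha_1}=1$, $r_{\alpha_2}=0$, $r_{\alpha_1+\alpha_2}=2$, the unique (hence minimal) alcove has $k_{\alpha_1+\alpha_2}=2>1=k_{\alpha_1}+k_{\alpha_2}$, so (ii) can only be meant for the roots not already pinned down by (i), i.e.\ your corrected recursion $k_\alpha=\max\{r_\alpha,\max_{\alpha=\beta+\gamma}(k_\beta+k_\gamma)\}$ is the statement one actually has to prove.

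The genuine gap is exactly the one you flag and do not close: the deduction ``if $k_\alpha>\max(k_\beta+k_\gamma)$, lower $k_\alpha$ by one and obtain an alcove of the same region'' is false as a general implication, so the contradiction with minimality is never reached. Concretely, in $A_3$ the tableau $k_{\alpha_1}=k_{\alpha_2}=k_{\alpha_3}=2$, $k_{\alpha_1+\alpha_2}=k_{\alpha_2+\alpha_3}=5$, $k_{\theta}=8$ (with $\theta$ the highest root) satisfies every Shi condition, and for $m=4$ it violates your criterion at $\alpha_1+\alpha_2$ with $k=5>m$; yet replacing that $5$ by $4$ forces $\delta=2$ in the triple $\theta=(\alpha_1+\alpha_2)+\alpha_3$, so the lowered array is not the tableau of any alcove. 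Hence the perturbation must be organized more carefully --- e.g.\ descending from the highest root, or by first establishing that at a suitably chosen violating root every triple having $\alpha$ as a summand has $\delta=0$ --- and even the ``take $\alpha$ maximal among violating roots'' fix is not automatic, since the Shi condition at $\sigma=\alpha+\tau$ can be tight through a different decomposition of $\sigma$. The standard proof avoids this entirely by characterizing $\alc_{\rR}$ through its separating walls (the route this paper itself takes in the proof of Theorem~\ref{thm:mconditions2}, where only \emph{adjacent} alcoves, which differ in a single coordinate by construction, are compared). Your alternative ``constructive route'' carries the same unpaid debts: that the height recursion outputs an array satisfying \emph{all} Shi conditions, that it is realized by an alcove of $\rR$, and that it is coordinatewise below every alcove of $\rR$. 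Until one of these is supplied, part (ii) is a plan rather than a proof.
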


The following corollary, which is a direct consequence of Lemma
\ref{lem:region_to_alcove}, is used in the proof of Theorem 
\ref{thm:mconditions2}. 
We could alternatively have used arguments based on the 
fact that a region as well as its $m$-minimal alcove have the same 
set of separating walls. We, however, add this appendix  
in order to be able to present explicit examples associating 
regions to their $m$-minimal alcove (see Figure 
\ref{fig:step_by_step} and Appendix \ref{appB}). 
\begin{corollary}
  \label{cor:same_region}
  The dominant alcove $\alc$  with Shi tableau
  $T=\{k_{ij}:1\leq{}i\leq{}j\leq{}n-1\}$ lies in the 	dominant region
  $\rR$  with Shi tableau
  $T_{\rR}=\{r_{ij}=\min\{k_{ij},m\}: \;1\leq{}i\leq{}j\leq{}n-1 \}$.
\end{corollary}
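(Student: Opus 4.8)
The plan is to show directly that every point of $\alc$ satisfies the face-defining inequalities of the region $\rR$, so that $\alc\subseteq\rR$; since a region of $\mshi{\Phi}$ is by definition the common solution set of these inequalities, this simultaneously identifies the region containing $\alc$ and forces its tableau to be $T_{\rR}$. First I would recall from \eqref{equ:face_ineq_alcove} that for every $x\in\alc$ and every positive root $\alpha_{ij}$ one has $k_{ij}<\ip{x}{\alpha_{ij}}<k_{ij}+1$, and from \eqref{equ:ineq_alcove} that the region coordinate $r_{ij}$ encodes the inequalities $r_{ij}<\ip{x}{\alpha_{ij}}<r_{ij}+1$ when $r_{ij}<m$, and $m<\ip{x}{\alpha_{ij}}$ when $r_{ij}=m$.

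Then I would split into two cases according to the size of $k_{ij}$. If $k_{ij}<m$, then $r_{ij}=k_{ij}$ and the required region inequality $r_{ij}<\ip{x}{\alpha_{ij}}<r_{ij}+1$ is literally the alcove inequality, hence holds. If $k_{ij}\geq m$, then $r_{ij}=m$ and the required inequality $m<\ip{x}{\alpha_{ij}}$ follows since $\ip{x}{\alpha_{ij}}>k_{ij}\geq m$. The only delicate point is the boundary value $k_{ij}=m$, where it is precisely the \emph{strictness} of $k_{ij}<\ip{x}{\alpha_{ij}}$ that guarantees $m<\ip{x}{\alpha_{ij}}$; I would flag this as the one place to be careful. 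Since the inequalities hold for every positive root, every $x\in\alc$ lies in $\rR$, so $\alc\subseteq\rR$.

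Finally, I would tie this back to Lemma \ref{lem:region_to_alcove}: part (i) there expresses the region tableau of $\rR$ through the Shi coordinates of its $m$-minimal alcove $\alc_{\rR}$ via $r_{ij}=\min\{m,k_{ij}\}$, and the computation above shows that the region containing an arbitrary dominant alcove depends on its coordinates only through $\min\{m,k_{ij}\}$. Consequently $\alc$ and $\alc_{\rR}$ produce the same truncated tableau $T_{\rR}=\{\min\{m,k_{ij}\}\}$, which is exactly the assertion. I expect no genuine obstacle beyond the boundary bookkeeping at $k_{ij}=m$ and the observation that verifying the inequalities root by root is sufficient, because $\rR$ is the common solution set of all of them. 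This is precisely the form in which the corollary is invoked in the proof of Theorem \ref{thm:mconditions2}, where two dominant alcoves whose Shi coordinates agree after truncation at $m$ are concluded to lie in the same region.
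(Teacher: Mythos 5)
Your argument is correct and is exactly the intended one: the paper treats this corollary as an immediate consequence of the defining inequalities \eqref{equ:face_ineq_alcove} and \eqref{equ:ineq_alcove} together with Lemma \ref{lem:region_to_alcove}, and your case split on $k_{ij}<m$ versus $k_{ij}\geq m$ (with the strictness observation at $k_{ij}=m$) supplies precisely the routine verification the paper omits. No gaps.
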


\section{Another example of \bj}
\label{appB}

In this appendix we present one more example of the map  \bj. More precisely, 
ignoring the first and last step of \bj\ (that biject  each region 
to its $m$-minimal alcove), we consider an $m$-minimal alcove $w\aA_0$ in 
$\aA_+(A_{n-1})$, we compute its  $m$-admissible set $\mgk{m}{w}$
and we evaluate the Shi tableaux  $T_i$ of the $m$-minimal alcoves in 
$\aA_+(C_{n})$ corresponding to $\w{k_i}$, for all $k_i\in\mgk{m}{w}$. Our goal 
is to illustrate how  a copy of the Shi tableau $T$ resides in 
each of the $mn+1$ tableaux $T_i$. 
\smallskip

Let us consider  the  2-minimal alcove $w\aA_0$  $\in\aA_+(A_3)$ 
with $w=[3^{-1},1^0,2^0,4^1]$, whose  Shi tableau $T$ is  \medskip
\begin{center}
\begin{tikzpicture}[scale=0.4]
  \draw[line width= 0.5pt](1,1)rectangle(2,4);  
  \draw[line width=0.5pt](2,2)rectangle(3,4);
  \draw[line width= 0.5pt](3,3)rectangle(4,4);
  \draw[line width= 0.5pt] (1,2)--(2,2);
  \draw[line width= 0.5pt] (1,3)--(3,3);
  \node at(1.5,3.5) {$2$};\node at(2.5,3.5) {$0$};\node at(3.5, 3.5){$0$};
  \node at(1.5,2.5) {$1$};\node at(2.5,2.5) {$0$};
  \node at(1.5,1.5) {$1$};
  \node at(0,3){$T=$}; 
\end{tikzpicture}.
\end{center}
Using Proposition \ref{prop:conditions}, we compute the $2$-admissible set 
$\mgk{w}{2}=\{-8,-4,-2,-1,0,2,3,4,6\}$ of $w$. For each $k_i\in\mgk{w}{2}$ we 
apply the antisymmetric expansion of the $k_i$-shift of $w$ (i.e., the map of 
Proposition \ref{prop:proj alc}), and we obtain  the tableaux listed in Figure 
\ref{big_fig}.  As illustrated in blue, a copy of the tableau $T$ as well as 
one of  its conjugate $T'$, occupy certain rows and columns of each $T_i$.  
Moreover, the values of $k\in\mgk{w}{2}$ have an interesting feature:  as the 
$k_i$'s grow, the tableau $T$ shifts gradually from the top rightmost entries 
of $T_i$ to the bottom ones. So, for example, for the smallest value $k_1$, the 
tableau $T$ occupies the top rightmost entries of $T_1$ (and $T'$ occupies the 
bottom ones), while for the greater value $k_9$, the tableau $T$ occupies the 
bottom entries of $T_9$ (and $T'$ the top rightmost ones). 
{
\begin{figure}[h]  
\begin{tikzpicture}[scale=0.4]
  \begin{scope}[xshift=-30 cm,yshift=0cm]
    \foreach \j in {0,...,6} {\foreach \i in {0,...,\j} { \draw (\i,\j) 
        rectangle (1+\i,\j+1); } }
    \node at(.5, 6.5){6};\node at(1.5,6.5){5};\node at(2.5,6.5){5};
    \node at(3.5,6.5){4};\node at(4.5,6.5){\mk{2}};\node at(5.5,6.5){\mk{0}};
    \node at(6.5,6.5){\mk{0}};
    \node at(.5, 5.5){5};\node at(1.5, 5.5){4};\node at(2.5,5.5){4}; 
    \node at(3.5,5.5){3};\node at(4.5,5.5){\mk{1}};\node at(5.5,5.5){\mk{0}}; 
    \node at(.5,4.5){5};\node at(1.5,4.5){4};\node at(2.5,4.5){4}; 
    \node at(3.5,4.5){3};\node at(4.5,4.5){\mk{1}}; 
    \node at(.5,3.5){4};\node at(1.5,3.5){3};\node at(2.5,3.5){3};
    \node at(3.5,3.5){2}; 
    \node at(.5,2.5){\mk{2}};\node at(1.5,2.5){\mk{1}};\node 
    at(2.5,2.5){\mk{1}};
    \node at(.5,1.5){\mk{0}};\node at(1.5,1.5){\mk{0}}; 
    \node at(.5,.5){\mk{0}}; 
     \node at(-1, 3.5){$T_1=$};
    \node[text width=6cm] at (5.5,-2.5) {\footnotesize
      $k_1=-8 = 0^{-2}$ 
      
      $[3^{-1},1^{0},2^0,4^1]\overset{k_1=-8}{\Rightarrow}
      [3^{-3},{1^{-2}},{2^{-2}},{4^{-1}}]$
      \bigskip
      
      $[\mk{3^{-3}},\mk{1^{-2}},\mk{2^{-2}},\mk{4^{-1}},5^1,7^2,8^2,6^3]$};
  \end{scope}
\begin{scope}[xshift=-17 cm,yshift=0cm]
\foreach \j in {0,...,6} {\foreach \i in {0,...,\j} { \draw (\i,\j) 
    rectangle (1+\i,\j+1); } }
\node at(.5, 6.5){4};\node at(1.5,6.5){3};\node at(2.5,6.5){3};
\node at(3.5,6.5){2};\node at(4.5,6.5){\mk{2}};\node at(5.5,6.5){\mk{0}};
\node at(6.5,6.5){\mk{0}};
\node at(.5, 5.5){3};\node at(1.5, 5.5){2};\node at(2.5,5.5){2}; 
\node at(3.5,5.5){1};\node at(4.5,5.5){\mk{1}};\node at(5.5,5.5){\mk{0}}; 
\node at(.5,4.5){3};\node at(1.5,4.5){2};\node at(2.5,4.5){2}; 
\node at(3.5,4.5){1};\node at(4.5,4.5){\mk{1}}; 
\node at(.5,3.5){2};\node at(1.5,3.5){1};\node at(2.5,3.5){1};
\node at(3.5,3.5){0}; 
\node at(.5,2.5){\mk{2}};\node at(1.5,2.5){\mk{1}};\node 
at(2.5,2.5){\mk{1}};
\node at(.5,1.5){\mk{0}};\node at(1.5,1.5){\mk{0}}; 
\node at(.5,.5){\mk{0}}; 
     \node at(-1, 3.5){$T_2=$};
\node[text width=5cm] at (5.5,-2.5) {\footnotesize
  $k_2=-4=0^{-1}$
  
  $[3^{-1},1^{0},2^0,4^1]\overset{k_2=-4}{\Rightarrow}
  [{3^{-2}},{1^{-1}},{2^{-1}},{4^{0}}]$
  \bigskip
  
  $[\mk{3^{-2}},\mk{1^{-1}},\mk{2^{-1}},\mk{4^{0}},5^0,7^1,8^1,6^2]$};
\end{scope}  
\begin{scope}[xshift=-4 cm,yshift=0cm]
\foreach \j in {0,...,6} {\foreach \i in {0,...,\j} { \draw (\i,\j) 
    rectangle (1+\i,\j+1); } }
\node at(.5, 6.5){2};\node at(1.5,6.5){2};\node at(2.5,6.5){2};
\node at(3.5,6.5){\mk{2}};\node at(4.5,6.5){0};\node at(5.5,6.5){\mk{0}};
\node at(6.5,6.5){\mk{0}};
\node at(.5, 5.5){2};\node at(1.5, 5.5){2};\node at(2.5,5.5){2}; 
\node at(3.5,5.5){\mk{1}};\node at(4.5,5.5){0};\node at(5.5,5.5){\mk{0}}; 
\node at(.5,4.5){2};\node at(1.5,4.5){2};\node at(2.5,4.5){2}; 
\node at(3.5,4.5){\mk{1}};\node at(4.5,4.5){0}; 
\node at(.5,3.5){\mk{2}};\node at(1.5,3.5){\mk{1}};\node at(2.5,3.5){\mk{1}};
\node at(3.5,3.5){1}; 
\node at(.5,2.5){0};\node at(1.5,2.5){0};\node at(2.5,2.5){0};
\node at(.5,1.5){\mk{0}};\node at(1.5,1.5){\mk{0}}; 
\node at(.5,.5){\mk{0}}; 
     \node at(-1, 3.5){$T_3=$};
\node[text width=6cm] at (5.5,-2.5) {\footnotesize
  $k_3=-2=2^{-1}$
  
  $[3^{-1},1^{0},2^0,4^1]\overset{k_3=-2}{\Rightarrow}
  [1^{-1},3^{-1},4^{-1},2^1]$
  \bigskip
  
  $[\mk{1^{-1}},\mk{3^{-1}},\mk{4^{-1}},7^{-1},\mk{2^{1}},5^1,6^1,8^1]$};
\end{scope}
\end{tikzpicture}
\vspace{0.5cm}


\begin{tikzpicture}[scale=0.4]
\begin{scope}[xshift=-30 cm,yshift=0cm]
\foreach \j in {0,...,6} {\foreach \i in {0,...,\j} { \draw (\i,\j) 
    rectangle (1+\i,\j+1); } }
\node at(.5, 6.5){2};\node at(1.5,6.5){2};\node at(2.5,6.5){\mk{2}};
\node at(3.5,6.5){1};\node at(4.5,6.5){\mk{0}};\node at(5.5,6.5){\mk{0}};
\node at(6.5,6.5){0};
\node at(.5, 5.5){2};\node at(1.5, 5.5){2};\node at(2.5,5.5){\mk{1}}; 
\node at(3.5,5.5){1};\node at(4.5,5.5){\mk{0}};\node at(5.5,5.5){0}; 
\node at(.5,4.5){\mk{2}};\node at(1.5,4.5){\mk{1}};\node at(2.5,4.5){\mk{1}}; 
\node at(3.5,4.5){1};\node at(4.5,4.5){0}; 
\node at(.5,3.5){1};\node at(1.5,3.5){1};\node at(2.5,3.5){1};
\node at(3.5,3.5){0}; 
\node at(.5,2.5){\mk{0}};\node at(1.5,2.5){\mk{0}};\node 
at(2.5,2.5){0};
\node at(.5,1.5){\mk{0}};\node at(1.5,1.5){0}; 
\node at(.5,.5){0}; 
     \node at(-1, 3.5){$T_4=$};
\node[text width=6cm] at (5.5,-2.5) {\footnotesize
  $k_4=-1=3^{-1}$
  
  $[3^{-1},1^{0},2^0,4^1]\overset{k_4=-1}{\Rightarrow}
  [2^{-1},4^{-1},1^0,3^1]$
  \bigskip
  
  $[\mk{2^{-1}},\mk{4^{-1}},6^{-1},\mk{1^0},8^0,3^1,5^1,7^1]$};
\end{scope}
\begin{scope}[xshift=-17 cm,yshift=0cm]
\foreach \j in {0,...,6} {\foreach \i in {0,...,\j} { \draw (\i,\j) 
    rectangle (1+\i,\j+1); } }
\node at(.5, 6.5){2};\node at(1.5,6.5){\mk{2}};\node at(2.5,6.5){1};
\node at(3.5,6.5){1};\node at(4.5,6.5){\mk{0}};\node at(5.5,6.5){\mk{0}};
\node at(6.5,6.5){0};
\node at(.5, 5.5){\mk{2}};\node at(1.5, 5.5){\mk{1}};\node at(2.5,5.5){\mk{1}}; 
\node at(3.5,5.5){1};\node at(4.5,5.5){\mk{0}};\node at(5.5,5.5){0}; 
\node at(.5,4.5){1};\node at(1.5,4.5){\mk{1}};\node at(2.5,4.5){0}; 
\node at(3.5,4.5){0};\node at(4.5,4.5){0}; 
\node at(.5,3.5){1};\node at(1.5,3.5){1};\node at(2.5,3.5){0};
\node at(3.5,3.5){0}; 
\node at(.5,2.5){\mk{0}};\node at(1.5,2.5){\mk{0}};\node 
at(2.5,2.5){0};
\node at(.5,1.5){\mk{0}};\node at(1.5,1.5){0}; 
\node at(.5,.5){0}; 
     \node at(-1, 3.5){$T_5=$};
\node[text width=5cm] at (5.5,-2.5) {\footnotesize
  $k_5=0$
  
  $[3^{-1},1^{0},2^0,4^1]\overset{k_5=0}{\Rightarrow}
  [3^{-1},1^{0},2^0,4^1]$
  \bigskip
  
  $[\mk{3^{-1}},5^{-1},\mk{1^{0}},\mk{2^{0}},7^0,8^0,\mk{4^1},6^1]$};
\end{scope}  
\begin{scope}[xshift=-4 cm,yshift=0cm]
\foreach \j in {0,...,6} {\foreach \i in {0,...,\j} { \draw (\i,\j) 
    rectangle (1+\i,\j+1); } }
\node at(.5, 6.5){3};\node at(1.5,6.5){\mk{2}};\node at(2.5,6.5){\mk{1}};
\node at(3.5,6.5){\mk{1}};\node at(4.5,6.5){1};\node at(5.5,6.5){1};
\node at(6.5,6.5){1};
\node at(.5, 5.5){\mk{2}};\node at(1.5, 5.5){0};\node at(2.5,5.5){0}; 
\node at(3.5,5.5){0};\node at(4.5,5.5){\mk{0}};\node at(5.5,5.5){\mk{0}}; 
\node at(.5,4.5){\mk{1}};\node at(1.5,4.5){0};\node at(2.5,4.5){0}; 
\node at(3.5,4.5){0};\node at(4.5,4.5){\mk{0}}; 
\node at(.5,3.5){\mk{1}};\node at(1.5,3.5){0};\node at(2.5,3.5){0};
\node at(3.5,3.5){0}; 
\node at(.5,2.5){1};\node at(1.5,2.5){\mk{0}};\node at(2.5,2.5){\mk{0}};
\node at(.5,1.5){1};\node at(1.5,1.5){\mk{0}}; 
\node at(.5,.5){1}; 
     \node at(-1, 3.5){$T_6=$};
\node[text width=6cm] at (5.5,-2.5) {\footnotesize
  $k_6=2=2^0$
  
  $[3^{-1},1^{0},2^0,4^1]\overset{k_6=2}{\Rightarrow}
  [1^0,3^0,4^0,2^2]$
  \bigskip
  
  $[6^{-2},\mk{1^0},\mk{3^0},\mk{4^0},5^0,7^0,8^0,\mk{2^2}]$};
\end{scope}
\end{tikzpicture}
\vspace{0.5cm}


\begin{tikzpicture}[scale=0.4]
\begin{scope}[xshift=-30 cm,yshift=0cm]
\foreach \j in {0,...,6} {\foreach \i in {0,...,\j} { \draw (\i,\j) 
    rectangle (1+\i,\j+1); } }
\node at(.5, 6.5){3};\node at(1.5,6.5){2};\node at(2.5,6.5){\mk{2}};
\node at(3.5,6.5){\mk{1}};\node at(4.5,6.5){1};\node at(5.5,6.5){1};
\node at(6.5,6.5){\mk{1}};
\node at(.5, 5.5){2};\node at(1.5, 5.5){1};\node at(2.5,5.5){\mk{0}}; 
\node at(3.5,5.5){\mk{0}};\node at(4.5,5.5){0};\node at(5.5,5.5){0}; 
\node at(.5,4.5){\mk{2}};\node at(1.5,4.5){\mk{0}};\node at(2.5,4.5){0}; 
\node at(3.5,4.5){0};\node at(4.5,4.5){\mk{0}}; 
\node at(.5,3.5){\mk{1}};\node at(1.5,3.5){\mk{0}};\node at(2.5,3.5){0};
\node at(3.5,3.5){0}; 
\node at(.5,2.5){1};\node at(1.5,2.5){0};\node at(2.5,2.5){\mk{0}};
\node at(.5,1.5){1};\node at(1.5,1.5){0}; 
\node at(.5,.5){\mk{1}}; 
     \node at(-1, 3.5){$T_7=$};
\node[text width=6cm] at (5.5,-2.5) {\footnotesize
  $k_7=3=3^0$
  
  $[3^{-1},1^{0},2^0,4^1]\overset{k_7=3}{\Rightarrow}
  [2^0,4^0,1^1,3^2]$
  \bigskip
  
  $[7^{-2},5^{-1},\mk{2^0},\mk{4^0},6^0,8^0,\mk{1^1},\mk{3^2}]$};
\end{scope}
\begin{scope}[xshift=-17 cm,yshift=0cm]
\foreach \j in {0,...,6} {\foreach \i in {0,...,\j} { \draw (\i,\j) 
    rectangle (1+\i,\j+1); } }
\node at(.5, 6.5){3};\node at(1.5,6.5){2};\node at(2.5,6.5){2};
\node at(3.5,6.5){\mk{2}};\node at(4.5,6.5){1};\node at(5.5,6.5){\mk{1}};
\node at(6.5,6.5){\mk{1}};
\node at(.5, 5.5){2};\node at(1.5, 5.5){1};\node at(2.5,5.5){1}; 
\node at(3.5,5.5){\mk{0}};\node at(4.5,5.5){0};\node at(5.5,5.5){\mk{0}}; 
\node at(.5,4.5){2};\node at(1.5,4.5){1};\node at(2.5,4.5){1}; 
\node at(3.5,4.5){\mk{0}};\node at(4.5,4.5){0}; 
\node at(.5,3.5){\mk{2}};\node at(1.5,3.5){\mk{0}};\node at(2.5,3.5){\mk{0}};
\node at(3.5,3.5){0}; 
\node at(.5,2.5){1};\node at(1.5,2.5){0};\node at(2.5,2.5){0};
\node at(.5,1.5){\mk{1}};\node at(1.5,1.5){\mk{0}}; 
\node at(.5,.5){\mk{1}}; 
     \node at(-1, 3.5){$T_8=$};
\node[text width=5cm] at (5.5,-2.5) {\footnotesize
  $k_8=4=0^1$
  
  $[3^{-1},1^{0},2^0,4^1]\overset{k_8=4}{\Rightarrow}
  [3^0,1^1,2^1,4^2]$
  \bigskip
  
  $[8^{-2},5^{-1},6^{-1},\mk{3^0},7^0,\mk{1^1},\mk{2^1},\mk{4^2}]$};
\end{scope}  
\begin{scope}[xshift=-4 cm,yshift=0cm]
\foreach \j in {0,...,6} {\foreach \i in {0,...,\j} { \draw (\i,\j) 
    rectangle (1+\i,\j+1); } }
\node at(.5, 6.5){5};\node at(1.5,6.5){3};\node at(2.5,6.5){3};
\node at(3.5,6.5){3};\node at(4.5,6.5){\mk{2}};\node at(5.5,6.5){\mk{1}};
\node at(6.5,6.5){\mk{1}};
\node at(.5, 5.5){1};\node at(1.5, 5.5){1};\node at(2.5,5.5){1}; 
\node at(3.5,5.5){1};\node at(4.5,5.5){\mk{0}};\node at(5.5,5.5){\mk{0}}; 
\node at(.5,4.5){3};\node at(1.5,4.5){1};\node at(2.5,4.5){1}; 
\node at(3.5,4.5){1};\node at(4.5,4.5){\mk{0}}; 
\node at(.5,3.5){3};\node at(1.5,3.5){1};\node at(2.5,3.5){1};
\node at(3.5,3.5){1}; 
\node at(.5,2.5){\mk{2}};\node at(1.5,2.5){\mk{0}};\node at(2.5,2.5){\mk{0}};
\node at(.5,1.5){\mk{1}};\node at(1.5,1.5){\mk{0}}; 
\node at(.5,.5){\mk{1}}; 
     \node at(-1, 3.5){$T_9=$};
\node[text width=6cm] at (5.5,-2.5) {\footnotesize
  $k_9=6=2^1$
  
  $[3^{-1},1^{0},2^0,4^1]\overset{k_9=6}{\Rightarrow}
  [1^{1},3^{1},4^{1},2^3]$
  \bigskip
  
  $[6^{-3},5^{-1},7^{-1},8^{-1},\mk{1^1},\mk{3^1},\mk{4^1},\mk{2^3}]$};
\end{scope}
\end{tikzpicture}
\caption{}
\label{big_fig}
\end{figure}
}
The tableaux computed above correspond to $2$-minimal alcoves of different   
regions in $\rR^2_+(C_4)$. Indeed, in view of Corollary \ref{cor:same_region}, 
if, in each tableau, we replace every entry $k_{ij}$ by $\min\{k_{ij},2\}$, 
we have no coincidences among the resulting tableaux, which verifies that 
the regions they represent are different.

\end{document}